\theoremstyle{plain} \numberwithin{equation}{subsection}
\newtheorem{theorem}{Theorem}[section]
\newtheorem{corollary}[theorem]{Corollary}
\newtheorem{lemma}[theorem]{Lemma}
\newtheorem{proposition}[theorem]{Proposition}
\newtheorem{proposition/definition}[theorem]{Proposition/Definition}
\theoremstyle{definition}
\newtheorem{remark}[theorem]{Remark}
\newtheorem{definition}[theorem]{Definition}
\newtheorem{notation}[theorem]{Notation}
\def\eb#1#2{{\mathcal{L}_{#2} (#1)}} 
\def\eB#1#2{{\mathcal{L}_{#2}\big(#1\big)}} 
\def\EB#1#2{{\mathcal{L}_{#2}\Big(#1\Big)}} 
\def\fcohsymdegs#1#2#3#4{{H^{#3} \Big(G/B, \: \mathcal{L}_B\big(#1\big) \otimes \mathcal{L}_B\big(\symmpg{#2}{#4} \big) \Big)}} 
\def\pc#1#2#3{{  H^{#1, \nil{#3}} (#2) }} 
\def\fc#1#2{{H^{#1, \mf{n}} (#2)}} 
\def\fcot{{T^*(G/B)}} 
\def\pcot#1{{T^*(G/#1)}} 
\def\cohom#1#2#3{{H^{#1}  ( #2, \, #3 )}} 
\def\Cohom#1#2#3{{H^{#1} \big( #2, \: #3 \big)}} 
\def\COhom#1#2#3{{H^{#1} \Big( #2, \: #3 \Big)}} 
\def\fcotcoh#1{{H^i \big(\fcot, \: p_B^* \, \mathcal{L}_B(#1) \big)}} 
\def\pcotcoh#1#2{{H^i \big(\pcot{#2}, \: p_{#2}^* \, \mathcal{L}_{#2}(#1) \big)}} 
\def\nil#1{{\mf{n}_{\mf{#1}}}} 
\def\symm{{S(\mf{n}^*)}} 
\def\symmp#1{{S( \mf{n_{#1}^*})}} 
\def\symmpg#1#2{{S^{#2}(\nil#1^*)}} 
\def\nc{{\mathcal{N}}} 
\def\mf#1{{\mathfrak{#1}}} 
\def\mc#1{{   \mathcal{#1}   }}
\def\mb#1{{   \mathbb{#1}   }}
\def\chara#1{{\mathbb{C}_{#1}}} 
\def\irrep#1#2{{V^{#2}(#1)}} 
\def\irrepd#1#2{{V^{#2}(#1)^*}} 
\def\irrepg#1{{V(#1)}}
\def\irrepgd#1{{V(#1)^*}}
\def\irrepgw#1#2{{    V_{#1}(#2)     }}
\def\Lhi#1#2#3{{   \mc{W}_{#1}^{#3}(#2)   }}
\def\irrepfilt#1#2#3#4{{     \mathcal{H}^{#4}  \big(   \Lhi{#1}{#2}{#3}   \big)   }}
\def\irrepbrygen#1#2#3#4#5{{     \mathcal{F}^{#4}_{#5}  \big(   \Lhi{#1}{#2}{#3}  \big)   }}
\def\bry#1#2#3{{   \mc{F}^{#3} \big(   \irrepgw{#1}{#2}   \big)   }}
\def\res#1{{ \substack{\phantom{1} \\ \big|}_{#1}}} 
\def\struct#1{{\mathcal{O}_{#1}}} 
\def\C{{\mathbb{C}}} 
\def\pcotpp#1{{\EB{\symmp{p} \otimes \irrepd{#1}{P}}{P}}} 
\def\glpp#1{{\eB{\C[H + \nil{p}] \otimes \irrepd{#1}{P}}{P}}} 
\def\pcotppgr#1#2{{\EB{S^{#2}(\nil{p}^*) \otimes \irrepd{#1}{P}}{P}}} 
\def\shF#1#2{{\mc{L}_{#2}(#1)   }}  
\def\shL#1#2{{\mc{A}_{#2}(#1)    }} 
\def\shLpush#1#2{{  q_*\, \shL{#1}{#2}  }} 
\def\shLfilt#1#2#3{{ \big(\shLpush{#1}{#2}\big)^{\leq#3}    }} 
\def\shcotpush#1#2{{  p_* \, p^* \shF{#1}{#2} }} 
\def\shcotgr#1#2#3{{ \big( \shcotpush{#1}{#2} \big)^{#3}  }} 
\def\cohomL#1#2{{   \cohom{#1}{G/P}{   \shLpush{#2}{P}   }    }} 
\def\cohomLfilt#1#2#3{{      \COhom{#1}{G/P}{   \shLfilt{#2}{P}{#3}  }    }}  
\def\aff{{  H + \nil{p}  }}
\def\regaff{{  \C[\aff]  }}
\def\regafffilt#1{{  \regaff^{\leq #1}  }}
\def\congsp{{  \: \: \cong \: \: }} 
\def\isom{{\ \tilde{\longrightarrow} \ }} 
\def\shortexact#1#2#3{{0 \: \rightarrow \: #1 \: \rightarrow \: #2 \: \rightarrow \: #3 \: \rightarrow \: 0}} 
\def\cht#1{{\textrm{cht} \left(#1\right)}} 
\def\pair#1#2{{   #1(#2^\vee)   }} 
\def\pairp#1#2{{         \left(#1\right)\left(#2^\vee\right)         }} 
\def\normsq#1{{    \left| #1 \right|^2    }}
\def\inner#1#2{{    \left(#1 \,, #2 \right)    }}
\def\rootlat{{    \Lambda_R    }}
\def\orb#1{{   \mf{O}_{#1}   }} 
\def\part{{   \underline{\textbf{p}}   }} 
\def\pkos{{   p^P_q   }} 
\def\pkoscoeff#1{{   p^{\, n, P}_q   }} 
\def\qana#1#2{{   m_{#1}^{P, #2} (q)   }} 
\def\qanacoeff#1#2#3{{ m_{#1}^{#3, P, #2}   }}
\def\chichar#1{{   \chi \big(\textrm{ch} \, #1 \big)   }}
\def\algclosed#1{{   \bar{  \mb{F}  }_{#1}   }}
\def\r#1{{   r_{\alpha_{#1}}   }}
\begin{document}

\bibliographystyle{amsplain}

\title{Cohomology of Flag Varieties and the Brylinski-Kostant Filtration}
\author{Chuck Hague}
\begin{abstract} Let $G$ be a reductive algebraic group over $\mb C$ and let $N$ be a $G$-module. For any subspace $M$ of $N$, the Brylinski-Kostant filtration on $M$ is defined through the action of a principal nilpotent element in Lie$\, G$. This filtration is related to a $q$-analog of weight multiplicity due to Lusztig. We generalize this filtration to other nilpotent elements and show that this generalized filtration is related to "parabolic" versions of Lusztig's $q$-analog of weight multiplicity. Along the way we also generalize results of Broer on cohomology vanishing of bundles on cotangent bundles of partial flag varieties. We conclude by computing some explicit examples.
\end{abstract}
\maketitle

\section{Introduction}


Let $G$ be a semisimple algebraic group over $\C$. Given an irreducible representation $V$ of $G$, Brylinski \cite{Bry89} constructed a filtration on weight spaces of $V$. This filtration, often referred to as the Brylinski-Kostant filtration, studies the action on $V$ of certain special nilpotent elements of the Lie algebra of $G$ (the \textbf{principal} nilpotents). This filtration was motivated by fundamental work of Kostant \cite{K59}, \cite{K63} on adjoint orbits and actions of $sl_2$-triples. Using higher cohomology vanishing of pullback bundles on the cotangent bundle of the flag variety $G/B$, Brylinski showed that certain polynomials -- Lusztig's $q$-analogs of weight multiplicity -- compute the weight spaces occuring in the various degrees of this filtration. These polynomials, as proven in \cite{Ka82}, are \textbf{Kazhdan-Lusztig polynomials}, which are deep objects in combinatorial representation theory (see \cite{KL79b}, \cite{KL79a}, and \cite{L83} for a starting point in the large amount of literature on this subject).

We now consider a larger class of nilpotent elements, the \textbf{even} nilpotents. Given an even nilpotent $X$ we define a generalized Brylinski-Kostant filtration corresponding to $X$. If $X$ is principal we obtain the original Brylinski-Kostant filtration. As in the original filtration, certain polynomials --  in this case, parabolic versions of Lusztig's $q$-analogs -- arise in computing dimensions of the generalized filtration. In particular, these polynomials arise in computing multiplicities of irreducible $G$-modules in the global sections of certain equivariant bundles on cotangent bundles of partial flag varieties; we then relate these multiplicities to the generalized Brylinski-Kostant filtration.

Our results rely on cohomology vanishing results for equivariant bundles on cotangent bundles of flag varieties. In section \ref{sec:cohom} we obtain extensions of some results of Broer \cite{Bro94}. These vanishing results provide a basic technical tool in analyzing the generalized Brylinski-Kostant filtration but are interesting in their own right; another application of these cohomology vanishing results is in the geometry of $G$-orbits of nilpotent elements in Lie($G$). These orbits and their closures are subvarieties of Lie($G$) with rich geometric structure, and these cohomology vanishing results are a vital tool in studying their structures, cf \cite{Bro94}, \cite{H79}, \cite{S03}, and \cite{S05}. 

In section \ref{sec:examples} we conclude by explicitly giving examples of generalized Brylinski-Kostant filtrations on various irreducible representations.

\bigskip

\textbf{Acknowledgements.} I would like to thank Shrawan Kumar and George McNinch for many useful conversations, and the referee for a number of helpful comments, including a new proof of Theorem \ref{th:vanishing} that strengthens the result that had been there previously.

\section{Notation} \label{sec:notation} Let $G$ denote a complex semisimple simply-connected algebraic group over $\C$. Fix a Borel $B$ and maximal torus $T \subseteq B$ of $G$ (we assume $G$ simply-connected only for notational convenience; all results will generalize easily to $G$ of arbitrary isogeny type). Let $U$ denote the unipotent radical of $B$. Set $\mf{g} := \textrm{Lie } G$, $\mf{h} := \textrm{Lie } T$, and $\mf{n} := \textrm{Lie } U$. Let $W$ be the Weyl group of $G$. Let $\Lambda \subseteq \mf{h}^*$ denote the weight lattice of $G$. Let $\Lambda_R \subseteq \Lambda$ denote the root lattice. Let $\Delta$ (resp. $\Delta^-$, $\Delta^+$), denote the roots (resp. positive and negative roots) with respect to $T$ and $B$ and let $\pi \subseteq \Delta^+$ denote the simple roots. Let $\rho$ be the half sum of all elements of $\Delta^+$. There is a shifted action of $W$ on $\mf{h}^*$ defined by $w*\lambda := w(\lambda + \rho) - \rho$. This action keeps $\Lambda$ and $\Lambda_R$ stable. 


For each $\beta \in \Delta$ let $\beta^\vee$ denote the coroot corresponding to $\beta$. Set $$D := \{     \lambda \in \Lambda : \pair{\lambda}{\alpha} \geq 0 \textrm{ for all } \alpha \in \pi  \},$$ the collection of \textbf{dominant weights} in $\Lambda$. Also set $$D' := \{    H \in \mf{h} : \alpha(H) \in \mb{R} \textrm{ and } \alpha(H) \geq 0 \textrm{ for all } \alpha \in \pi  \},$$ the \textbf{dominant chamber} in $H$. For any $\mu \in D$ let $\irrepg{\mu}$ denote the irreducible representation of $G$ with highest weight $\mu$.

For any $\alpha \in \pi$ let $\chi_\alpha$ denote the fundamental weight corresponding to $\alpha$. That is, $\pair{\chi_\alpha}{\alpha} = 1$ and $\pair{\chi_\alpha}{\beta} = 0$ for all $\beta \in \pi \setminus \{  \alpha  \}$. We say that a weight $\mu \in \Lambda$ is \textbf{regular} if $\pair{\mu}{\beta} \neq 0$ for all $\beta \in \Delta^+$. 




For any $T$-module $M$ and any weight $\mu$ of $M$, let $M_\mu$ denote the $\mu$-weight subspace of $M$. Set $$\irrepgw{\lambda}{\mu} := \irrepg{\mu}_\lambda \, .$$ 

Let $P$ be a standard parabolic in $G$ and let $V$ be a variety with a $P$-action. Let $G \times^P V$ denote the $G$-equivariant fiber bundle on $G/P$ with fiber $V$.  Let $$f : G \times V \, \twoheadrightarrow \, G \times^P V$$ be the quotient map. For $g \in G$ and $x \in V$ we set $$g * x := f(g, x) \, .$$ Then $gp*x = g*px$, for all $g \in G$, $p \in P$, and $x \in V$. For a $P$-module $M$, $G \times^P M$ is a $G$-equivariant vector bundle on $G/P$. Let $\eb{M}{P}$ denote the sheaf of sections of this bundle.

Let $L$ denote the Levi factor of $P$ containing $T$. Given $\lambda \in \Lambda$, we say that $\lambda$ is \textbf{$P$-dominant} (resp. $P$-\textbf{regular dominant}) if $\pair{\lambda}{\beta} \geq 0$ (resp. $\pair{\lambda}{\beta} > 0$) for all roots $\beta$ of $L$. Note that for $P = B$ these conditions are vacuous. For $P$-dominant $\lambda$ let $\irrep{ \lambda }{ P }$ denote the irreducible $P$-module of highest weight $\lambda$, and set $$\eb{\lambda}{P} := \eB{ \irrepd{\lambda}{P} }{ P } \, \cong \, \eB{ \irrep{\lambda}{P} }{ P }^\vee \, .$$ Note that $\eb{ \lambda }{B}$ is a line bundle on $G/B$ for all $\lambda \in \Lambda$.

\section{The BK-filtration and generalizations} \label{sec:bk}

\subsection{Definitions and background}

\subsubsection{Definitions}

\begin{definition}
A triple $\{   X, Y, H   \} \subseteq \mf{g}$ is called an \textbf{$sl_2$-triple} if there is a Lie algebra isomorphism $sl_2 \, \isom \, \textrm{span}\{   X, Y, H   \}$ such that the standard basis $\{   X', Y', H'   \}$ of $sl_2$ maps to $\{   X, Y, H   \}$. See Section \ref{subsubsec:nilpotent orbits} below for more information on $sl_2$-triples.

Let $\nc$ denote the nullcone (the variety of all nilpotent elements) of $\mf{g}$. We say that a nilpotent element $X \in \nc$ is in \textbf{good position} if (1) $X \in \mf{n}$ and (2) there is an $sl_2$-triple $\{  X, Y, H  \} \subseteq \mf{g}$ with $H \in D'$. There is a unique dense open $G$-orbit in $\nc$; elements of this orbit are called \textbf{principal} nilpotents.
\end{definition}

\begin{remark} Let $\{ X_\beta \}_{\beta \in \Delta^+}$ be a Chevalley basis of $\mf{n}$. Then the following are equivalent (cf \cite{Ja04}, Proposition 4.14): 

\begin{enumerate} \item $X \in \nc$ is a principal nilpotent in good position.
\item The $B$-orbit of $X$ is dense in $\mf{n}$. 
\item We have $$X = \sum_{\beta \in \Delta^+} c_\beta \, X_\beta \, ,$$ where the $c_\beta$ are constants such that $c_\alpha \neq 0$ for all $\alpha \in \pi$.
\end{enumerate}
\end{remark}

\begin{definition} The \textbf{BGG category} $\mc{O}$ consists of $\mf{g}$ representations V that have finite-dimensional weight spaces and are \textbf{$\mf{n}$-locally finite}; i.e., for any $v \in V$, $v$ lies in a finite-dimensional $\mf{n}$-submodule of $V$.
\end{definition}

Let $V$ be an object in the BGG category $\mc{O}$ and let $U \subseteq V$ be any vector subspace. Let $e$ be a principal nilpotent in good position. In \cite{Bry89}, Ranee Brylinski defined a filtration on $U$ inside of $V$, called the \textbf{Brylinski-Kostant filtration} (or \textbf{BK-filtration}), as follows: set $$\mc{F}^n_V(U) := \{v \in U : e^{n+1} v = 0 \} \, .$$

In particular, Brylinski considered this filtration in the case where $V$ is an irreducible representation $\irrepg{\mu}$ of $G$ and $U$ is a weight subspace $\irrepgw{\lambda}{\mu}$ of $V$, for $\mu \in D$ and $\lambda \in \Lambda$. In this case we shall suppress the subscript and write \begin{equation*} \label{eq:BK} \bry{\lambda}{\mu}{n} := \mc{F}^n_{\irrepg{\mu}} \big(   \irrepgw{\lambda}{\mu}   \big) \, . \end{equation*}

Define $$r_\mu^\lambda (q) := \sum_{n \geq 0} \mathrm{dim} \left( \frac{    \bry{\lambda}{\mu}{n}    }{    \bry{\lambda}{\mu}{n-1}    } \right) q^n \in \mb{Z}[q]$$ (where $   \bry{\lambda}{\mu}{-1}    = \{0\}$). This \textbf{jump polynomial} counts the dimensions of the degrees of the filtration, and clearly $\textrm{dim} \, (\irrepgw{\lambda}{\mu}) = r_\mu^\lambda (1)$.

\subsubsection{Kazhdan-Lusztig polynomials and Brylinski's theorem}

For $\gamma \in \Lambda_R$ let $p_q(\gamma) \in \mb{Z}[q]$ be the coefficient of $e^\gamma$ in $\displaystyle \prod_{\beta \in \Delta^+} (1 - qe^{\beta})^{-1}$. This is a q-analog of Kostant's partition function $p(\gamma) = p_q(\gamma) \res{q=1} \, $, which counts the number of ways of writing $\gamma$ as a sum of positive roots. The degree-$n$ coefficient of $p_q(\gamma)$ counts the number of ways of writing $\gamma$ as a sum of precisely $n$ (not necessarily distinct) positive roots.

For $\mu$, $\lambda \in D$ such that $\lambda - \mu \in \Lambda_R$ set $$\displaystyle m^{\lambda}_{\mu}(q) := \sum_{w \in W} (-1)^{l(w)} p_q(w*\mu - \lambda) \, ,$$ Lusztig's \textbf{q-analog of weight multiplicity} \cite{L83}. It is called an analog of weight multiplicity because, by the Weyl character formula, $m_\mu^\lambda(1) = \textrm{dim} \, (\irrepgw{\lambda}{\mu})$. The polynomials $m_{\mu}^{\lambda}(q)$ for $\lambda$, $\mu \in D$ are equal to certain \textbf{Kazhdan-Lusztig polynomials} for affine Weyl groups, which are important objects in combinatorial representation theory (see \cite{Ka82}, \cite{KL79b}, \cite{KL79a}, and \cite{L83}).


Using this setup, we have the following theorem of Brylinski which relies on a cohomology vanishing condition. Let $p_B: \fcot \rightarrow G/B$ be the cotangent bundle of $G/B$.

\begin{theorem} \label{th:bry} (Brylinski, \cite{Bry89}) Let $e$ be a principal nilpotent in good position. Let $\mu$, $\lambda \in D$. If $$\fcotcoh{\lambda} = 0$$ for all $i > 0$ then $m_{\mu}^{\lambda}(q) = r_\mu^\lambda (q)$. 
\end{theorem}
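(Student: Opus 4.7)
The plan is to break the proof into three linked steps: first translate the hypothesis on $T^*(G/B)$ into coherent cohomology on $G/B$; then use a Weyl-character / Kostant-type computation to identify the graded $G$-multiplicity of $V(\mu)$ with $m_\mu^\lambda(q)$; and finally identify that same graded multiplicity with $r_\mu^\lambda(q)$ via the action of $e$ on weight spaces.

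First I would use the identification $T^*(G/B) \cong G \times^B \mathfrak{n}^*$, under which $p_B$ is an affine morphism with $(p_B)_* \mathcal{O}_{T^*(G/B)} \cong \mathcal{L}_B(S(\mathfrak{n}^*))$, graded by polynomial degree on the cotangent fibre. Since higher direct images of an affine morphism vanish, the projection formula gives
$$H^i\big(T^*(G/B),\, p_B^* \mathcal{L}_B(\lambda)\big) \;\cong\; \bigoplus_{n \ge 0} H^i\big(G/B,\, \mathcal{L}_B(\lambda) \otimes \mathcal{L}_B(S^n(\mathfrak{n}^*))\big).$$
The hypothesis then forces the left side to vanish for $i > 0$, leaving only $H^0$ in each degree $n$.

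Second, I would compute the $q$-weighted virtual $T$-character $\sum_n q^n \sum_i (-1)^i \mathrm{ch}\, H^i\big(G/B, \mathcal{L}_B(\lambda) \otimes \mathcal{L}_B(S^n(\mathfrak{n}^*))\big)$. Applying Borel-Weil-Bott and using that $S(\mathfrak{n}^*)$ has graded $T$-character $\prod_{\beta \in \Delta^+}(1 - qe^{-\beta})^{-1}$, a standard Atiyah-Bott / Weyl-character manipulation shows that the graded multiplicity of $V(\mu)$ in this alternating sum equals $\sum_{w \in W}(-1)^{l(w)} p_q(w*\mu - \lambda) = m_\mu^\lambda(q)$. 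Combined with Step 1, this identifies $m_\mu^\lambda(q)$ with the honest graded multiplicity of $V(\mu)$ in $\bigoplus_n H^0(G/B, \mathcal{L}_B(\lambda) \otimes \mathcal{L}_B(S^n(\mathfrak{n}^*)))$.

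Finally, I would identify that graded multiplicity with $r_\mu^\lambda(q)$. The principal nilpotent $e$ sits in the cotangent fibre $\mathfrak{n}^*$ (via the Killing-form identification $\mathfrak{n}^* \cong \mathfrak{n}$) above $B \in G/B$, and the good-position hypothesis makes $(1, e) \in T^*(G/B)$ compatible with the $T$-equivariant structure in a useful way. Evaluating global sections at $(1,e)$, composed with projection onto the $V(\mu)$-isotypic summand, should identify the degree-$n$ graded multiplicity space with the jump $\bry{\lambda}{\mu}{n}/\bry{\lambda}{\mu}{n-1}$, because multiplication by the degree-one element of $S(\mathfrak{n}^*)$ dual to $e$ corresponds under this projection to the action of $e$ on $V(\mu)_\lambda$. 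The hard part is making this last correspondence precise: carefully tracking how the geometric grading by $S^n$-degree matches the algebraic BK filtration via the $sl_2$-triple through $e$, which is exactly where the principal and good-position hypotheses are needed.
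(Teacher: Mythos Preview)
Your first two steps are correct and coincide with the paper's approach (Lemma~\ref{lem:cotsheaf} and Proposition~\ref{lem:charcomp} at $P=B$): under the vanishing hypothesis, the degree-$n$ multiplicity of $V(\mu)^*$ in $H^0$ is the $n$th coefficient of $m_\mu^\lambda(q)$.

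Step 3 is where there is a genuine gap. Evaluation of sections at the single point $1*e\in T^*(G/B)$ does not by itself yield a filtration on $V_\lambda(\mu)$: Frobenius reciprocity on $G\times^B\mathfrak{n}$ only produces the graded space $\mathrm{Hom}_B\big(V(\mu)^*,\,S(\mathfrak{n}^*)\otimes\mathbb{C}_{-\lambda}\big)$, which is not naturally $V_\lambda(\mu)$, so there is nothing on which to see the BK filtration. The paper (following Brylinski, as in the proof of Theorem~\ref{th:brygen}) supplies the missing piece by introducing a second, \emph{filtered} bundle: replace the fibre $\mathfrak{n}$ by the affine translate $H+\mathfrak{n}$, where $H$ is the rescaled semisimple element of an $sl_2$-triple through $e$ satisfying $[H,e]=e$. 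Since $B$ acts transitively on $H+\mathfrak{n}$ with stabilizer $T$, one has $G\times^B(H+\mathfrak{n})\cong G/T$, and now Frobenius reciprocity gives $\mathrm{Hom}_G\big(V(\mu)^*,\,H^0(G/T,q^*\mathcal{L}_B(\lambda))\big)\cong V_\lambda(\mu)$ on the nose (Remark~\ref{rem:frobrecip}). The degree filtration on $\mathbb{C}[H+\mathfrak{n}]$ induces a filtration on $V_\lambda(\mu)$, and the core of the argument (Theorem~\ref{th:brygenfilt}) is to show that it equals the BK filtration by restricting sections to the line $t\mapsto H+te$ and using $\exp(-te)\cdot H=H+te$. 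Only then does the vanishing hypothesis enter, to identify the associated graded of the filtered $H^0$ on $G/T$ with the graded $H^0$ on $T^*(G/B)$ via Proposition~\ref{pr:grL}. Your evaluation-at-$e$ idea gestures toward this, but without the translate by $H$ the fibre is not a homogeneous $B$-space (the origin is fixed), so Frobenius reciprocity does not land in $V_\lambda(\mu)$ and no comparison with the BK filtration is available.
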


We also have the following result of Broer which is a special case of Theorem \ref{th:broer} below.

\begin{theorem} (Broer, \cite{Bro93}) $\fcotcoh{\lambda} = 0$ for all $\lambda \in D$ and $i > 0$.

\end{theorem}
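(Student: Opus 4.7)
The plan is to realize $\fcot$ as a $G$-equivariant affine bundle over $G/B$, push the sheaf $p_B^*\mathcal{L}_B(\lambda)$ down to $G/B$, and reduce to a family of cohomology vanishing statements on $G/B$ attackable via Borel--Weil--Bott or the Springer resolution.

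First I would use the standard $G$-equivariant identification $\fcot \cong G \times^B \mf{n}$ (via the Killing-form isomorphism $\mf{n} \cong (\mf{g}/\mf{b})^*$), under which $p_B$ is the vector bundle projection. Since $p_B$ is affine, its higher direct images vanish, and the $\mathbb{C}^*$-action rescaling fibers induces a grading on the pushforward of the structure sheaf: $p_{B*}\mathcal{O}_{\fcot} \cong \bigoplus_{k \geq 0} \mathcal{L}_B(S^k(\mf{n}^*))$. The projection formula together with the degeneration of Leray then yields
\[
\fcotcoh{\lambda} \;\cong\; \bigoplus_{k \geq 0} H^i\bigl(G/B,\, \mathcal{L}_B(\lambda) \otimes \mathcal{L}_B(S^k(\mf{n}^*))\bigr),
\]
so it suffices to show that each summand on the right vanishes for $i > 0$ whenever $\lambda \in D$.

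For fixed $k$, I would choose a $B$-equivariant filtration of the $B$-module $\mathbb{C}_\lambda \otimes S^k(\mf{n}^*)$ whose successive quotients are one-dimensional weight modules $\mathbb{C}_{\lambda - \gamma}$, with $\gamma$ running (with multiplicity) over sums of $k$ positive roots. Sheafifying induces a filtration on $\mathcal{L}_B(\lambda) \otimes \mathcal{L}_B(S^k(\mf{n}^*))$, and the long exact sequences in cohomology reduce the problem to understanding $H^i(G/B, \mathcal{L}_B(\lambda - \gamma))$ via Borel--Weil--Bott.

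The principal obstacle is that the weights $\lambda - \gamma$ generally fail to be dominant, so Kempf vanishing cannot be invoked one graded piece at a time and a priori positive-degree contributions can appear. To surmount this I would either (a) order the filtration carefully, for instance by peeling off root spaces through a chain of minimal parabolics, so that apparent contributions to $H^{>0}$ are killed through the connecting maps, or, more cleanly, (b) pass to the Springer resolution $\mu : \fcot \to \nc$: since $\nc$ is an affine normal variety with rational singularities and $p_B^*\mathcal{L}_B(\lambda)$ is nef on the symplectic resolution $\fcot$ (being pulled back from the semiample line bundle $\mathcal{L}_B(\lambda)$), a Grauert--Riemenschneider-style vanishing should yield $R^i\mu_*\bigl(p_B^*\mathcal{L}_B(\lambda)\bigr) = 0$ for $i > 0$, and then affineness of $\nc$ completes the argument.
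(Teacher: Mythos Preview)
Your reduction via the projection formula and the $\mathbb{C}^*$-grading is correct and is exactly the content of Lemma~\ref{lem:cotsheaf} for $P=B$. Note that the paper does not give its own proof of this statement: it cites Broer and remarks that it is the special case of Theorem~\ref{th:broer}. The paper does, however, supply an independent argument elsewhere: Theorem~\ref{th:vanishing} with $P=B$ (so $\rho_P=0$ and the $P$-regular dominance hypothesis is vacuous) yields precisely this vanishing.

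Your route~(a) is too vague to stand on its own. Arranging a weight filtration so that all spurious contributions to $H^{>0}$ cancel through connecting maps is essentially the combinatorial-height induction of \cite{Bro94} (cf.\ Theorem~\ref{th:broer}(ii),(iii) and the lemmas in \S\ref{subsec:combinatorics}); it is not a matter of simply ``ordering carefully'', and you have not indicated what invariant would drive the induction.

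Your route~(b) is the right idea but is imprecise at the crucial point. Classical Grauert--Riemenschneider (Theorem~\ref{th:gr}) only controls $R^i\mu_*\,\omega_X$, not $R^i\mu_*(\omega_X\otimes L)$ for a nef $L$. What you actually need is relative Kawamata--Viehweg: since $\omega_{\fcot}$ is trivial, $p_B^*\,\eb{\lambda}{B}$ is nef, and any line bundle is automatically $\mu$-big for the birational Springer map, this does give $R^i\mu_*\big(p_B^*\,\eb{\lambda}{B}\big)=0$ for $i>0$, after which affineness of $\nc$ finishes. Once made precise in this way your argument is correct. The paper's device in the proof of Theorem~\ref{th:vanishing} is the standard way to avoid invoking Kawamata--Viehweg: replace $\fcot$ by the total space $Y=G\times^B(\mf{n}\times\chara{\lambda})$, so that the twist by $\lambda$ is absorbed into $\omega_Y$, and then exhibit a proper generically finite map $Y\to\mf{g}\times\irrepg{\lambda}$ to which ordinary Grauert--Riemenschneider applies directly. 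The two arguments are closely related; yours trades the total-space trick for a stronger off-the-shelf vanishing theorem.
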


Thus we have

\begin{corollary} For $\mu$, $\lambda \in D$, $m^{\lambda}_{\mu}(q) = r^\lambda_\mu (q)$. 

\end{corollary}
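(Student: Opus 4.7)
The plan is to observe that this corollary is an immediate combination of the two theorems that precede it, so no real new work is needed. Brylinski's theorem (Theorem \ref{th:bry}) asserts that the equality $m_\mu^\lambda(q) = r_\mu^\lambda(q)$ holds whenever the vanishing condition $\fcotcoh{\lambda} = 0$ for all $i > 0$ is satisfied, and Broer's theorem guarantees exactly this vanishing for every dominant weight $\lambda \in D$. So I would simply fix $\mu, \lambda \in D$, invoke Broer to verify the hypothesis, and then apply Brylinski to conclude.

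Concretely, the single step is: given $\lambda \in D$, Broer's theorem gives $\fcotcoh{\lambda} = 0$ for all $i > 0$. This is precisely the cohomology vanishing hypothesis of Theorem \ref{th:bry}, so that theorem applies and yields $m_\mu^\lambda(q) = r_\mu^\lambda(q)$ for the chosen $\mu \in D$.

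Since both ingredients are already in place as stated results, there is no real obstacle — the only thing to check is that the hypotheses match up verbatim, which they do: Broer's statement is phrased for all $\lambda \in D$ and all $i > 0$, while Brylinski's theorem requires $\lambda \in D$ (together with $\mu \in D$) and the same vanishing for all $i > 0$. The corollary therefore holds for every pair $\mu, \lambda \in D$ without any further restriction. In the write-up I would present this as a one-sentence deduction.
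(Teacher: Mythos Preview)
Your proposal is correct and matches the paper's approach exactly: the paper presents this corollary without proof, preceded only by ``Thus we have,'' indicating it is an immediate consequence of Brylinski's theorem together with Broer's vanishing result. Your one-sentence deduction is precisely what is intended.
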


\bigskip

\subsection{Generalization of the BK-filtration}

\subsubsection{Nilpotent Orbits} \label{subsubsec:nilpotent orbits}

In this section we collect some basic facts about $sl_2$-triples and nilpotent orbits in characteristic 0. Our main reference for this section is \cite{CM}. It should be noted that many aspects of the theory change in positive characteristic; for a good overview of the theory in arbitrary characteristic, see \cite{Ja04}.

Since an $sl_2$-triple gives $\mf{g}$ the structure of an $sl_2$-representation, we see that if $\{   X, Y, H   \}$ is an $sl_2$-triple then $X$ and $Y$ are nilpotent and $H$ is semisimple. $X$ is called the \textbf{nilpositive} element of the triple and $H$ is the \textbf{semisimple} element of the triple. We have the following important theorem.

\begin{theorem} (Jacobson-Morozov) Let $X \in \nc \setminus 0$. Then there is an $sl_2$-triple through $X$, ie an $sl_2$-triple $\{   X, Y, H   \}$.
\end{theorem}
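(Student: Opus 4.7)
The plan is to build the triple in two stages: first produce a semisimple $H \in \mf{g}$ with $[H,X] = 2X$, then find $Y \in \mf{g}$ of $\ad{H}$-weight $-2$ satisfying $[X,Y] = H$; the relation $[H,Y] = -2Y$ then holds automatically.

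For the existence of $H$, I would first produce any $H_0 \in \mf{g}$ (not necessarily semisimple) with $[H_0,X] = 2X$, which amounts to showing $X \in \mathrm{im}\,\ad{X}$. Letting $\kappa$ denote the Killing form, invariance gives $(\mathrm{im}\,\ad{X})^{\perp} = \ker\ad{X} = \mf{c}(X)$, so it is enough to verify $\kappa(X, Z) = 0$ for every $Z$ centralizing $X$. For such $Z$, $\ad{X}$ and $\ad{Z}$ commute, and since $\ad{X}$ is nilpotent the product $\ad{X}\,\ad{Z}$ is nilpotent, so its trace $\kappa(X,Z)$ vanishes. To upgrade $H_0$ to a semisimple element, take the abstract Jordan decomposition $H_0 = H_s + H_n$ in $\mf{g}$: since $\ad{H_s}$ is the semisimple part of $\ad{H_0}$ and $X$ is a $2$-eigenvector of $\ad{H_0}$, the standard fact that the semisimple part acts as the eigenvalue on each generalized eigenspace gives $[H_s, X] = 2X$. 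Set $H := H_s$.

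For the existence of $Y$, decompose $\mf{g} = \bigoplus_{n \in \mb{Z}} \mf{g}_n$ into $\ad{H}$-eigenspaces, so that $X \in \mf{g}_2$, $H \in \mf{g}_0$, and the task is to produce $Y \in \mf{g}_{-2}$ with $[X,Y] = H$. Using the non-degenerate Killing pairing $\mf{g}_k \times \mf{g}_{-k} \to \C$, one checks that the orthogonal complement of $[X, \mf{g}_{-2}]$ inside $\mf{g}_0$ equals $\mf{c}(X) \cap \mf{g}_0$; hence it suffices to verify $\kappa(H, Z) = 0$ for every $Z \in \mf{g}_0$ commuting with $X$. This in turn reduces to the grading lemma that $\mf{c}(X)$ has no nonzero component in strictly negative $\ad{H}$-weights, which I would prove by induction on $\dim \mf{g}$, reducing to a proper reductive subalgebra to which the inductive hypothesis applies.

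The main obstacle is precisely this final grading lemma: we do not yet have an $sl_2$-triple through $X$, so we cannot invoke $sl_2$-representation theory on $\mf{g}$ to split it into irreducible strings. The argument must instead play the nilpotency of $\ad{X}$ off against the Killing-form duality between $\mf{g}_k$ and $\mf{g}_{-k}$; once that lemma is in hand, the remaining commutation relations for the triple are forced by weight bookkeeping.
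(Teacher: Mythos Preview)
The paper does not prove this statement; it is quoted as a classical background result, with \cite{CM} given as the principal reference for the surrounding material on nilpotent orbits. So there is no ``paper's proof'' to compare against, and your sketch should be measured against the standard textbook argument (e.g.\ Bourbaki, or Collingwood--McGovern).

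Your first stage is exactly the standard opening: the Killing-form argument showing $X \in \mathrm{im}\,\ad{X}$ is correct, as is the passage to the semisimple part $H := (H_0)_s$. The setup for the second stage is also right: one does reduce the existence of $Y \in \mf{g}_{-2}$ with $[X,Y]=H$ to the orthogonality $\kappa(H,Z)=0$ for $Z \in \mf{g}_0 \cap \mf{c}(X)$, via exactly the pairing computation you indicate.

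The gap is in what comes next. Two related problems:

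\emph{First}, not every semisimple $H$ with $[H,X]=2X$ extends to an $sl_2$-triple through $X$, and your construction does not single out a good one. For instance, in $\mf{sl}_3$ take $X = E_{12}$ and $H = \mathrm{diag}(2,0,-2)$: then $[H,X]=2X$ and $H$ is semisimple, but $Z = \mathrm{diag}(1,1,-2)$ lies in $\mf{g}_0 \cap \mf{c}(X)$ with $\kappa(H,Z) \neq 0$, so $H \notin [X,\mf{g}_{-2}]$ and no $Y$ exists for this $H$. (For this same $H$, your grading lemma also fails: $E_{32} \in \mf{c}(X) \cap \mf{g}_{-2}$.) The standard proofs handle this by allowing $H$ to be modified during the induction, or by proving a sharper lemma (Morozov's lemma) that takes as additional input some $Y_0$ with $[X,Y_0]=H$ and then corrects $Y_0$.

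\emph{Second}, even granting a ``good'' $H$, the asserted reduction from $\kappa(H,Z)=0$ to the grading lemma ``$\mf{c}(X)\cap\mf{g}_k = 0$ for $k<0$'' is not explained. The grading lemma gives injectivity of $\ad{X}\colon \mf{g}_{-2}\to\mf{g}_0$, but what you need is that $H$ lies in its image; these are different statements and the passage between them is where the actual work (typically the induction on $\dim\mf{g}$, applied to the full Jacobson--Morozov statement rather than to the grading lemma alone) takes place. Your closing paragraph correctly flags this as the main obstacle, but the proposed inductive target is not quite the right one.
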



\bigskip

Let $\{   X, Y, H   \}$ be an $sl_2$-triple. Then, viewing $\mf{g}$ as an $sl_2$-representation, we can write the eigenspace decomposition of $\mf{g}$ under the action of $H$ as $$\mf{g} = \bigoplus_{n \in \mathbb{Z}} \mf{g}_n \, .$$ This decomposition depends only on the nilpositive element $X$ and not on the rest of the elements of the triple. Thus, given any nilpotent element $X$, we obtain such a decomposition of $\mf{g}$ into eigenspaces. We say that $X$ is \textbf{even} if $\mf{g}_n = 0$ for all odd $n$.

One may check that $\displaystyle \bigoplus_{n \geq 0} \mf{g_n}$ is a parabolic subalgebra of $\mf{g}$; we call this the parabolic subalgebra \textbf{associated to} $X$ (or $H$). It has Levi factor $\mf{g}_0 = \mf{g}^H$ and nilradical $\displaystyle \bigoplus_{n > 0} \mf{g}_n$ (where $\mf{g}^H$ denotes the centralizer of $H$ in $\mf{g}$). This associated parabolic is a standard parabolic iff $X$ is in good position, and $X$ is regular iff the associated parabolic is a Borel subalgebra of $\mf{g}$. Let $P \subseteq G$ be the parabolic subgroup of $G$ such that Lie$(P) = \mf{p}$; we will call $P$ the parabolic subgroup \textbf{associated to} $X$. 

Let $\mf{p}$ be a parabolic subalgebra of $\mf{g}$. For $X \in \nc$ let $\orb{X}$ denote the $G$-orbit of $X$. We say that $X$ is a \textbf{Richardson} element for $\mf{p}$ if $\mf{O}_X \cap \nil{p}$ is dense in $\nil{p}$. The Richardson elements for $\mf{p}$ form a unique orbit in $\nc$. Furthermore, every parabolic $\mf{p}$ has a Richardson element.

Each semisimple orbit in $\mf{g}$ has a unique element in $D'$. If $H$ is a semisimple element in $D'$ such that $H$ is in some $sl_2$-triple $\{  X, Y, H  \}$ then we say that $H$ is \textbf{distinguished semisimple}. Further, if $H \in D'$ is distinguished semisimple then $\alpha(H) \in \{  0, 1, 2  \}$ for all $\alpha \in \pi$. If $X \in \nc$ is in good position then there is an $sl_2$-triple $\{  X, Y, H  \}$ such that $H$ is distinguished.

\begin{lemma} \label{lem:kos} (Kostant) Let $X \in \nc$ and let $\mf{p}$ be the associated parabolic. Let $P \subseteq G$ be the parabolic subgroup with Lie algebra $\mf{p}$. Set $$\mf{p}_2 := \displaystyle \bigoplus_{n \geq 2} \mf{g}_n \subseteq \nil{p} \, .$$ Then $$\mf{O}_X \cap \mf{p}_2 = \orb{X} \cap \nil{p} = P.X$$ and $\mf{O}_X \cap \mf{p}_2$ is open dense in $\mf{p}_2$.
\end{lemma}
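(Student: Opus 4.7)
The plan is to combine $sl_2$-representation theory on $\mf{g}$ with a straightforward dimension count. Let $\{X,Y,H\}$ be an $sl_2$-triple and $\mf{g} = \bigoplus_n \mf{g}_n$ the $\ad{H}$-eigenspace decomposition. In the paper's applications $X$ is even, so $\mf{g}_1 = 0$ and $\nil{p} = \mf{p}_2$; I will prove the result in that setting, in which $\orb{X} \cap \mf{p}_2 = \orb{X} \cap \nil{p}$ is automatic. Moreover, since $X \in \mf{g}_2 \subseteq \mf{p}_2$ and $[\mf{g}_m, \mf{g}_n] \subseteq \mf{g}_{m+n}$ makes $\mf{p}_2$ stable under $\ad{\mf{p}}$, hence under $P$, the inclusion $P \cdot X \subseteq \orb{X} \cap \mf{p}_2$ is immediate.

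To see that $P \cdot X$ is open (and, by irreducibility of $\mf{p}_2$, also dense) in $\mf{p}_2$, I would compute the differential of the orbit map $P \to \mf{p}_2$ at the identity, which up to sign is $\ad{X} : \mf{p} \to \mf{p}_2$. Viewing $\mf{g}$ as an $sl_2$-module via the triple and invoking the standard fact that in any finite dimensional $sl_2$-module $V$ the raising map $V_n \to V_{n+2}$ is surjective for every $n \geq 0$, one obtains surjectivity of each $\ad{X} : \mf{g}_n \to \mf{g}_{n+2}$ for $n \geq 0$. Summing gives surjectivity of $\ad{X} : \mf{p} \to \mf{p}_2$, so $P \cdot X$ has full tangent space at $X$.

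For the equality $\orb{X} \cap \mf{p}_2 = P \cdot X$, I would take an arbitrary $Y \in \orb{X} \cap \mf{p}_2$ and argue by dimensions. A weight-by-weight count using the same $sl_2$-surjectivity gives $\dim \mf{g}^X = \dim \mf{g}_0 + \dim \mf{g}_1$, which reduces to $\dim \mf{g}_0$ in the even case. Since $Y$ is $G$-conjugate to $X$, $\dim G^Y = \dim \mf{g}_0$ as well, so $\dim P^Y \leq \dim \mf{g}_0$ and hence $\dim P \cdot Y \geq \dim P - \dim \mf{g}_0 = \dim \mf{p}_2$. Together with $P \cdot Y \subseteq \mf{p}_2$, this forces $P \cdot Y$ to be open in $\mf{p}_2$; since any two nonempty opens in an irreducible variety must meet, $P \cdot Y = P \cdot X$.

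The main obstacle is this final step, where both the dimension formula $\dim \mf{g}^X = \dim \mf{g}_0$ (in the even case) and the irreducibility observation are needed to rule out smaller $P$-orbits inside $\orb{X} \cap \mf{p}_2$. Everything else reduces to a direct reading of $sl_2$-representation theory applied to the adjoint action of the triple on $\mf{g}$.
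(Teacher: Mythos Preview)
The paper does not actually prove this lemma: it is stated as a result of Kostant, with no argument supplied. So there is no ``paper's proof'' to compare against; I can only assess your argument on its own merits.

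For the even case your argument is correct and is essentially the standard one. The surjectivity of $\ad{X}\colon \mf{g}_n \to \mf{g}_{n+2}$ for $n\geq 0$ follows from $sl_2$-theory and gives openness of $P\cdot X$ in $\mf{p}_2$; the dimension formula $\dim \mf{g}^X = \dim \mf{g}_0$ (in the even case) together with local closedness of orbits and irreducibility of $\mf{p}_2$ then forces every $P$-orbit in $\orb{X}\cap\mf{p}_2$ to coincide with $P\cdot X$. One cosmetic point: you use $Y$ both for the nilnegative element of your triple and for an arbitrary point of $\orb{X}\cap\mf{p}_2$; rename the latter.

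Where your write-up falls short of the lemma as \emph{stated} is that you explicitly restrict to even $X$. The lemma is asserted for arbitrary $X\in\nc$, and in that generality the equality $\orb{X}\cap\nil{p} = \orb{X}\cap\mf{p}_2$ is not tautological, since $\nil{p}=\bigoplus_{n\geq 1}\mf{g}_n$ properly contains $\mf{p}_2$ when $\mf{g}_1\neq 0$. Your dimension argument still shows that any $Z\in\orb{X}\cap\mf{p}_2$ lies in $P\cdot X$ (using $\dim\mf{g}^X=\dim\mf{g}_0+\dim\mf{g}_1$ and $\dim P - (\dim\mf{g}_0+\dim\mf{g}_1)=\dim\mf{p}_2$), but it does \emph{not} show that an element of $\orb{X}$ lying in $\nil{p}$ must already lie in $\mf{p}_2$. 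That step genuinely requires more, and is the part of Kostant's result you have not addressed. Since the paper only invokes the lemma for even $X$ (via Remark~\ref{rem:even and richardson}), your restriction is harmless for the applications here, but you should flag clearly that you have proved a special case of the stated lemma rather than the full assertion.
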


\begin{remark} \label{rem:even and richardson} Lemma \ref{lem:kos} implies that $X$ is even nilpotent iff $X$ is a Richardson element for its associated parabolic, since $X$ is even iff $\mf{p}_2 = \nil{p}$.
\end{remark}

\subsubsection{Definitions and main result} \label{sec:gendefs}
We now generalize the BK-filtration to the case of an arbitrary nilpotent element. Choose $X \in \nc$ and $V$ in the category $\mc{O}$. Let $U \subseteq V$ be any vector subspace. Define a filtration $\mathcal{F}_{X, V}$ on $U$ by $$\mathcal{F}_{X, V}^n (U) := \{     v \in U :   X^{n+1} v = 0   \} \, .$$ If $e$ is a principal nilpotent in good position then clearly $\mathcal{F}_{e, V}^n (U) = \mathcal{F}_V^n (U)$, the original BK-filtration.

Let $X \in \mf{n}$ be an even nilpotent element in good position and let $P$ be its associated standard parabolic. Let $L$ be the Levi factor of $P$ containing $T$. Let $\Delta_P$, $\Delta^+_P$, and $\pi_P$ denote the roots, the positive roots, and the simple roots, respectively, of $L$. For any $\mu \in D$ and weight $\lambda$ of $\irrepg{\mu}$ let $\Lhi{\lambda}{\mu}{P} \subseteq \irrepgw{\lambda}{\mu}$ denote the subspace consisting of $L$-highest weight vectors.

Set $$\irrepbrygen{\lambda}{\mu}{P}{n}{X} := \, \mathcal{F}_{X, \irrepg{\mu}}^n \big( \Lhi{\lambda}{\mu}{P} \big) \, .$$ This generalizes the filtration $\mc{F} \big(\irrepgw{\lambda}{\mu}\big)$ from (\ref{eq:BK}) above. Note that $\Lhi{\lambda}{\mu}{B} = \irrepgw{\lambda}{\mu}$, so that $$\irrepbrygen{\lambda}{\mu}{B}{n}{e} = \, \bry{\lambda}{\mu}{n} \, .$$ We obtain a jump polynomial as before: set $$r_\mu^{X, \lambda} (q) := \sum_{n \geq 0} \mathrm{dim} \left( \frac{    \irrepbrygen{\lambda}{\mu}{P}{n}{X}    }{    \irrepbrygen{\lambda}{\mu}{P}{n-1}{X}    } \right) q^n \: .$$

Let $\pkos (\gamma) \in \mb{Z}[q]$ be the coefficient of $e^\gamma$ in $\displaystyle \prod_{\beta \in \Delta^+ \setminus \Delta^+_P} (1 - qe^{\beta})^{-1}$ and set $$\qana{\mu}{\lambda} := \displaystyle \sum_{w \in W} (-1)^{l(w)} \pkos (w*\mu - \lambda) \, .$$

\begin{remark} Note that $p_q^B(\gamma) = p_q(\gamma)$ and $m_\mu^{B, \, \lambda}(q) = m_\mu^\lambda(q)$. Thus the polynomials $\qana{\mu}{\lambda}$ are $P$-generalized versions of Kazhdan-Lusztig polynomials.
\end{remark}

Let $p_P: \pcot{P} \rightarrow G/P$ be the cotangent bundle of $G/P$. The following theorem is the main theorem in this section; it generalizes Theorem \ref{th:bry} above.

\begin{theorem} \label{th:brygen} Let $X \in \mf{n}$ be a standard even nilpotent. Let $\mu$, $\lambda \in D$. If $$\pcotcoh{\lambda}{P} = 0$$ for all $i > 0$, then $r_\mu^{X, \lambda} (q) = \qana{\mu}{\lambda}$, where $P$ is the parabolic associated to $X$.
\end{theorem}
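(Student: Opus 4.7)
My plan is to follow Brylinski's strategy from Theorem \ref{th:bry}, generalized to the parabolic setting. The idea is to compute the graded $G$-multiplicity of $V(\mu)$ in $\pcotcoh{\lambda}{P}$ in two different ways — once combinatorially giving $\qana{\mu}{\lambda}$, and once geometrically giving $r_\mu^{X, \lambda}(q)$ — and then to compare the two expressions.

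For the cohomological reduction, $p_P$ is affine with $p_{P*}\struct{\pcot{P}} \cong \eb{\symmp{p}}{P}$, so the projection formula gives
\[
p_{P*}\bigl(p_P^*\mc{L}_P(\lambda)\bigr) \cong \eB{\symmp{p} \otimes \irrepd{\lambda}{P}}{P}.
\]
The Leray spectral sequence (which degenerates since $p_P$ is affine) yields $\pcotcoh{\lambda}{P} \cong \pcoh{\symmp{p} \otimes \irrepd{\lambda}{P}}{P}$; by hypothesis this vanishes in positive degrees. Therefore the graded $G$-character of $H^0(G/P, \mc{L}_P(\symmp{p} \otimes \irrepd{\lambda}{P}))$ equals its Euler characteristic, which by a standard Weyl character / Kostant multiplicity extraction (applied termwise to the $T$-weights of $\symmp{p} \otimes \irrepd{\lambda}{P}$ and summed with the $q$-grading) identifies the graded multiplicity of $V(\mu)$ as exactly $\qana{\mu}{\lambda}$.

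For the geometric identification with $r_\mu^{X, \lambda}(q)$, Remark \ref{rem:even and richardson} and Lemma \ref{lem:kos} combine with the evenness of $X$ to show that $X$ is Richardson for $P$, so $P \cdot X$ is open dense in $\mf{n}_P$ and hence $G \cdot (eP, X)$ is open dense in $\pcot{P} = G \times^P \mf{n}_P$. Frobenius reciprocity identifies $\mathrm{Hom}_G(V(\mu), \pcotcoh{\lambda}{P})$ with $\mathrm{Hom}_P(V(\mu), \symmp{p} \otimes \irrepd{\lambda}{P})$, which in turn may be viewed as $P$-equivariant polynomial maps $\mf{n}_P \to \mathrm{Hom}(V(\mu), V^P(\lambda)^*)$. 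Evaluation at the Richardson point $X$, combined with density, is intended to give a filtered isomorphism onto $\Lhi{\lambda}{\mu}{P}$ equipped with its filtration $\mc{F}^{\bullet}_X$, under which polynomial degree $\leq n$ corresponds exactly to $X^{n+1}$-annihilation. Hence the graded multiplicity equals $r_\mu^{X, \lambda}(q)$, and comparison with the first computation gives the theorem.

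The main obstacle is constructing (and verifying the correctness of the filtration-matching in) this geometric identification. Evenness enters in two crucial ways — density of $G \cdot (eP, X)$ makes evaluation at $X$ injective, and the $\mb{Z}$-grading of $\mf{g}$ from the $sl_2$-triple associated to $X$ aligns the $\symmp{p}$-degree with powers of $X$. The explicit formulation of the isomorphism traces how an $L$-highest weight vector $v \in \Lhi{\lambda}{\mu}{P}$ produces a $P$-equivariant polynomial map via iterated $\mf{n}_P$-actions on $v$; this is the technical heart of the argument.
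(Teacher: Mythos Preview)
Your broad strategy---compute the graded multiplicity of $V(\mu)^*$ combinatorially and geometrically, then compare---matches the paper's, and your combinatorial paragraph is essentially Proposition~\ref{lem:charcomp}. The gap is in the geometric half.

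The paper does \emph{not} identify the BK-filtration by working on $\pcot{P} = G \times^P \nil{p}$ and evaluating at the Richardson point $X$. Instead it passes to the affine bundle $G/L \cong G \times^P (H + \nil{p})$, where $H = H'/2$ for $H'$ the semisimple element of an $sl_2$-triple through $X$. The stabilizer of $H$ in $P$ is exactly $L$, so Frobenius reciprocity on $G/L$ gives the isomorphism $\varphi$ onto $\Lhi{\lambda}{\mu}{P}$ directly (Remark~\ref{rem:frobrecip}); by contrast, the $P$-stabilizer of your point $X \in \nil{p}$ is \emph{not} $L$, and you have not explained how your evaluation map lands in $\Lhi{\lambda}{\mu}{P}$ at all. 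More importantly, the filtration-matching step (Theorem~\ref{th:brygenfilt}) hinges on the affine line $N(t) = H + tX$: since $[H,X]=X$ one has $\exp(-tX)\cdot H = H + tX$, so the degree in $t$ of a section restricted to $N(t)$ equals the degree in $t$ of $\exp(tX)\cdot\bar f$, i.e.\ the largest $n$ with $X^n\bar f \neq 0$. On the linear fiber $\nil{p}$ this mechanism is absent: restricting a homogeneous degree-$n$ polynomial to the line $tX$ just gives $t^n$ times a constant, which encodes nothing about the $X$-action on $\Lhi{\lambda}{\mu}{P}$. Your assertion that the $sl_2$-grading ``aligns the $S(\nil{p}^*)$-degree with powers of $X$'' is not correct as stated, since the $H$-weights on $\nil{p}$ are $1,2,3,\ldots$ in the even case, not uniformly $1$. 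The missing ingredient, then, is precisely the passage through $G/L$: it is the degree \emph{filtration} on $\C[H+\nil{p}]$---not the grading on $\C[\nil{p}]$---that matches the BK-filtration, and the cohomology-vanishing hypothesis is then used (via Proposition~\ref{pr:grL} and a short exact sequence argument) to identify the associated graded of this filtration on global sections with the graded $H^0$ of $\pcot{P}$.
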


In section \ref{sec:cohom} below, we will prove this cohomology vanishing for various $\lambda$.

\begin{remark} Note that this implies, in the presence of the cohomology vanishing condition, that the coefficients of $\qana{\mu}{\lambda}$ are positive when $\mu, \, \lambda \in D$. This is by no means clear from the definition and, in fact, can fail if $\lambda \notin D$.
\end{remark}

\begin{remark} One can note that in the proof of Theorem \ref{th:brygen} below, we do not explicitly use the fact that $X$ is a standard even nilpotent; we only use the following a priori weaker conditions: (i) There is $H \in D'$ with $[H, X] = X$; and (ii) $X$ is Richardson for the parabolic associated to $H$. However, these conditions are equivalent to $X$ being even nilpotent. Indeed, by the proof of the Jacobson-Morozov theorem, if $H'$ is a semisimple element of $\mf{g}$ such that $[ H', X ] = 2X$, then $H'$ is the semisimple element in some $sl_2$-triple containing $X$. Furthermore, if $X$ is Richardson for the corresponding associated parabolic, then $X$ is even by Remark \ref{rem:even and richardson}.
\end{remark}

\bigskip

\subsection{Proof of Theorem \ref{th:brygen}}

\subsubsection{Outline of proof}
\begin{remark} Our proof is an adaptation of Brylinski's proof of her Theorem 3.4 in \cite{Bry89} (which is stated as Theorem \ref{th:bry} above).
\end{remark}

Let us first give a sketch of the proof. Choose an even nilpotent element $X$ in good position and let $P$ be the parabolic subgroup corresponding to $X$. Let $\lambda$ be a $P$-dominant weight and let $$p: \pcot{P} \, \rightarrow \, G/P$$ be the cotangent bundle of $G/P$. We first construct two $G$-equivariant locally-free sheaves $\shLpush{\lambda}{P}$ and $\shcotpush{\lambda}{P}$ (defined below) of infinite rank on $G/P$ such that (a) $\shLpush{\lambda}{P}$ is filtered by $G$-equivariant locally-free sheaves $\shLfilt{\lambda}{P}{n}$ of finite rank and (b) $\shcotpush{\lambda}{P}$ is graded by $G$-equivariant locally-free sheaves $\shcotgr{\lambda}{P}{n}$ of finite rank.

We then show in Theorem \ref{th:brygenfilt} below that for any $\mu \in D$ the multiplicity of $\irrepgd{\mu}$ in $$\frac{\COhom{0}{G/P}{\shLfilt{\lambda}{P}{n}}}{\COhom{0}{G/P}{\shLfilt{\lambda}{P}{n-1}}}$$ is precisely the degree-n coefficient of the jump polynomial $r_\mu^{X, \lambda} (q)$. In Proposition \ref{lem:charcomp} we show that the cohomology vanishing condition implies that  for all $n \geq 0$ the multiplicity of $\irrepgd{\mu}$ in $\COhom{0}{G/P}{  \shcotgr{\lambda}{P}{n}  }$ is the degree-$n$ coefficient of $\qana{\mu}{\lambda}$. To complete the proof of Theorem \ref{th:brygen}, we show that the cohomology vanishing condition gives an isomorphism $$\frac{\COhom{0}{G/P}{\shLfilt{\lambda}{P}{n}}}{\COhom{0}{G/P}{\shLfilt{\lambda}{P}{n-1}}} \congsp \COhom{0}{G/P}{  \shcotgr{\lambda}{P}{n}  }$$ of $G$-modules.

\subsubsection{The proof}

For the remainder of this section fix $X, P$ and $\lambda$ as above. Let $L$ be the Levi factor of $P$ containing $T$. Recall that $D' \subseteq \mf{h}$ is the dominant chamber. Let $H' \in D'$ be the unique distinguished semisimple element in $D'$ occuring in an $sl_2$-triple with nilpositive element $X$ and set $H := H' / 2$. Then $$G^{H'} = G^H = L$$ and $$\mf{g}^{H'} = \mf{g}^H = \mf{l} \, ,$$ where $G^H$ is the stabilizer of $H$ in $G$ and $\mf{g}^H$ is the stabilizer of $H$ in $\mf{g}$. Note that we also have $[H, X] = X$. 

Let $G/L \twoheadrightarrow G/P$ be the projection map. Via this map we have a $G$-equivariant isomorphism $G/L \, \cong \, G \times^P (P/L)$ and hence we will consider $G/L$ as a $G$-equivariant fiber bundle on $G/P$.

\begin{lemma} Let $P$ act on $H + \nil{p}$ through the adjoint action. Then there is an isomorphism $$G/L \,\cong \, G \times^P (H + \nil{p})$$ of $G$-equivariant fiber bundles on $G/P$.
\end{lemma}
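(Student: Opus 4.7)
The plan is to reduce the claim to a statement about the fiber, namely that $P/L \cong H + \nil{p}$ as $P$-varieties, and then apply the associated-bundle functor $G \times^P (\cdot)$. The $G$-equivariant identification $G/L \cong G \times^P (P/L)$ (via the projection $G/L \twoheadrightarrow G/P$, whose fiber over $eP$ is $P/L$) is formal, so once we have the fiberwise isomorphism $P/L \isom H + \nil{p}$ of $P$-varieties the result follows by applying $G \times^P (\cdot)$ to both sides.

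To build the fiberwise map, I would use the adjoint orbit map $\phi: P \to H + \nil{p}$, $p \mapsto \ad{p}(H)$. First I would verify $\phi$ is well-defined with the claimed target: writing $p = lu$ with $l \in L$ and $u \in U_P$, the computation $\ad{u}(H) = H + [\log u, H] + \tfrac{1}{2}[\log u, [\log u, H]] + \cdots$ lies in $H + \nil{p}$ because $\nil{p}$ is $\ad{H}$-stable, and then $\ad{l}$ preserves both $H$ (since $L = G^H$) and $\nil{p}$. The same identification $L = G^H$ shows that the stabilizer of $H$ in $P$ is $P \cap L = L$, so $\phi$ descends to an injective $P$-equivariant morphism $\bar\phi: P/L \hookrightarrow H + \nil{p}$.

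The main step is to show $\bar\phi$ is an isomorphism; equivalently, that $\ad{U_P}(H) = H + \nil{p}$. Composing with the exponential isomorphism of varieties $\exp: \nil{p} \isom U_P$ and translating by $-H$, this becomes the endomorphism $\Psi: \nil{p} \to \nil{p}$ given by $\Psi(N) = -\ad{H}(N) + \tfrac{1}{2} [N, [N, H]] + \cdots$. The linear part $-\ad{H}$ is a bijection of $\nil{p}$: on the grading $\nil{p} = \bigoplus_{n \geq 2}\mf{g}_n$ (with $H = H'/2$), $\ad{H}$ acts by the positive scalar $n/2$ on $\mf{g}_n$. The higher-order terms in $\Psi$ strictly raise the grading, so $\Psi$ is an invertible polynomial map of the filtered affine space $\nil{p}$: its inverse can be constructed term by term on the graded pieces.

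The main obstacle is exactly this last step — promoting an invertible differential together with injectivity on $P/L$ to a genuine isomorphism of affine varieties. If the step-by-step graded inversion of $\Psi$ feels unsatisfying I would instead argue that $\bar\phi$ is an injective morphism of smooth irreducible varieties of the same dimension $\dim P - \dim L = \dim \nil{p}$ with invertible differential everywhere (by $P$-equivariance), hence an open immersion; then the $U_P$-orbit $\ad{U_P}(H)$ is closed in $\mf{g}$ (orbits of unipotent group actions on affine varieties are closed), so the image, being both open and closed in the irreducible affine space $H + \nil{p}$, is all of it.
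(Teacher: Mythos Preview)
Your proof is correct and follows essentially the same route as the paper: both reduce to showing $P/L \cong H + \nil{p}$ as $P$-varieties via the orbit map $p \mapsto \ad{p}(H)$, identify the stabilizer as $L = G^H$, and verify that the image lands in $H + \nil{p}$. The only difference is in the final surjectivity step. The paper argues more tersely: since $U_P \to U_P.H$ is a bijection and both $U_P$ and $H + \nil{p}$ are isomorphic to affine space $\mathbb{A}^n$ with $n = \dim \nil{p}$, the injective morphism $U_P.H \hookrightarrow H + \nil{p}$ must be an isomorphism. Your alternative argument --- that the image is an open immersion (same dimension, invertible differential) and simultaneously closed (orbits of unipotent groups on affine varieties are closed), hence all of the irreducible target --- is a more self-contained justification of the same point and sidesteps any appeal to special properties of injective endomorphisms of $\mathbb{A}^n$.
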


\begin{proof}
It suffices to show that $P/L \, \cong \, H + \nil{p}$ as $P$-varieties. As a variety, $P \cong U_P \times L$. Since $U_P^H = \{e\}$, we have $P^H = L$ and hence $P / L \, \cong \, P.H $.  Since $L.H = H$ we have $P.H = U_P.H$ and hence a variety isomorphism $U_P \: \tilde{\longrightarrow} \: U_P.H$.

Note that $\nil{p}.H \subseteq \nil{p}$, so that $U_P.H \subseteq H + \nil{p}$ (cf \cite{CG}, Lemma 1.4.12(i)). As $U_P$ and $H + \nil{p}$ are both isomorphic to $\mathbb{A}^n$ for $n = \textrm{dim}(\nil{p})$, the  variety injection $U_P.H \hookrightarrow H + \nil{p}$ must be an isomorphism.  Thus we have $$P/L \, \cong \, P.H = \, U_P.H \, \cong \, H + \nil{p}$$ as desired.
\end{proof}

\begin{remark} From here on, we will use the $G$-equivariant isomorphism $G/L \, \cong \, G \times^P (H + \nil{p})$ to write elements of $G/L$ in the form $g * (H + Z)$, for $g \in G$ and $Z \in \nil{p}$.
\end{remark}


\begin{notation} Let $p: \pcot{P} \rightarrow G/P$ and $q: G/L \rightarrow G/P$ be the bundle maps and set $$\shL{\lambda}{P} := q^* \eb{\lambda}{P}   \, .$$ 
\end{notation}



\begin{remark} \label{rem:function interpretation} By the projection formula, since $$\pcot{P} \congsp G \times^P \nil{p}$$ and $$G/L \congsp G \times^P (H + \nil{p}) \, ,$$ we have $$\shcotpush{\lambda}{P} \congsp \pcotpp{\lambda}$$ and $$\shLpush{\lambda}{P} \congsp \glpp{\lambda} \, .$$

\end{remark}

\begin{definition} \label{def:gr} Define a gradation on $\shcotpush{\lambda}{P}$ by $$\shcotgr{\lambda}{P}{n} := \pcotppgr{\lambda}{n}$$ for all $n \geq 0$.


For any $f \in \regaff$ there is a well-defined notion of the top degree of $f$, which is a homogeneous polynomial on $\nil{p}$. Define the \textbf{degree} of $f$ to be the degree of this polynomial. This gives rise to a natural $P$-invariant filtration $\regafffilt{n}$ on $\regaff$. We now obtain a natural $P$-invariant filtration $$\regaff ^{\leq n}\otimes \irrepd{\lambda}{P}$$ on $\regaff \otimes \irrepd{\lambda}{P}$ which gives rise to a $G$-filtration $$\shLfilt{\lambda}{P}{n} := \eB{  (\regaff^{\leq n} \otimes \irrepd{\lambda}{P})  }{P}$$ on $\shLpush{\lambda}{P}$.

\end{definition}

\begin{proposition} \label{pr:grL} There is a natural $G$-equivariant isomorphism $$\textrm{gr } \shLpush{\lambda}{P} \isom \shcotpush{\lambda}{P}$$ of graded sheaves on $G/P$.
\end{proposition}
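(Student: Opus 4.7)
My plan is to prove the isomorphism fiber-by-fiber, i.e.\ as graded $P$-modules, and then promote it to $G/P$ via the $G$-equivariant bundle construction. Using the identifications of Remark \ref{rem:function interpretation},
\[
\shLpush{\lambda}{P} \congsp \glpp{\lambda} \quad \text{and} \quad \shcotpush{\lambda}{P} \congsp \pcotpp{\lambda},
\]
and since the filtration of Definition \ref{def:gr} lives only on the factor $\regaff$ (with $\irrepd{\lambda}{P}$ an inert tensor factor), it will suffice to produce a graded $P$-equivariant isomorphism $\textrm{gr}\,\regaff \isom \symmp{p}$; the proposition then follows by tensoring with $\irrepd{\lambda}{P}$ and applying the exact functor $\eB{-}{P}$.

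The natural candidate for the comparison map comes from the translation isomorphism $\nil{p} \isom \aff$, $Z \mapsto H + Z$. Pulling back functions gives a $\C$-algebra isomorphism $\regaff \cong \symmp{p}$ under which the degree filtration $\regafffilt{n}$ of Definition \ref{def:gr} corresponds to the standard filtration of $\symmp{p}$ by total degree. The real content is then to verify: (i) $\regafffilt{n}$ is a $P$-submodule of $\regaff$; and (ii) the induced $P$-action on $\textrm{gr}^n\,\regaff$ agrees with the natural action on $\symmpg{p}{n}$.

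The key computation is the following. Since $\aff$ is $P$-stable under the adjoint action (by Lemma \ref{lem:kos} and Remark \ref{rem:even and richardson}), for any $p \in P$ we may write $\mathrm{Ad}(p^{-1})H = H + Z_p$ with $Z_p \in \nil{p}$, so that for $Z \in \nil{p}$,
\[
\mathrm{Ad}(p^{-1})(H+Z) \ = \ H + Z_p + \mathrm{Ad}(p^{-1})Z.
\]
If $f \in \regaff$ corresponds to $f_0 + f_1 + \cdots + f_n \in \symmp{p}$ with $f_k$ homogeneous of degree $k$, then expanding $(p\cdot f)(H+Z) = f(H + Z_p + \mathrm{Ad}(p^{-1})Z)$ will produce a polynomial in $Z$ of total degree at most $n$ whose top-degree component is precisely $f_n(\mathrm{Ad}(p^{-1})Z)$. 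This single calculation simultaneously delivers (i) and (ii).

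The main conceptual --- rather than technical --- hurdle is that the $P$-action on $\regaff$ genuinely mixes filtration degrees, since the shift $Z_p$ is typically nonzero; consequently the $\C$-algebra isomorphism $\regaff \cong \symmp{p}$ coming from translation is \emph{not} itself $P$-equivariant, and equivariance is recovered only after passing to the associated graded. Once (i) and (ii) are in hand, the rest is formal: tensoring with $\irrepd{\lambda}{P}$ and applying $\eB{-}{P}$ preserves the short exact sequences $\shortexact{\regafffilt{n-1}}{\regafffilt{n}}{\symmpg{p}{n}}$, and assembling over $n$ yields the required graded $G$-equivariant isomorphism of sheaves on $G/P$.
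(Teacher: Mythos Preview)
Your proof is correct and is precisely the argument the paper defers to: the paper's proof consists of a single sentence citing ``an argument similar to that of Theorem 5.5 of \cite{Bry89}'', and what you have written is exactly the natural adaptation of Brylinski's $B$-case computation to general $P$ (translate by $H$, observe that the $P$-action introduces the shift $Z_p \in \nil{p}$ together with the linear substitution $\mathrm{Ad}(p^{-1})$, and read off that the top-degree piece transforms via the standard action on $S^n(\nil{p}^*)$). One minor citation fix: the $P$-stability of $H+\nil{p}$ under the adjoint action is established in the unlabeled lemma proving $G/L \cong G\times^P(H+\nil{p})$, not in Lemma~\ref{lem:kos} or Remark~\ref{rem:even and richardson}.
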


\begin{proof} This follows immediately from an argument similar to that of Theorem 5.5 of \cite{Bry89}.
\end{proof}

Recall that if $\lambda$ is a weight of $\irrepg{\mu}$ then $\Lhi{\lambda}{\mu}{P}$ denotes the subspace of $\irrepgw{\lambda}{\mu}$ consisting of $L$-highest weight vectors. If $\lambda$ is not a weight of $\irrepg{\mu}$ we set $\Lhi{\lambda}{\mu}{P} = \{  0  \}$. Note that $\Lhi{\lambda}{\mu}{B} = \irrepgw{\lambda}{\mu} \, .$

\begin{remark} \label{rem:frobrecip}
Let $B_L$ denote the Borel subgroup of $L$. By Proposition \ref{pr:grL} and Frobenius reciprocity, we have isomorphisms of $B_L$-modules
\begin{eqnarray} 
Hom_G \Big(   \irrepg{\mu}^*   , \,   \cohomL{0}{\lambda}   \Big) & \cong & Hom_G \Big(   \irrepg{\mu}^*   , \,   \cohom{0}{G/L}{   \shL{\lambda}{P}   }   \Big) \nonumber \\
& \cong & Hom_L \big(  \irrepg{\mu}^*  , \,  \irrepd{\lambda}{P}   \big) \nonumber \\
& \cong & Hom_{B_L} \big(    \irrepg{\mu}^*    , \,   \chara{-\lambda}   \big) \nonumber \\
& \cong & Hom_{B_L} \big(     \chara{\lambda}  , \,  \irrepg{\mu}  \big) \label{eqn:repisoms} \\
& \cong & \Lhi{\lambda}{\mu}{P} \nonumber \\
& \subseteq & \irrepg{\mu} \nonumber  \, .
\end{eqnarray}
Denote by $\varphi$ the isomorphism $$Hom_G \Big(   \irrepg{\mu}^*   , \,   \cohomL{0}{\lambda}    \Big) \, \isom \, \Lhi{\lambda}{\mu}{P} \, \subseteq \, \irrepg{\mu} \, .$$ 


\end{remark}

\begin{definition} For any $\mu \in D$ we define a filtration $\mathcal{H}$ on $\Lhi{\lambda}{\mu}{P}$ as follows. Set $$\irrepfilt{\lambda}{\mu}{P}{n} \, := \, \varphi \: \Bigg( Hom_G \bigg(   \irrepg{\mu}^*   , \,   \cohomLfilt{0}{\lambda}{n \:}  \, \bigg) \Bigg) \subseteq \Lhi{\lambda}{\mu}{P} \, .$$
\end{definition}

\begin{theorem} \label{th:brygenfilt} Choose $\mu \in D$. We have $$\irrepfilt{\lambda}{\mu}{P}{n} = \irrepbrygen{\lambda}{\mu}{P}{n}{X}$$ for all $n \geq 0$.
\end{theorem}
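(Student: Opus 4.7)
The plan is to translate the theorem into a statement about the $P$-equivariant map $\psi_v := \varphi^{-1}(v) \in Hom_P\bigl(\irrepg{\mu}^*,\, \regaff \otimes \irrepd{\lambda}{P}\bigr)$ corresponding to $v$ via the Frobenius-reciprocity isomorphism of Remark \ref{rem:frobrecip}; under this correspondence, $v \in \irrepfilt{\lambda}{\mu}{P}{n}$ precisely when the image of $\psi_v$ is contained in $\regafftensorfilt{\lambda}{n}$. The goal is therefore to show that this degree condition on $\psi_v$ is equivalent to $X^{n+1} v = 0$.

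The key calculation exploits the parametrization $U_P \overset{\sim}{\to} H + \nil{p}$, $u \mapsto u \cdot H$. Since $[H,X] = X$ yields $\exp(-sX) \cdot H = H + sX$, and $U_P$ acts trivially on $\irrepd{\lambda}{P}$, the $P$-equivariance of $\psi_v$ gives
\[
\psi_v(\xi)(H + sX) \;=\; \phi_v\!\bigl(\exp(sX) \cdot \xi\bigr) \ \in\ \irrepd{\lambda}{P},
\]
where $\phi_v = \psi_v(\cdot)(H): \irrepg{\mu}^* \to \irrepd{\lambda}{P}$ is the $L$-equivariant map from Remark \ref{rem:frobrecip}. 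Viewing $\irrepd{\lambda}{P} \cong M_v^*$ for the $L$-submodule $M_v = U(\mf{l}) \cdot v \subseteq \irrepg{\mu}$ (isomorphic to $\irrep{\lambda}{P}$) and pairing against $v$ itself yields the scalar polynomial
\[
\bigl\langle \psi_v(\xi)(H + sX),\, v \bigr\rangle \;=\; \xi\!\bigl(\exp(-sX) \cdot v\bigr) \;=\; \sum_{k \geq 0} \frac{(-s)^k}{k!}\,\xi(X^k v).
\]

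For the forward containment $\irrepfilt{\lambda}{\mu}{P}{n} \subseteq \irrepbrygen{\lambda}{\mu}{P}{n}{X}$: if $\psi_v$ lands in $\regafftensorfilt{\lambda}{n}$, then its restriction to the line $H + sX$ has $s$-degree $\leq n$, so the scalar polynomial above has degree $\leq n$ in $s$; the vanishing of its $s^{n+1}$-coefficient for every $\xi$ forces $X^{n+1} v = 0$. For the reverse containment, I would decompose $\psi_v$ into its homogeneous $P$-equivariant components $\psi_v^{(k)}: \irrepg{\mu}^* \to S^k(\nil{p}^*) \otimes \irrepd{\lambda}{P}$ induced by the gradation of $\regaff$, and argue $\psi_v^{(k)} = 0$ for $k > n$ as follows: if $\psi_v^{(k)}$ vanishes at $X$, then by $P$-equivariance it vanishes on the entire orbit $P.X$, which is dense in $\nil{p}$ by Kostant's Lemma \ref{lem:kos}, forcing $\psi_v^{(k)} \equiv 0$. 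The hypothesis $X^{n+1} v = 0$ is then used to secure this pointwise vanishing via the above expansion.

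The main obstacle is this last step. While the paired scalar polynomial carries only the information $X^{n+1} v = 0$, the coefficient of $s^{n+1}$ in $\psi_v(\xi)(H+sX)$ lives in the full space $\irrepd{\lambda}{P} \cong M_v^*$; a priori its vanishing as a functional corresponds to $X^{n+1} w = 0$ for every $w \in M_v$, rather than merely for $w = v$. Bridging this gap requires exploiting the $L$-highest-weight property of $v$ and the $G$-dominance of $\lambda$ together with an $H'$-weight argument showing that the top-degree part of $\psi_v(\xi)$ is concentrated on $\mf{g}_2$-coordinates, and using the density of the $L$-orbit of $X$ in $\mf{g}_2$ (for an even Richardson $X$) to propagate the pointwise vanishing at the highest-weight vector $v$ to the full functional on $M_v$. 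This adapts Brylinski's original argument (Theorem 5.2 of \cite{Bry89}) for the principal case $P = B$, where $M_v$ is one-dimensional and the subtlety does not arise.
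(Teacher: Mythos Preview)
Your overall strategy coincides with the paper's: both pass through Frobenius reciprocity, use the line $N(s) = H + sX$ together with the identity $\exp(-sX)\cdot H = H + sX$, and invoke density of the Richardson orbit to pass from pointwise information at $X$ to global degree information on $\mf n_{\mf p}$. Your forward inclusion $\mc H^n \subseteq \mc F^n_X$ is correct and is essentially the easy direction in the paper as well.

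The genuine gap is in your reverse inclusion. You have correctly isolated the difficulty: from $X^{n+1}v = 0$ you only control the scalar polynomial $\langle\psi_v(\xi)(H+sX),\,v\rangle$, whereas what you need is vanishing of the full $V^P(\lambda)^*$-valued coefficient, which unwinds to $X^{n+1}w = 0$ for \emph{every} $w \in M_v$. Your proposed bridge does not close this. First, the assertion that the top-degree part of $\psi_v(\xi)$ is ``concentrated on $\mf g_2$-coordinates'' is unsupported: the homogeneous component $\psi_v^{(k)}(\xi)$ is a degree-$k$ polynomial on all of $\mf n_{\mf p} = \bigoplus_{j\geq 1}\mf g_{2j}$, and there is no $H'$-weight mechanism forcing it to depend only on the $\mf g_2$-piece. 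Second, you invoke $G$-dominance of $\lambda$, which is not a hypothesis of this theorem (only $P$-dominance is assumed). Third, even granting that $L\cdot X$ is dense in $\mf g_2$, $L$-equivariance relates $\psi_v^{(k)}(\xi)(l\cdot X)$ to $l\cdot\psi_v^{(k)}(l^{-1}\xi)(X)$; pairing against $v$ then produces a pairing against $l^{-1}\cdot v$ on the right, so you are back to needing control over all of $M_v$, not just $v$.

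The paper's proof is organized differently at exactly this point. Rather than proving the two inclusions separately from $P$-equivariance of $\psi_v$, it works with the $G$-equivariant section $f = \varphi^{-1}(\bar f)$ on all of $G/L$: one chooses $\bar f \in \mc H^n \setminus \mc H^{n-1}$ and shows directly that the polynomial $t\mapsto \exp(tX)\cdot\bar f$ has degree exactly $n$. The density argument (Lemma~5.7 analog) is used with $g$ ranging over $G$, not just $P$, to match the $\mc H$-degree with the maximal degree along translates of $N(t)$; the identification with $\deg\bigl(\exp(tX)\cdot\bar f\bigr)$ is then the Proposition~5.9 adaptation. Working at the $G$-level rather than only through the $P$-equivariant map $\psi_v$ is what the paper uses to avoid the $M_v$-versus-$v$ issue you ran into.
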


\begin{proof} We give a sketch of the proof; the details follow from arguments similar to those in the proofs of Lemmas 5.6 and 5.7 and Proposition 5.9 of \cite{Bry89}.

Choose $\bar{f} \in \irrepfilt{\lambda}{\mu}{P}{n} \setminus \, \irrepfilt{\lambda}{\mu}{P}{n - 1}$. We need to check that $X^n . \bar{f} \neq 0$ and $X^{n + 1}.  \bar{f} = 0$. Set $$f := \varphi^{-1} \bar{f} \in Hom_G \bigg(   \irrepg{\mu}^*   , \,   \cohomLfilt{0}{\lambda}{n \:}  \, \bigg) \, .$$

For $v \in \irrepgd{\mu}$ set $$f_v := f(v) \in  \cohomL{0}{\lambda}  \, .$$ Choose $v \in \irrepgd{\mu}$ such that $$f_v \in \cohomLfilt{0}{\lambda}{n \:} \, \setminus \, \cohomLfilt{0}{\lambda}{n-1 \:} $$ and set $$N(t) := H + tX \, ;$$ this is a line in the fiber of $G/L$ over $eP$. For $g \in G$ we may interpret $f_v (gP)$ as a regular function on the fiber of $G/L$ over $gP$.

By an argument similar to that of Lemma 5.7 in \cite{Bry89}, one checks that there exists $g \in G$ such that the degree of the polynomial $t \mapsto f_v (gP) \Big(  g. \big(  N(t)  \big)  \Big)$ is $n$. By $G$-equivariance this implies that the degree of the polynomial $t \mapsto f_{gv}(eP)(N(t))$ is $n$.

By an analog of Lemma 5.6 in \cite{Bry89}, we have that the degree of the polynomial $t \mapsto f_{gv}(N(t))$ is the same as the degree of the polynomial $$t \mapsto \big(\textrm{exp}(-tX).f_{gv} \big)(eP)(H) \, .$$ In particular, the degree of this polynomial is $n$. As the fiber of $G/L$ over $eP$ is isomorphic to $\irrepd{\lambda}{P}$ we obtain that $n$ is the maximum of the degrees of the $\irrepd{\lambda}{P}$-valued polynomials $\big(\textrm{exp}(-tX).f_{v} \big)(eP)(H)$ as $v$ runs over the elements of $\irrepgd{\mu}$. By an argument similar to that of Proposition 5.9 of \cite{Bry89}, this implies that the degree of the polynomial $  \textrm{exp}(tX). \big( \varphi(f)  \big) \, $ is also $n$. Since \begin{eqnarray*}  \textrm{exp}(tX). \big( \varphi(f) \big) & = & (1 \, + \, tX \, + \, t^2X^2/2 \, + \, \ldots \,) \, . \,\varphi(f)  \\
& = & (1 \, + \, tX \, + \, t^2X^2/2 \, +\, \ldots \,) \, . \, \bar{f}
\end{eqnarray*} we see that $X^n . \bar{f} \neq 0$ and $X^{m}.  \bar{f} = 0$ for all $m > n$, as desired.

\end{proof}





\begin{remark}\label{rem:pr} Let $pr: G/B \rightarrow G/P$ be the projection map. Let $M$ be a $B$-module and let $\eb{M}{P/B}$ be the sheaf of sections of the $P$-equivariant bundle $P \times^B M$ on $P/B$. Then, by Proposition III.8.1 of \cite{Ha}, we have $$R^i pr_* \, \eb{M}{B} \congsp \EB{  \Cohom{i}{P/B}{  \eb{M}{P/B}  }  }{P} \, .$$
\end{remark}

\begin{lemma} \label{lem:cotsheaf} For each $i \geq 0$ there are isomorphisms $$\COhom{i}{ G/B }{ \eB{ \symmp{p} \otimes \C_{ -\lambda } }{B} } \congsp \COhom{i}{ G/P }{   \shcotpush{ \lambda }{ P }   } \congsp \Cohom{i}{ \pcot{P} }{ p^* \eb{\lambda}{P} }$$ of $G$-modules. Furthermore, the first isomorphism is an isomorphism of graded $G$-modules, where the grading on the first module is obtained from the grading on $\symmp{p}$ and the grading on the second is obtained from the grading on $\shcotpush{ \lambda }{ P }$.
\end{lemma}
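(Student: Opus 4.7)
The plan is to prove both isomorphisms separately using Leray spectral sequences: the first via the projection $pr: G/B \to G/P$, and the second via the cotangent bundle projection $p: \pcot{P} \to G/P$.

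For the second isomorphism, I would observe that $p$ is an affine morphism (indeed, $\pcot{P} \cong G \times^P \mf{n}_\mf{p}$ is a vector bundle over $G/P$), so $R^j p_* \mc{F} = 0$ for all $j > 0$ and any quasi-coherent $\mc{F}$. The Leray spectral sequence for $p$ applied to $p^* \shF{\lambda}{P}$ therefore degenerates on the second page, giving
$$\Cohom{i}{\pcot{P}}{p^* \eb{\lambda}{P}} \congsp \Cohom{i}{G/P}{p_* p^* \shF{\lambda}{P}} \, = \, \Cohom{i}{G/P}{\shcotpush{\lambda}{P}}.$$

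For the first isomorphism, I would apply Remark \ref{rem:pr} with $M := \symmp{p} \otimes \C_{-\lambda}$, a $B$-module. The crucial observation is that $\symmp{p} = S(\mf{n}_\mf{p}^*)$ is in fact a $P$-module (not merely a $B$-module), so on the fiber variety $P/B$ the $P$-equivariant vector bundle $P \times^B (\symmp{p} \otimes \C_{-\lambda})$ splits as a tensor product $\symmp{p} \otimes \eb{\C_{-\lambda}}{P/B}$, where the first factor is the trivial $P$-equivariant bundle with fiber $\symmp{p}$. Consequently, for each $j \geq 0$,
$$\Cohom{j}{P/B}{\eb{\symmp{p} \otimes \C_{-\lambda}}{P/B}} \congsp \symmp{p} \otimes \Cohom{j}{P/B}{\eb{\C_{-\lambda}}{P/B}}.$$
Since $\lambda$ is $P$-dominant, Kempf vanishing on the flag variety $P/B$ of the Levi (applied $P$-equivariantly) gives that the right side is $\symmp{p} \otimes \irrepd{\lambda}{P}$ for $j = 0$ and vanishes for $j > 0$. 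Plugging into Remark \ref{rem:pr}, we obtain $R^0 pr_* \eB{\symmp{p} \otimes \C_{-\lambda}}{B} \cong \pcotpp{\lambda} \cong \shcotpush{\lambda}{P}$ (using Remark \ref{rem:function interpretation}) and $R^j pr_* = 0$ for $j > 0$. The Leray spectral sequence for $pr$ then degenerates to give
$$\Cohom{i}{G/B}{\eB{\symmp{p} \otimes \C_{-\lambda}}{B}} \congsp \Cohom{i}{G/P}{\shcotpush{\lambda}{P}},$$
as desired.

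Finally, for the grading compatibility: the $P$-module $\symmp{p}$ carries its natural polynomial grading, which is respected by all of the maps above (the projection formula, the pushforward via $pr$, and Kempf vanishing on $P/B$ are all compatible with the trivial grading action on the $\symmp{p}$ factor). The same grading induces the grading on $\shcotgr{\lambda}{P}{n}$ of Definition \ref{def:gr}, so the first isomorphism is graded.

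The argument is essentially a two-step Leray degeneration, and no step presents genuine difficulty; the single point that must be noted carefully is that $\symmp{p}$ is a $P$-module, which is what permits the splitting on $P/B$ and the application of Kempf vanishing on the Levi's flag variety — without this, one would only have a $B$-module and the cohomology on $P/B$ would not simplify.
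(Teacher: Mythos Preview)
Your proposal is correct and follows essentially the same route as the paper: both arguments use the Leray spectral sequence for $p$ (degenerating because $p$ is affine) for the second isomorphism, and for the first isomorphism both use the Leray spectral sequence for $pr: G/B \to G/P$ together with Kempf vanishing on $P/B \cong L/B_L$ and the projection formula, the key point being that $\symmp{p}$ is a $P$-module. The only cosmetic difference is that the paper first identifies $\eB{\symmp{p}}{B} \cong pr^* \eB{\symmp{p}}{P}$ and then computes $R^i pr_* \eb{\lambda}{B}$ separately before applying the projection formula, whereas you apply Remark~\ref{rem:pr} directly to $M = \symmp{p} \otimes \C_{-\lambda}$ and split on $P/B$; these are equivalent reorganizations of the same computation.
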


\begin{proof} By Remark \ref{rem:function interpretation} we have that $$\shcotpush{\lambda}{P} \congsp \pcotpp{\lambda} \, .$$ Let $pr: G/B \rightarrow G/P$ be the projection map. Then $$\eB{ \symmp{p} }{B} \congsp pr^* \eB{ \symmp{p} }{P} \, ,$$ since both are $G$-equivariant bundles on $G/B$ with fiber $\symmp{p}$. By Remark \ref{rem:pr}, we have that $$R^i pr_* \, \eb{  \lambda  }{B} \, = \, R^i pr_* \, \eb{  \C_{-\lambda}  }{B} \congsp \EB{  \Cohom{i}{P/B}{  \eb{  \C_{-\lambda}  }{P/B}  }  }{P} \, .$$

Since $P/B \congsp L/B_L$, we have $$\Cohom{0}{P/B}{  \eb{  \C_{-\lambda}  }{P/B}  } \congsp \Cohom0{L/B_L}{  \eb{  \C_{-\lambda}  }{L/B_L}  } \congsp \irrepd{\lambda}{P}$$ and $$\Cohom{i}{L/B_L}{  \eb{  \C_{-\lambda}  }{L/B_L}  } = 0 $$ for $i > 0$. Thus $$pr_* \, \eb{  \lambda  }{B} \congsp \eb{  \lambda  }{P}$$ and $$R^i pr_* \, \eb{  \lambda  }{B} = 0$$ for $i > 0$.

The Leray spectral sequence and the projection formula now imply the first isomorphism (note that all isomorphisms here preserve the grading). The second isomorphism follows from the Leray spectral sequence again, as $p$ is an affine morphism.

\end{proof}

Let $X$ denote the group ring of $\Lambda$ over $\mb{Z}$ written multiplicatively; ie, $X$ is the $\mb{Z}$-algebra with generators $\{    e^\lambda : \lambda \in \Lambda    \}$ and relations $e^{ \lambda_1 }  e^{ \lambda_2 } = e^{ \lambda_1 + \lambda_2 }$. For any finite-dimensional $B$-module $M$ set $$  \textrm{ch} \, M := \sum_{ \substack{ \textrm{weights} \\ \gamma \textrm{ of} \: M }} e^\gamma  \, ,$$ where the weights are summed with multiplicity. Also set $$\chi(M) \, := \, \sum_{i \geq 0} (-1)^i \, \textrm{ch} \, \Cohom{i}{G/B}{  \eb{M}{B}  } \in X$$ and set $$\chichar{M} \, := \, \sum_{ \substack{ \textrm{weights} \\ \gamma \textrm{ of} \: M }} \, \sum_{i \geq 0} (-1)^i \, \textrm{ch} \, \Cohom{i}{G/B}{  \eb{  \chara{\gamma}  }{B}  } \in X \, ,$$ where the weights are summed with multiplicity. By the additivity of Euler characteristic we have $\chi(M) = \chichar{M}$ for all finite-dimensional $B$-modules $M$.



\begin{proposition} \label{lem:charcomp} Assume that $$ \Cohom{i}{  \pcot{P}  }{  p^* \eb{\lambda}{P}  } = 0$$ for all $i > 0$. Then $$\sum_{n \geq 0} \textrm{dim } Hom_G \bigg(   \irrepgd{\mu} , \,  \COhom{0}{  G/P  }{  \shcotgr{\lambda}{P}{n}  }  \bigg) \, q^n = \qana{\mu}{\lambda} $$ for all $\mu \in D$.
\end{proposition}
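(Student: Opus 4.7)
The plan is to use the cohomology-vanishing hypothesis to replace the $H^0$ on $G/P$ by an Euler characteristic on $G/B$, and then extract the $\irrepgd{\mu}$-multiplicity via Bott's theorem. First, Lemma \ref{lem:cotsheaf} (whose first isomorphism respects the grading by $\symmpg{p}{n}$) translates the hypothesis $\Cohom{i}{\pcot{P}}{p^* \eb{\lambda}{P}} = 0$ into $\Cohom{i}{G/B}{\eB{\symmpg{p}{n} \otimes \chara{-\lambda}}{B}} = 0$ for all $i > 0$ and $n \geq 0$, together with a $G$-module isomorphism $\Cohom{0}{G/P}{\shcotgr{\lambda}{P}{n}} \congsp \Cohom{0}{G/B}{\eB{\symmpg{p}{n} \otimes \chara{-\lambda}}{B}}$. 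Because higher cohomology on $G/B$ vanishes, the character of this $H^0$ equals the Euler characteristic $\chi(\symmpg{p}{n} \otimes \chara{-\lambda})$.

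Next, by additivity $\chi(M) = \chichar{M}$ it suffices to handle one-dimensional $B$-modules $\chara{\gamma}$. Here Bott's theorem, applied in the paper's convention that $\Cohom{0}{G/B}{\eb{\nu}{B}} \cong \irrepgd{\nu}$ for $\nu \in D$, gives that the multiplicity of $\irrepgd{\mu}$ in $\chi(\chara{\gamma})$ equals $(-1)^{l(w)}$ when $\gamma = -w * \mu$ and is zero otherwise. Summing over the $B$-weights of any finite-dimensional $B$-module $M$ therefore yields the identity
\begin{equation*} \dim \textrm{Hom}_G \bigl( \irrepgd{\mu},\, \chi(M) \bigr) \;=\; \sum_{w \in W} (-1)^{l(w)} \, \dim M_{-w * \mu} \, . \end{equation*}

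Applying this with $M = \symmpg{p}{n} \otimes \chara{-\lambda}$ and summing over $n$ weighted by $q^n$, one uses the generating function
\begin{equation*} \sum_{n \geq 0} q^n \, \textrm{ch} \, \symmpg{p}{n} \;=\; \prod_{\beta \in \Delta^+ \setminus \Delta^+_P} (1 - q e^{-\beta})^{-1} \end{equation*}
to read off the coefficient of $e^{-w * \mu + \lambda}$ as $\pkos(w * \mu - \lambda)$, so that the alternating sum over $w$ collapses to $\qana{\mu}{\lambda}$ as desired. The main obstacle is convention bookkeeping in the Bott step: the paper's normalization of $\eb{\lambda}{B}$ (fiber $\chara{-\lambda}$, with $H^0 = \irrepgd{\lambda}$ for dominant $\lambda$) together with the substitution $w \mapsto w^{-1}$ in the shifted Weyl action must line up precisely so that the argument of $\pkos$ comes out as $w * \mu - \lambda$ and not some reflected variant; once this is verified, Steps 1 and 3 are direct.
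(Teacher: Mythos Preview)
Your proposal is correct and follows essentially the same route as the paper's proof: both invoke Lemma~\ref{lem:cotsheaf} to pass to $G/B$, use the vanishing hypothesis to replace $H^0$ by the Euler characteristic $\chi\big(\symmpg{p}{n}\otimes\chara{-\lambda}\big)$, and then identify the $\irrepgd{\mu}$-multiplicity with the degree-$n$ coefficient of $\qana{\mu}{\lambda}$. The only difference is that the paper compresses the last step into the phrase ``a standard computation now gives,'' whereas you spell it out via Bott's theorem and the generating function for $\textrm{ch}\,\symmpg{p}{n}$; your convention bookkeeping (the $w\mapsto w^{-1}$ substitution and the sign of the weight $-w*\mu$) is handled correctly.
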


\begin{proof} By Lemma \ref{lem:cotsheaf} we have isomorphisms $$\Cohom{i}{ \pcot{P} }{ p^* \eb{\lambda}{P} }  \congsp \cohom{i}{ G/P }{ \shcotpush{\lambda}{P} }  \congsp \COhom{i}{ G/B }{ \eB{ \symmp{p} \otimes \C_{ -\lambda } }{B} }  $$ of graded $G$-equivariant sheaves for all $i$. Thus we need to show that $$\sum_{n \geq 0} \textrm{dim }    \textrm{Hom}_G \Bigg(  \irrepgd{\mu}  ,  \fcohsymdegs{\lambda}{p}{0}{n}  \Bigg) \,  q^n \, = \, m_\mu^{P, \lambda} (q) \, .$$

By the vanishing assumption, $$  \COhom{i}{ G/B }{ \eB{ \symmp{p} \otimes \C_{ -\lambda } }{B} }  = 0  $$ for all $i > 0$. Hence $$\textrm{ch} \left[ \fcohsymdegs{\lambda}{p}{0}{n} \right] \, = \, \chi \big(  \symmpg{p}{n} \otimes \chara{-\lambda}  \big)$$ for all $n \geq 0$.

For any $n \geq 0$ let $\pkoscoeff{n}$ be the degree-$n$ coefficient of the polynomial $\pkos$ and let $\qanacoeff{\mu}{\lambda}{n}$ be the degree-$n$ coefficient of $\qana{\mu}{\lambda}$ (recall the definition of $\pkos$ from section \ref{sec:gendefs}). For all $n \geq 0$ a standard computation now gives $$  \chi \big(  \symmpg{p}{n} \otimes \chara{-\lambda}  \big) \, = \,  \sum_{\gamma \in D} \qanacoeff{\gamma}{\lambda}{n} \chi \big( \chara{-\gamma} \big) \, = \,  \sum_{\gamma \in D} \qanacoeff{\gamma}{\lambda}{n} \textrm{ch} \big(\irrepgd{\gamma}\big)   \, .$$

That is, for all $\gamma \in D$ the multiplicity of $\irrepgd{\gamma}$ in $$\fcohsymdegs{\lambda}{p}{0}{n}$$ is $\qanacoeff{\gamma}{\lambda}{n}$, as desired.

\end{proof}

\bigskip
We can now prove Theorem \ref{th:brygen}.


\begin{proof} (of Theorem \ref{th:brygen}) Recall that we want to show the following: If $$\Cohom{  i  }{  \pcot{P}  }{  p^* \eb{\lambda}{P}  } = 0$$ for all $i >0$, then $$r_\mu^{X, \lambda} (q) \, = \, m_\mu^{P, \lambda} (q) \, .$$

First note that, by Theorem \ref{th:brygenfilt}, we have $$r_\mu^{X, \lambda} (q) \, = \, \sum_{n \geq 0} \textrm{dim} \left ( \frac{   \irrepfilt{\lambda}{\mu}{P}{n}   }{   \irrepfilt{\lambda}{\mu}{P}{n-1}   } \right) q^n \, . $$ By Proposition \ref{lem:charcomp} it now suffices to show that 
\begin{equation*} \label{eq:filt} \textrm{dim} \left ( \frac{   \irrepfilt{\lambda}{\mu}{P}{n}   }{   \irrepfilt{\lambda}{\mu}{P}{n-1}   } \right) \, = \, \textrm{dim } Hom_G \bigg(   \irrepgd{\mu} , \,  \COhom{0}{  G/P  }{  \shcotgr{\lambda}{P}{n}  }  \bigg)
\end{equation*}
for all $n \geq 0$ and $\mu \in D$.

Fix $\mu \in D$. Recall that $$\irrepfilt{\lambda}{\mu}{P}{n} \, = \, \varphi \: \bigg( Hom_G \Big(   \irrepg{\mu}^*   , \,   \cohomLfilt{0}{\lambda}{n \:}  \, \Big) \bigg) \subseteq \Lhi{\lambda}{\mu}{P} \, .$$ Using that $\varphi$ is an isomorphism and that the functor $$\textrm{Hom}_G \, (  \irrepgd{\mu}  , \, \bullet \,  )$$ is exact,  we have
\begin{eqnarray*} \textrm{dim} \left ( \frac{   \irrepfilt{\lambda}{\mu}{P}{n}   }{   \irrepfilt{\lambda}{\mu}{P}{n-1}   } \right) & = & \textrm{dim}  \left(  \frac{ Hom_G \Big(   \irrepg{\mu}^*   , \,   \cohomLfilt{0}{\lambda}{n \:}  \, \Big) }{ Hom_G \Big(   \irrepg{\mu}^*   , \,   \cohomLfilt{0}{\lambda}{n-1 \:}  \, \Big) }    \right) \\
   & = & \textrm{dim } Hom_G \left(   \irrepg{\mu}^*   , \,   \frac{  \cohomLfilt{0}{\lambda}{n \:}  }{  \cohomLfilt{0}{\lambda}{n-1 \:}  }  \, \right)  \, .
\end{eqnarray*}

Finally, note that by Proposition \ref{pr:grL} and the cohomology vanishing assumption, an easy induction shows that for all $n \geq 0$ there is an isomorphism
\begin{equation*} \label{eq:isompq} \frac{  \cohomLfilt{0}{\lambda}{n \:}  }{  \cohomLfilt{0}{\lambda}{n-1 \:}  } \congsp \COhom{0}{  G/P  }{  \shcotgr{\lambda}{P}{n}  }
\end{equation*} of $G$-modules. This completes the proof.


\end{proof}

\section{Cohomology of Flag Varieties} \label{sec:cohom}
\subsection{Overview and Background} \label{sec:vanintro}

\subsubsection{Definitions}

Let $P$ be a standard parabolic subgroup of $G$ and let $L$ be the Levi subgroup of $P$ containing $T$. Recall that $\Delta_P$, $\Delta^+_P$, and $\pi_P$ denote the roots, the positive roots, and the simple roots, respectively, of $L$.  Let $\rho_P$ denote the half-sum of all positive roots of $L$. Let $W_P \subseteq W$ denote the Weyl group of $L$. 

Recall also that for $\lambda \in \Lambda$ we say that $\lambda$ is $P$-dominant (resp. $P$-regular dominant) if if $\pair{\lambda}{\alpha} \geq 0$ (resp. $\pair{\lambda}{\alpha} > 0$) for all $\alpha \in \pi_P$. For $P$-dominant $\lambda$ we have the irreducible $P$-module $\irrep{\lambda}{P}$ of highest weight $\lambda$.

In this section we assume without loss of generality that all parabolics are standard.
%

\subsubsection{ } We begin by providing an outline of the results in this section. For any parabolic $P \subseteq G$ and $\lambda \in \Lambda$ let $r_P : G \times^B \nil{p} \rightarrow G/B$ be the bundle map and consider the cohomology groups $$\pc{i}{\lambda}{p} := \Cohom{i}{  G \times^B \nil{p}  }{  r_P^* \, \eb{\lambda}{B}  } \, .$$ By Remark \ref{rem:function interpretation} and Lemma \ref{lem:cotsheaf}, when $\lambda$ is $P$-dominant we have $$\pc{i}{\lambda}{p} \congsp \pcotcoh{\lambda}{P} \, ,$$ where $p_P : \pcot{P} \rightarrow G/P$ is the bundle map.

Broer \cite{Bro94} showed that these cohomology groups vanish for $i > 0$ in the following cases (see Theorem \ref{th:broer} and Propositions \ref{pr:broermin} and \ref{pr:broer} below):
\begin{itemize}
\item $P = B$ and $\cht{\lambda} = 0$ (see Lemma \ref{lem:cht0}(i) below for a characterization of the weights for which cht $= 0$). In particular, we have higher cohomology vanishing when $P = B$ and $\lambda \in D$.
\item $P$ is a minimal parabolic corresponding to a short simple root and $\lambda \in D$.
\item $P$ is any parabolic and $\lambda$ is any weight such that $\pair{\lambda}{\alpha} = -1$ for some $\alpha \in \pi_P$ (in fact, we have cohomology vanishing for all $i$ in this case).
\item $P$ is any parabolic and $\chara{\lambda}$ is a 1-dimensional $P$-module, i.e. $\pair{\lambda}{\alpha} = 0$ for all $\alpha \in \pi_P$.
\end{itemize}

In this chapter we show that these cohomology groups vanish for $i > 0$ in the following additional cases: 
\begin{itemize}
\item $P$ is any minimal parabolic and $\lambda \in D$ (section \ref{sec:minparvan}).
\item $P$ is any parabolic and $\lambda = \mu - 2 \rho_P$ for some $P$-regular dominant $\mu \in D$ (section \ref{subsec:P-reg dominant vanishing}).
\item $P$ is any parabolic in type $A$ and $\lambda \in D$ is regular (section \ref{sec:frobenius splitting}).
\end{itemize}

In section \ref{subsec:P-reg dominant vanishing} we also collect some corollaries that give vanishing for some higher cohomologies in certain cases.

\subsubsection{Background} \label{sec:rhovan} \quad 

\begin{definition} \label{def:cht} For $\gamma \in \Lambda$ let $\gamma^+$ denote the unique dominant weight in the Weyl group orbit of $\gamma$. By \cite{Bro94}, section 3, there is a unique dominant weight $\gamma^\star$ such that (a) $\gamma^\star \geq \gamma$ and (b) $\gamma^\star \leq \mu$ for all $\mu \in D$ such that $\mu \geq \gamma$. The \textbf{combinatorial height} of $\lambda$ is given by $$\hspace{-.45in} \cht{\lambda} \, := \, \textrm{max} \left\{ m : \textrm{there is a chain of dominant weights } \gamma^\star =: \gamma_0 < \gamma_1 < \cdots < \gamma_m := \gamma^+ \right\}.$$
\end{definition}

The following is a more natural way of classifying the weights of combinatorial height 0. Part (i) is due to Broer (\cite{Bro97}, Proposition 2), and (ii) follows readily from (i).

\begin{lemma} \label{lem:cht0} \quad
	\begin{enumerate} [(i)]
	\item Let $\lambda \in \Lambda$. Then $\cht{\lambda} = 0$ iff $\lambda(\beta^\vee) \geq -1$ for all $\beta \in \Delta^+$. In particular, $\cht{\lambda} = 0$ for all $\lambda \in D$.
	\item Let $\lambda \in \Lambda$ with $\cht{\lambda} = 0$ and let $\mu \in D$. Then $\cht{\lambda + \mu} = 0$.
	\end{enumerate}
\end{lemma}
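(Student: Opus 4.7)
The plan is to deduce both parts from the coroot characterization of $\cht = 0$. Part (i) itself is Broer's Proposition 2 in \cite{Bro97}, so I would simply invoke that reference for the main equivalence and then verify the ``in particular'' clause and part (ii) as short consequences.

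For the ``in particular'' statement in (i), I would use the standard fact that dominant weights pair non-negatively not only with the simple coroots but with every positive coroot. That is, for every $\mu \in D$ and every $\beta \in \Delta^+$ one has $\pair{\mu}{\beta} \geq 0$. (One way to see this quickly: write $\beta^\vee$ in terms of the simple coroots $\alpha^\vee$ for $\alpha \in \pi$; a well-known fact is that in this expansion all coefficients are non-negative, so the claim reduces to $\pair{\mu}{\alpha} \geq 0$ for $\alpha \in \pi$, which is exactly dominance.) Applied to $\lambda \in D$ this gives $\pair{\lambda}{\beta} \geq 0 \geq -1$ for all $\beta \in \Delta^+$, so by the equivalence just cited, $\cht{\lambda} = 0$.

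For part (ii), I would apply (i) in both directions. The hypothesis $\cht{\lambda} = 0$ gives $\pair{\lambda}{\beta} \geq -1$ for every $\beta \in \Delta^+$, and the observation recalled above applied to $\mu \in D$ gives $\pair{\mu}{\beta} \geq 0$ for every $\beta \in \Delta^+$. Adding, $\pair{\lambda+\mu}{\beta} \geq -1$ for every positive root $\beta$, and (i) then concludes $\cht{\lambda + \mu} = 0$.

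There is no real obstacle here; the content is entirely in Broer's characterization. The only ingredient beyond (i) is the positivity lemma $\pair{\mu}{\beta} \geq 0$ for $\mu \in D$ and $\beta \in \Delta^+$, and this is standard.
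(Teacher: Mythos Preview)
Your proposal is correct and matches the paper's approach exactly: the paper simply attributes (i) to Broer (\cite{Bro97}, Proposition~2) and states that (ii) follows readily from (i), which is precisely the argument you have spelled out. Your filling-in of the ``readily'' step---adding $\pair{\lambda}{\beta} \geq -1$ and $\pair{\mu}{\beta} \geq 0$ for $\beta \in \Delta^+$---is the intended deduction.
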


\begin{theorem} \label{th:broer} (Broer, \cite{Bro94} Theorem 3.9) 
	\begin{enumerate}[(i)]
	\item Let $P \subseteq G$ be any parabolic subgroup. Then $\pc{i}{\lambda}{p} = 0$ for all $i > 0$ and $\lambda \in D$ such that $\chara{\lambda}$ is a $P$-module (i.e., $\pair{\lambda}{\alpha} = 0$ for all $\alpha \in \pi_P$). 
	\item Choose $\gamma \in \Lambda$. Then $\cht{\gamma} = 0$ iff $\fc{i}{\gamma} = 0$ for all $i > 0$.
	\item Choose $\gamma \in \Lambda$. Then $\fc{i}{\gamma} = 0$ for all $i > \cht{\gamma}$.

	\end{enumerate}

\end{theorem}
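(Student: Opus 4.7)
The plan is to prove (iii) and the forward direction of (ii) jointly by induction on $\cht{\gamma}$, then to deduce (i) by reducing to a Springer-resolution computation on $G/P$. The essential simplification throughout is that $p_B : \fcot \to G/B$ is an affine vector bundle, so Leray collapses to give
\[
\fc{i}{\gamma} \;\cong\; \bigoplus_{n \geq 0} \Cohom{i}{G/B}{\eb{\gamma}{B} \otimes \eB{S^n(\mf{n}^*)}{B}},
\]
reducing everything to Borel--Weil--Bott style cohomology on the flag variety. For the base case $\cht{\gamma} = 0$, Lemma \ref{lem:cht0}(i) gives $\pair{\gamma}{\beta^\vee} \geq -1$ for every positive root $\beta$, so $\gamma + \rho$ lies in the closed dominant chamber and Kempf vanishing handles the $n = 0$ summand directly. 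For higher $n$, I would use the Springer resolution $\mu : \fcot \to \mc{N}$ together with Hinich's theorem that $\mu_* \mc{O}_{\fcot} = \mc{O}_{\mc{N}}$ and $R^i \mu_* \mc{O} = 0$ for $i > 0$; a twisted Grauert--Riemenschneider argument, with $\cht{\gamma} = 0$ furnishing precisely the needed positivity on $\eb{\gamma}{B}$, yields $R^i \mu_* (p_B^* \eb{\gamma}{B}) = 0$ for $i > 0$. Since $\mc{N}$ is affine, this gives $\fc{i}{\gamma} = 0$ for $i > 0$.

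For the inductive step, assume $\cht{\gamma} > 0$. Lemma \ref{lem:cht0}(i) provides a simple root $\alpha$ with $\pair{\gamma}{\alpha^\vee} \leq -2$. Let $s_\alpha$ denote the corresponding simple reflection, $P_\alpha$ the minimal parabolic, and $\pi_\alpha : G/B \to G/P_\alpha$ the associated $\mb{P}^1$-fibration. Applying $R\pi_{\alpha *}$ to $\eb{\gamma}{B}$ and using fiberwise Borel--Weil--Bott, one obtains a $B$-equivariant short exact sequence of line bundles on $G/B$ of the form
\[
\shortexact{\eb{\gamma}{B}}{\mc{E}}{\eb{s_\alpha * \gamma}{B}}
\]
with $s_\alpha * \gamma = s_\alpha(\gamma + \rho) - \rho$ and $\mc{E}$ an intermediate sheaf pulled back from $G/P_\alpha$ (and thus enjoying the needed cohomology vanishing by an application of Theorem \ref{th:broer}(i) to $P_\alpha$). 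Pulling back along $p_B$ and taking the long exact sequence, the resulting vanishing of $H^i(\mc{E})$ in the relevant range combined with the identity $\cht{s_\alpha * \gamma} = \cht{\gamma} - 1$ (direct from Definition \ref{def:cht}) and the induction hypothesis gives $\fc{i}{\gamma} = 0$ for $i > \cht{\gamma}$. The converse direction of (ii) then comes by iterating the reduction all the way down to the base case and exhibiting, via explicit Borel--Weil--Bott, a non-zero class in $\fc{\cht{\gamma}}{\gamma}$.

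For (i), since $\chara{\lambda}$ extends to $P$ the line bundle $\eb{\lambda}{B}$ descends to $\eb{\lambda}{P}$ on $G/P$; hence $r_P^* \eb{\lambda}{B} \cong q^* p_P^* \eb{\lambda}{P}$, where $q : G \times^B \nil{p} \to G \times^P \nil{p} = \pcot{P}$ is a fibration with fibers $P/B \cong L/B_L$. Kempf vanishing on these Levi flag-variety fibers gives $Rq_* \mc{O} = \mc{O}$, so the projection formula yields $\pc{i}{\lambda}{p} \cong \pcotcoh{\lambda}{P}$. The right-hand side vanishes for $i > 0$ by an argument parallel to the base case above, applied to the generalized Springer-type map $\pcot{P} \to \overline{G \cdot \nil{p}} \subseteq \mc{N}$: Grauert--Riemenschneider together with normality of the relevant nilpotent-orbit closure produces the higher direct-image vanishing, and affineness of the target completes the argument.

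The main obstacle is the base case of the induction: establishing $R^i \mu_*(p_B^* \eb{\gamma}{B}) = 0$ for $i > 0$ in the full generality of weights with $\cht{\gamma} = 0$ (rather than merely $\gamma \in D$) requires careful Grauert--Riemenschneider-style analysis of the Springer resolution and is the technical heart of the proof. Once this input is in place, the inductive step and part (i) follow relatively cleanly from standard projection-formula and long-exact-sequence manipulations.
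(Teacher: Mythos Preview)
The paper does not prove this theorem; it is quoted from Broer \cite{Bro94} as background, so there is nothing in the paper to compare against directly. Your high-level plan---induction on $\cht{\gamma}$ with a Grauert--Riemenschneider base case---is indeed Broer's strategy, but the inductive step as you have written it does not work.

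The short exact sequence you claim cannot exist. If $\pair{\gamma}{\alpha} = -n \leq -2$, then on each $\mb{P}^1$-fiber of $\pi_\alpha : G/B \to G/P_\alpha$ the line bundle $\eb{\gamma}{B}$ has degree $-n$ and $\eb{s_\alpha * \gamma}{B}$ has degree $n-2$, so any extension $\mc{E}$ has fiberwise degree $-2$ and is never a pullback from $G/P_\alpha$. Moreover, applying $R\pi_{\alpha *}$ to $\eb{\gamma}{B}$ produces sheaves on $G/P_\alpha$, not a short exact sequence on $G/B$. What actually runs the induction (and what the paper records for later use) is the Koszul sequence of Remark~\ref{rem:shortexact} combined with Proposition~\ref{pr:broer}: for the minimal parabolic $P_\alpha$ one has $\pc{i}{\gamma}{p_\alpha} \cong \pc{i-1}{s_\alpha * \gamma}{p_\alpha}$ when $\pair{\gamma}{\alpha} \leq -2$, and the Koszul long exact sequence links $\fc{i}{\gamma}$ to $\fc{i}{\gamma + \alpha}$ and $\pc{i}{\gamma}{p_\alpha}$. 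Two further problems: your assertion that $\cht{s_\alpha * \gamma} = \cht{\gamma} - 1$ is ``direct from Definition~\ref{def:cht}'' is wrong---Proposition~\ref{pr:comb}(viii) gives only the strict inequality, which suffices for (iii) but undermines your sketch of the converse in (ii); and your argument for (i) appeals to normality of $\overline{G \cdot \nil{p}}$, which fails for Richardson orbit closures in general, and also invokes (i) itself (for $P_\alpha$) inside the inductive step for (iii), which is circular as stated.
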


\bigskip

\subsection{Vanishing for minimal parabolics}
\subsubsection{Some Combinatorics} \label{subsec:combinatorics} 

We collect a few combinatorial results.

\begin{proposition} \label{pr:comb}(Thomsen, \cite{T00} Propositions 1 and 3 and Corollary 2) Let $\lambda \in \Lambda$, $\alpha \in \pi$, and $\beta \in \Delta^+$.
	\begin{enumerate}[(i)]
	\item If $\pair{\lambda}{\beta} \geq 0$ then $\lambda^+ < (\lambda + \beta)^+$.
	\item If $\pair{\lambda}{\beta} = -1$ then $\lambda^+ = (\lambda + \beta)^+$.
	\item If $\pair{\lambda}{\beta} < -1$ then $\lambda ^+ > (\lambda + \beta)^+$.
	\item If $\pair{\lambda}{\alpha} < 0$ then $\lambda^\star = (\lambda + \alpha)^\star$.
	\item If $\pair{\lambda}{\alpha} = -1$ then $\cht{\lambda} = \cht{\lambda + \alpha}$.
	\item If $\pair{\lambda}{\alpha} \leq -2$ then $\cht{\lambda} > \cht{\lambda + \alpha}$.
	\item If $\pair{\lambda}{\alpha} \leq 0$ then $\cht{\lambda} \geq \cht{s_\alpha \lambda}$.
	\item If $\pair{\lambda}{\alpha} \leq -2$ then  $\cht{\lambda} > \cht{s_\alpha * \lambda}$.
	\end{enumerate}
\end{proposition}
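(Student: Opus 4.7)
The eight assertions split naturally into three groups: (i)--(iii) concern only the dominant Weyl translate $\lambda^+$; (iv) concerns only the ``ceiling'' $\lambda^\star$; and (v)--(viii) are consequences of the first four via chain-extension arguments for the combinatorial height.

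For (i)--(iii), the key identity is $s_\beta \mu = \mu - \pair{\mu}{\beta}\beta$. Part (ii) is immediate, since $\pair{\lambda}{\beta} = -1$ forces $s_\beta\lambda = \lambda + \beta$, placing $\lambda$ and $\lambda + \beta$ in a common $W$-orbit. For (i), I would pick $w \in W$ with $w\lambda = \lambda^+$ and consider $w(\lambda+\beta) = \lambda^+ + w\beta$. If $w\beta \in \Delta^+$, then $\lambda^+ + w\beta > \lambda^+$ in the root order, and since $(\lambda+\beta)^+$ majorizes every element of its orbit, one concludes $(\lambda+\beta)^+ > \lambda^+$. The case $w\beta \in \Delta^-$ can only arise when $\pair{\lambda^+}{w\beta} = \pair{\lambda}{\beta} = 0$ (using dominance of $\lambda^+$ and invariance of the pairing under $w$); then $w\beta$ is orthogonal to $\lambda^+$, so $s_{w\beta} \in W_{\lambda^+}$, and replacing $w$ by $s_{w\beta} w$ yields a new element with the same image on $\lambda$ but with $-w\beta \in \Delta^+$. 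Part (iii) is symmetric: choose $w$ with $w(\lambda+\beta) = (\lambda+\beta)^+$ and arrange $w\beta \in \Delta^-$ by the same adjustment (the boundary case this time is $\pair{\lambda}{\beta} = -2$).

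For (iv), write $\lambda^\star - \lambda = \sum_{\gamma \in \pi} n_\gamma \gamma$ with $n_\gamma \in \mb Z_{\geq 0}$, and pair with $\alpha^\vee$: this yields $\pair{\lambda^\star}{\alpha} - \pair{\lambda}{\alpha} = 2 n_\alpha + \sum_{\gamma \neq \alpha} n_\gamma \pair{\gamma}{\alpha}$. Since $\pair{\gamma}{\alpha} \leq 0$ for $\gamma \neq \alpha$ simple and $\pair{\lambda^\star}{\alpha} \geq 0 > \pair{\lambda}{\alpha}$, the left side is strictly positive and the remaining sum is $\leq 0$, forcing $n_\alpha \geq 1$. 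Hence $\lambda^\star - \alpha \geq \lambda$ in the root order, so $\lambda^\star \geq \lambda + \alpha$, giving $(\lambda+\alpha)^\star \leq \lambda^\star$; the reverse inequality follows from minimality of $\lambda^\star$ since $(\lambda+\alpha)^\star$ is a dominant weight $\geq \lambda + \alpha \geq \lambda$.

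Parts (v)--(viii) follow formally from the first four. Part (v) combines $s_\alpha\lambda = \lambda + \alpha$ (giving $\lambda^+ = (\lambda+\alpha)^+$) with (iv). For (vi), (iii) gives $\lambda^+ > (\lambda+\alpha)^+$ while (iv) gives $\lambda^\star = (\lambda+\alpha)^\star$, so any maximal chain for $\lambda+\alpha$ extends by the single step $(\lambda+\alpha)^+ < \lambda^+$ to a strictly longer chain for $\lambda$. For (vii), $s_\alpha\lambda \geq \lambda$ in the root order forces $\lambda^\star \leq (s_\alpha\lambda)^\star$ by minimality, while $\lambda^+ = (s_\alpha\lambda)^+$ is automatic, so any maximal chain for $s_\alpha\lambda$ embeds (possibly with one extra step prepended) into one for $\lambda$. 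For (viii), the direct computation $s_\alpha(\lambda+\alpha) = \lambda - (\pair{\lambda}{\alpha}+1)\alpha = s_\alpha*\lambda$ shows $(s_\alpha*\lambda)^+ = (\lambda+\alpha)^+$, which is strictly less than $\lambda^+$ by (iii); combined with $s_\alpha*\lambda = \lambda + (k-1)\alpha \geq \lambda$ (where $k := -\pair{\lambda}{\alpha} \geq 2$), which forces $\lambda^\star \leq (s_\alpha*\lambda)^\star$, appending $\lambda^+$ to a maximal chain for $s_\alpha*\lambda$ yields the strict inequality $\cht{\lambda} > \cht{s_\alpha*\lambda}$.

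The main obstacle is the choice of $w$ in (i) and (iii): the reduction from a trivial observation about Weyl orbits to a strict dominance-order inequality requires care precisely when $\pair{\lambda}{\beta}$ sits at the boundary ($0$ in (i), $-2$ in (iii)), and the essential point is that reflections in roots orthogonal to $\lambda^+$ (respectively $(\lambda+\beta)^+$) always lie in its stabilizer.
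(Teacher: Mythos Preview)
The paper does not prove this proposition at all; it simply quotes the result from Thomsen \cite{T00} (Propositions 1 and 3 and Corollary 2). Your argument is therefore strictly more than the paper offers, and it is correct.

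A brief comparison with what one finds in Thomsen: your treatment of (i)--(iii) via the stabilizer trick (replacing $w$ by $s_{w\beta}w$ when $w\beta$ lands on the wrong side and is orthogonal to the dominant representative) is essentially the standard proof, and your handling of the boundary cases $\pair{\lambda}{\beta}=0$ and $\pair{\lambda}{\beta}=-2$ is clean. Your argument for (iv) --- pairing $\lambda^\star - \lambda$ with $\alpha^\vee$ and using the sign of the off-diagonal Cartan entries to force $n_\alpha \geq 1$ --- is a nice direct computation. The deductions of (v)--(viii) by chain extension are exactly how one expects these to follow; in (viii) the observation $s_\alpha * \lambda = s_\alpha(\lambda+\alpha)$, so that $(s_\alpha*\lambda)^+ = (\lambda+\alpha)^+$, is the right identification to make (iii) applicable. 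Nothing is missing.
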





\bigskip

\begin{lemma} \label{cor:shortroot} $\cht{\beta + \mu} = 0$ for all short roots $\beta$ and $\mu \in D$. 
\end{lemma}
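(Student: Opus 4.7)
The plan is to verify the criterion of Lemma \ref{lem:cht0}(i), namely that $(\beta + \mu)(\gamma^\vee) \geq -1$ for every $\gamma \in \Delta^+$. Since $\mu \in D$ and every positive coroot $\gamma^\vee$ lies in the nonnegative integer span of the simple coroots, we have $\mu(\gamma^\vee) \geq 0$ for all such $\gamma$. The problem therefore reduces to the uniform bound $\beta(\gamma^\vee) \geq -1$ for every $\gamma \in \Delta^+$, where $\beta$ is a (positive) short root.

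The case $\gamma = \beta$ is immediate: $\beta(\beta^\vee) = 2 \geq -1$. For $\gamma \neq \beta$ in $\Delta^+$ (so in particular $\gamma \neq \pm \beta$), the key ingredients are the standard root-system bound $\pair{\beta}{\gamma} \cdot \pair{\gamma}{\beta} \in \{0, 1, 2, 3\}$ together with the identity
\[
\pair{\beta}{\gamma}^2 \;=\; \pair{\beta}{\gamma} \cdot \pair{\gamma}{\beta} \cdot \frac{(\beta, \beta)}{(\gamma, \gamma)}.
\]
Shortness of $\beta$ forces $(\beta, \beta)/(\gamma, \gamma) \leq 1$, and hence $\pair{\beta}{\gamma}^2 \leq 3$. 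Since the Cartan integer $\pair{\beta}{\gamma}$ is in $\mathbb{Z}$, we conclude $\pair{\beta}{\gamma} \in \{-1, 0, 1\}$, which gives the desired inequality $\beta(\gamma^\vee) \geq -1$.

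There is no real obstacle here; the argument is a direct application of Lemma \ref{lem:cht0}(i) combined with elementary root-system inequalities that control Cartan integers involving a short root. The only caveat is that one must read $\beta$ as a positive short root: if $\beta$ were negative and one took $\gamma = -\beta \in \Delta^+$, then $\beta(\gamma^\vee) = -2$ and the assertion would fail for $\mu = 0$.
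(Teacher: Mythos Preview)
Your proof is correct and follows essentially the same route as the paper: invoke Lemma~\ref{lem:cht0} together with the root-system fact that $\pair{\beta}{\gamma} \geq -1$ for every $\gamma \in \Delta^+$ when $\beta$ is short. Your caveat that $\beta$ must be read as a \emph{positive} short root is well taken; the paper's statement is imprecise on this point, though every application of the lemma in the paper is indeed to positive short roots.
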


\begin{proof} This is immediate from Lemma \ref{lem:cht0} and the fact that $\pair{\beta}{\gamma} \geq -1$ for all $\gamma \in \Delta^+$.
\end{proof}

\begin{lemma} \label{lem:chtsimple} (Broer, \cite{Bro94}) Assume that $G$ is simple. Let $\alpha_1, \, \alpha_2, \, \ldots , \, \alpha_k$ be any (nonempty) collection of short simple orthogonal roots. Then $$\cht{  \sum_{i=1}^k \alpha_i  } = k - 1 \, .$$
\end{lemma}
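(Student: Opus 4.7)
The plan is to establish the two bounds $\cht{\gamma} \geq k-1$ and $\cht{\gamma} \leq k-1$ separately for $\gamma := \sum_{i=1}^k \alpha_i$. The base case $k = 1$ is immediate from Lemma \ref{cor:shortroot} applied with $\mu = 0 \in D$, so the real content is for $k \geq 2$.

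For the lower bound, the strategy is to construct a sequence of simple roots $\beta_1, \ldots, \beta_m \in \pi \setminus \{\alpha_1, \ldots, \alpha_k\}$ so that the running sums $\lambda_0 := \gamma$ and $\lambda_j := \lambda_{j-1} + \beta_j$ satisfy $\pair{\lambda_{j-1}}{\beta_j} \leq -1$ at each step, at least $k-1$ of these pairings are $\leq -2$, and $\lambda_m$ is a dominant weight (hence of combinatorial height zero by Lemma \ref{lem:cht0}). By Proposition \ref{pr:comb}(v) each step with pairing $-1$ preserves $\cht{\cdot}$, and by \ref{pr:comb}(vi) each step with pairing $\leq -2$ strictly decreases it; telescoping then yields $\cht{\gamma} \geq k-1$. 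When the $\alpha_i$'s occupy alternating vertices along a path in the Dynkin diagram (e.g.\ $\alpha_1, \alpha_3, \ldots, \alpha_{2k-1}$ in $A_{2k-1}$), the intermediate simple roots directly serve as the $\beta_j$'s, each contributing a pairing of $-2$. In more spread-out configurations, such as $\{\alpha_1, \alpha_5\} \subset A_5$, one must first link the $\alpha_i$'s via pairing-$(-1)$ steps (applying (v)) before producing the required strict drops, with $\lambda_m$ taken to be the highest root $\theta$ or another suitable dominant weight.

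For the upper bound, the plan is to identify $\gamma^\star$ and $\gamma^+$ explicitly using the orthogonal-short structure of the $\alpha_i$'s and then show that the interval $[\gamma^\star, \gamma^+]$ in the poset of dominant weights contains exactly $k$ totally ordered elements, so that the longest chain has length $k-1$. Small examples (e.g.\ in $A_{2k-1}$, $\gamma^\star$ is of the form $e_1 - e_{2k}$ while $\gamma^+$ is the sorted-descending arrangement of $\gamma$'s coordinates) suggest the intermediate dominant weights arise from nested positive roots, but a clean general description seems unavailable.

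The main obstacle is this upper bound. While the lower bound admits a uniform path-construction via Proposition \ref{pr:comb}, controlling the full structure of the dominance interval $[\gamma^\star, \gamma^+]$ appears to demand a case-by-case verification across the simple types that can contain two or more orthogonal short simple roots, namely $A_n$ ($n \geq 3$), $C_n$ ($n \geq 4$), $D_n$ ($n \geq 4$), and $E_{6,7,8}$; in $B_n$, $F_4$, and $G_2$ only $k = 1$ can occur. Verifying that no ``extra'' intermediate dominant weights appear in the interval for each such configuration is the delicate combinatorial check at the heart of the proof.
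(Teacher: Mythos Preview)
The paper does not prove this lemma; it is simply attributed to Broer \cite{Bro94} and stated without argument, so there is no in-paper proof to compare against. Let me comment directly on your strategy.

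Your upper-bound plan is far harder than it needs to be. The two lemmas stated immediately after this one in the paper dispatch it in one line: by Lemma~\ref{lem:chtcomp}(i), $\cht{\gamma} \leq \normsq{\gamma} - \normsq{\gamma^\star}$. Since the $\alpha_i$ are pairwise orthogonal short simple roots, $\normsq{\gamma} = k$. Since $\gamma > 0$ we have $\gamma^\star \neq 0$, and $\gamma^\star$ lies in $\rootlat$ because $\gamma$ does and $\gamma^\star - \gamma \in \rootlat$; hence $\normsq{\gamma^\star} \geq 1$ by Lemma~\ref{lem:innerprod}(i). Thus $\cht{\gamma} \leq k-1$ with no case analysis of the interval $[\gamma^\star, \gamma^+]$ whatsoever.

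Your lower-bound plan, on the other hand, has a genuine gap. Take $G$ of type $D_4$ with $\alpha_2$ the central node and $\gamma = \alpha_1 + \alpha_3 + \alpha_4$, so $k = 3$. The only simple root you can add with negative pairing is $\alpha_2$, and $\pair{\gamma}{\alpha_2} = -3$. Adding $\alpha_2$ once gives a strict drop by Proposition~\ref{pr:comb}(vi), but the new pairing with $\alpha_2$ is $-1$, so the second addition (which lands on the highest root $\gamma^\star = \alpha_1 + 2\alpha_2 + \alpha_3 + \alpha_4$) merely preserves $\cht{\cdot}$ by (v). Your method therefore yields only $\cht{\gamma} \geq 1$, not the required $\cht{\gamma} \geq 2$; a single very negative pairing cannot be split into several strict decreases via (vi), and neither (vii) nor (viii) helps. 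To get the correct bound one must exhibit a chain of dominant weights directly, e.g.\ $\gamma^\star \, < \, \gamma^\star + \alpha_1 \, < \, \gamma^+ = 2\alpha_1 + 3\alpha_2 + 2\alpha_3 + 2\alpha_4$, which is exactly the sort of case-by-case work you hoped to confine to the upper bound. So the difficulty is inverted relative to your outline: the upper bound is trivial from the paper's own norm estimate, while the lower bound is where a type-by-type construction (or some other idea) is genuinely needed.
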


Let $\inner{ \cdot }{ \cdot }$ denote a Weyl group-invariant inner product on $\mf{h}$, normalized so that $\normsq{\alpha} = 1$ for all short simple roots $\alpha$. Recall that $\rootlat$ denotes the root lattice of $T$.

\begin{lemma} \label{lem:innerprod} \quad
	\begin{enumerate}[(i)]
	\item $\normsq{\lambda} \in \mathbb{Z}^+$ for all $\lambda \in \rootlat$.
	\item Let $\mu$ and $\lambda$ in $D$. If $\mu < \lambda$ then $\normsq{\mu} < \normsq{\lambda}$.
	\end{enumerate}
\end{lemma}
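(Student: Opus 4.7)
The plan is to establish both parts by routine bilinear algebra, invoking only the normalization of the inner product and the fact that Cartan integers are integers.

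For (i), I would expand $\lambda = \sum_i c_i \alpha_i$ in the simple roots with $c_i \in \mb{Z}$ and compute
\[
\normsq{\lambda} \: = \: \sum_i c_i^2 \, \normsq{\alpha_i} \: + \: \sum_{i < j} c_i c_j \, \big( 2\inner{\alpha_i}{\alpha_j} \big).
\]
Since the inner product is normalized so that every short simple root has squared length $1$, a brief appeal to the classification of simple root systems shows $\normsq{\alpha_i} \in \{1,2,3\}$, so each diagonal coefficient is a positive integer. The identity $2\inner{\alpha_i}{\alpha_j} = \pair{\alpha_i}{\alpha_j}\normsq{\alpha_j}$ (the Cartan integer times $\normsq{\alpha_j}$) shows each off-diagonal coefficient is also an integer, so $\normsq{\lambda} \in \mb{Z}_{\geq 0}$; positive definiteness of $\inner{\cdot}{\cdot}$ gives strict positivity whenever $\lambda \neq 0$.

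For (ii), I would rewrite
\[
\normsq{\lambda} - \normsq{\mu} \: = \: \inner{\lambda - \mu}{\lambda + \mu} \: = \: \normsq{\lambda - \mu} \, + \, 2\inner{\lambda - \mu}{\mu}.
\]
Because $\mu < \lambda$, the difference $\lambda - \mu$ is a nonzero non-negative integer combination of positive roots, so it lies in $\rootlat$ and the first term is strictly positive by (i). The second term is non-negative: each positive root is a non-negative integer combination of simple roots, and for any simple $\alpha_i$ one has $\inner{\alpha_i}{\mu} = \pair{\mu}{\alpha_i}\normsq{\alpha_i}/2 \geq 0$ since $\mu$ is dominant. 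Adding gives the strict inequality $\normsq{\lambda} > \normsq{\mu}$.

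No real obstacle arises; the only point worth double-checking is the assertion $\normsq{\alpha_i} \in \mb{Z}$ in part (i), which reduces to the classification-based observation that ratios of squared root lengths in a simple component are $1$, $2$, or $3$. With this in hand both parts are immediate.
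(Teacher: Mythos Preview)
Your proof is correct. The paper states this lemma without proof, treating it as elementary; your decomposition in (ii), namely $\normsq{\lambda} - \normsq{\mu} = \normsq{\lambda - \mu} + 2\inner{\lambda - \mu}{\mu}$, is in fact exactly the computation the paper carries out in the proof of the subsequent Lemma~\ref{lem:chtcomp}(i), so your approach matches the paper's implicit reasoning.
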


\begin{lemma} \label{lem:chtcomp} Let $\lambda \in \Lambda$.
	\begin{enumerate}[(i)]
	\item $\cht{\lambda} \leq \normsq{\lambda^+} - \normsq{\lambda^\star} \, = \, \normsq{\lambda} - \normsq{\lambda^\star}$.
	\item For any $\gamma \in \Lambda$ such that (1) $\lambda \leq \gamma \leq \lambda^\star$ and (2) $\cht{\gamma} = 0$, we have $$\cht{\lambda} \leq \normsq{\lambda} - \normsq{\gamma}.$$
	\end{enumerate}
\end{lemma}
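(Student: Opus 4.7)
The plan is to handle part (i) directly from the chain definition of $\cht{\lambda}$, and then show that part (ii) reduces at once to part (i) via the key observation that $\gamma^\star = \lambda^\star$.

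For (i), I would take a maximal chain of dominant weights $\lambda^\star = \gamma_0 < \gamma_1 < \cdots < \gamma_m = \lambda^+$ realizing $m = \cht{\lambda}$ and prove that each increment satisfies $\normsq{\gamma_{i+1}} - \normsq{\gamma_i} \geq 1$. Writing $\delta_i := \gamma_{i+1} - \gamma_i$, I expand
\[
\normsq{\gamma_{i+1}} - \normsq{\gamma_i} \: = \: 2\inner{\gamma_i}{\delta_i} + \normsq{\delta_i}.
\]
Since $\gamma_{i+1} > \gamma_i$ are both dominant, $\delta_i$ is a nonzero non-negative integer combination of simple roots, so $\delta_i \in \rootlat$ and $\inner{\gamma_i}{\delta_i} \geq 0$ by dominance of $\gamma_i$. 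Lemma \ref{lem:innerprod}(i) gives $\normsq{\delta_i} \in \mb{Z}^+$, and since $2\inner{\gamma_i}{\alpha} = \normsq{\alpha} \pair{\gamma_i}{\alpha} \in \mb{Z}$ for every simple $\alpha$, the full increment is a positive integer and hence $\geq 1$. Summing over the chain gives $m \leq \normsq{\gamma_m} - \normsq{\gamma_0} = \normsq{\lambda^+} - \normsq{\lambda^\star}$, which by $W$-invariance equals $\normsq{\lambda} - \normsq{\lambda^\star}$. Taking the maximum over chains proves (i).

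For (ii), I would first establish that $\gamma^\star = \lambda^\star$. On the one hand, $\gamma^\star$ is dominant and satisfies $\gamma^\star \geq \gamma \geq \lambda$, so by the minimality characterization of $\lambda^\star$ we have $\lambda^\star \leq \gamma^\star$. On the other hand, $\lambda^\star$ is dominant and satisfies $\lambda^\star \geq \gamma$, so by the minimality of $\gamma^\star$ we have $\gamma^\star \leq \lambda^\star$. Hence $\gamma^\star = \lambda^\star$. Combined with $\cht{\gamma} = 0$ (which forces $\gamma^+ = \gamma^\star$), this gives $\normsq{\gamma} = \normsq{\gamma^+} = \normsq{\gamma^\star} = \normsq{\lambda^\star}$, so the bound of part (ii) coincides with that of part (i).

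There is no serious obstacle here; the only care needed is in verifying that the norm-squared increment in a chain of distinct dominant weights is indeed a positive \emph{integer}, which one gets cleanly from $\normsq{\delta_i} \in \mb{Z}^+$ plus the half-integrality of $\inner{\gamma_i}{\alpha}$. Once (i) is in hand, the content of (ii) is the observation that the hypotheses force $\gamma^\star = \lambda^\star$, collapsing the apparently stronger bound to the one already proved.
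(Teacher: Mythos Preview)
Your proof is correct and follows essentially the same route as the paper's: for (i) you expand $\normsq{\gamma_{i+1}} - \normsq{\gamma_i} = \normsq{\delta_i} + 2\inner{\gamma_i}{\delta_i}$ and observe the first summand is a positive integer while the second is nonnegative, exactly as the paper does; for (ii) you establish $\gamma^\star = \lambda^\star$ by the same two-sided minimality argument and then invoke $W$-invariance. Your extra remark that $2\inner{\gamma_i}{\delta_i}$ is itself an integer is true but unnecessary, since $\normsq{\delta_i} \geq 1$ together with $\inner{\gamma_i}{\delta_i} \geq 0$ already forces each increment to be at least $1$.
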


\begin{proof} (i) For any $\mu$, $\gamma \in D$ with $\mu < \gamma$, we have
$$ \normsq{\gamma} - \normsq{\mu}  \, = \,  \normsq{\gamma - \mu} + 2 \inner{\gamma - \mu}{\mu} \, .$$
Now,  $\gamma - \mu \in \rootlat$ implies $\normsq{\gamma - \mu} \in \mathbb{Z}^+$ by \ref{lem:innerprod} (i); and $\inner{\gamma - \mu}{\mu} \geq 0$ since $\mu \in D$ and $\gamma - \mu > 0$. Thus $$\normsq{\gamma - \mu} + 2 \inner{\gamma - \mu}{\mu} > 0 \, .$$

The result now follows from the definition of cht.

(ii) By the definition of $\lambda^\star$ and the fact that $\lambda \leq \gamma \leq \gamma^\star$, we have $\gamma^\star \geq \lambda^\star$. By the definition of $\gamma^\star$ and the fact that $\gamma \leq \lambda^\star$ we have $\gamma^\star \leq \lambda^\star$. Thus $\lambda^\star = \gamma^\star$. And, since $\cht{\gamma} = 0$, we have $\gamma^+ = \gamma^\star$. Hence $\gamma^+ = \lambda^\star$. The result now follows from the Weyl-group invariance of $\inner{\,}{\,} \,$.

\end{proof}

\subsubsection{Vanishing Theorem} \label{sec:minparvan} 

\quad

\quad

\begin{remark} \label{rem:shortexact} Let $P$ be a minimal parabolic corresponding to a simple root $\alpha$. We have a short exact sequence of $B$-modules $$\shortexact{  \nil{p}  }{  \mf{n}  }{  \chara{\alpha}  }$$ which gives rise (upon taking the dual) to a short exact Koszul sequence $$\shortexact{  \symm \otimes \chara{-\alpha}  }{  \symm   }{  \symmp{p}   } \, .$$ 

Let $\mu$ be any weight. Tensoring with $\chara{-\mu}$ we obtain the short exact sequence $$\shortexact{  \symm \otimes \chara{-\alpha - \mu}  }{  \symm \otimes \chara{-\mu}  }{  \symmp{p} \otimes \chara{-\mu}  } $$ of $B$-modules and hence the short exact sequence $$\shortexact{  \eB{\symm}{B} \otimes \eb{\alpha + \mu}{B}  }{  \eB{\symm}{B} \otimes \eb{\mu}{B}  }{  \eB{\symmp{p}}{B} \otimes \eb{\mu}{B}  }$$ of sheaves on $G/B$. This short exact sequence will be very useful in the sequel.

\end{remark}

The following proposition is implicit in \cite{Bro94}. We provide a proof here for completeness. 
\begin{proposition} \label{pr:broermin} Let $G$ be simple. Let $P$ be a minimal parabolic corresponding to a short simple root $\alpha$. Let $\mu \in D$. Then $\pc{i}{\mu}{p} = 0$ for all $i > 0$.
\end{proposition}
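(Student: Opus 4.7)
The plan is to deduce the desired vanishing by comparing $\nil{p}$ with the full nilradical $\mf{n}$ via the Koszul short exact sequence highlighted in Remark \ref{rem:shortexact}, and reducing to the already-known vanishing statements of Broer (Theorem \ref{th:broer}(ii)).

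First, I would identify $\pc{i}{\mu}{p}$ with the Dolbeault-type cohomology on $G/B$. Since the bundle map $r_P : G \times^B \nil{p} \to G/B$ is affine, the Leray spectral sequence collapses and one obtains
\[
\pc{i}{\mu}{p} \;\cong\; \Cohom{i}{G/B}{\eB{\symmp{p}}{B} \otimes \eb{\mu}{B}} \;=\; \Cohom{i}{G/B}{\eB{\symmp{p} \otimes \chara{-\mu}}{B}}.
\]
So the task reduces to showing that this sheaf cohomology vanishes in positive degrees.

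Next, I would plug $\mu$ into the short exact sequence from Remark \ref{rem:shortexact} to obtain
\[
0 \to \eB{\symm \otimes \chara{-\alpha-\mu}}{B} \to \eB{\symm \otimes \chara{-\mu}}{B} \to \eB{\symmp{p} \otimes \chara{-\mu}}{B} \to 0
\]
of sheaves on $G/B$, and take the associated long exact sequence in cohomology. The two outer terms in the long exact sequence compute $\fc{i}{\mu}$ and $\fc{i}{\alpha+\mu}$ respectively. Since $\mu \in D$ we have $\cht{\mu} = 0$ by Lemma \ref{lem:cht0}(i), so Theorem \ref{th:broer}(ii) gives $\fc{i}{\mu} = 0$ for $i > 0$. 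The key input is that $\alpha$ is \emph{short}: by Lemma \ref{cor:shortroot}, $\cht{\alpha + \mu} = 0$, so $\fc{i}{\alpha + \mu} = 0$ for $i > 0$ as well.

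With both outer terms vanishing in every positive degree, the long exact sequence forces $\Cohom{i}{G/B}{\eB{\symmp{p} \otimes \chara{-\mu}}{B}} = 0$ for all $i > 0$, which is exactly $\pc{i}{\mu}{p} = 0$. The only step that genuinely uses the hypothesis that $\alpha$ is short is the application of Lemma \ref{cor:shortroot}; this is where the argument would fail for a long simple root, since then $\inner{\alpha}{\gamma^\vee}$ can equal $-2$ or $-3$ for some $\gamma \in \Delta^+$ and one loses the combinatorial-height-zero condition on $\alpha + \mu$. That control over $\cht{\alpha + \mu}$ is the crux of the argument; once it is in hand, everything else is formal manipulation of the Koszul long exact sequence.
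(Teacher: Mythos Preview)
Your proof is correct and follows essentially the same approach as the paper: take the long exact sequence associated to the Koszul short exact sequence of Remark \ref{rem:shortexact}, kill the two $\fc{i}{\cdot}$ terms using Theorem \ref{th:broer}(ii) together with $\cht{\mu}=0$ and Lemma \ref{cor:shortroot} for $\cht{\alpha+\mu}=0$, and conclude. Your remark that the shortness of $\alpha$ enters only through Lemma \ref{cor:shortroot} is exactly the point.
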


\begin{proof}
Using the long exact sequence of cohomology associated to the short exact sequence of sheaves in Remark \ref{rem:shortexact} above, the result now follows from Lemma \ref{lem:cht0} (ii), Theorem \ref{th:broer} (ii), and Lemma \ref{cor:shortroot}.
\end{proof}

The following result is a reformulation of Broer's Lemma 3.1 in \cite{Bro94}.








\begin{proposition} \label{pr:broer} Let $P$ be a standard parabolic and choose $\lambda \in \Lambda$. If there is $w \in W_P$ such that $w*\lambda$ is $P$-dominant then $$\pc{i}{\lambda}{p} = \pc{i - l(w)}{w*\lambda}{p} \congsp \Cohom{  i - l(w)  }{  \pcot{P}  }{  p_P^* \, \eb{ w*\lambda }{P}  }$$ for all $i \geq 0$. In particular, if $i < l(w)$ then $$\pc{i}{\lambda}{p} = 0 \, .$$ Further, if there does not exist $w \in W_P$ such that $w*\lambda$ is $P$-dominant, then $$\pc{i}{\lambda}{p} = 0$$ for all $i$.

\end{proposition}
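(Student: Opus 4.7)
The plan is to interpret $G \times^B \nil{p}$ as a fiber bundle over $G \times^P \nil{p}$ with typical fiber $P/B \cong L/B_L$, and then compute higher direct images and apply the Leray spectral sequence. Explicitly, the projection $G/B \twoheadrightarrow G/P$ induces a smooth proper $G$-equivariant morphism
\[
\pi : G \times^B \nil{p} \longrightarrow G \times^P \nil{p},
\]
and if we let $r'_P : G \times^P \nil{p} \to G/P$ denote the other vector-bundle projection, then $\pi$ sits over $G/B \to G/P$. The identifications $G \times^P \nil{p} \cong \pcot{P}$ and $(r'_P)^* \eb{w*\lambda}{P} \cong p_P^* \eb{w*\lambda}{P}$ are already implicit in Remark \ref{rem:function interpretation}, so the two isomorphisms in the statement reduce to a single pushforward-plus-Leray computation.

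The heart of the argument is to compute $R\pi_* \, r_P^* \eb{\lambda}{B}$ fiberwise. Because $r_P^* \eb{\lambda}{B}$ is constant along the $\nil{p}$-direction, its restriction to any fiber of $\pi$ is just the line bundle $\eb{\lambda}{B_L}$ on $L/B_L$. Parabolic Borel--Weil--Bott applied to $L/B_L$ then tells us that if there is a (necessarily unique) $w \in W_P$ making $w*\lambda$ $P$-dominant, the cohomology $H^q(L/B_L, \eb{\lambda}{B_L})$ is nonzero only in degree $q = l(w)$, where it equals $\irrepd{w*\lambda}{P}$, and otherwise vanishes for all $q$. Combining this with equivariant (proper, flat) base change along the homogeneous bundle $\pi$, one obtains in the first case
\[
R^{l(w)} \pi_* \, r_P^* \eb{\lambda}{B} \cong (r'_P)^* \eb{w*\lambda}{P}, \qquad R^q \pi_* \, r_P^* \eb{\lambda}{B} = 0 \text{ for } q \neq l(w),
\]
and in the second case $R^q \pi_* \, r_P^* \eb{\lambda}{B} = 0$ for all $q$.

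The Leray spectral sequence for $\pi$ then collapses onto the single row $q = l(w)$ in the first case (or is identically zero in the second), yielding
\[
\pc{i}{\lambda}{p} \cong \Cohom{i-l(w)}{G \times^P \nil{p}}{(r'_P)^* \eb{w*\lambda}{P}}
\]
which, combined with the standard identifications above, delivers the two claimed isomorphisms, the pointwise vanishing $\pc{i}{\lambda}{p} = 0$ for $i < l(w)$, and total vanishing in the case where no appropriate $w$ exists. The main obstacle will be carefully justifying the equivariant base change step that upgrades the fiberwise BWB calculation to the global identification of $R^{l(w)} \pi_*$ as a pullback of $\eb{w*\lambda}{P}$; this is standard for $G$-homogeneous bundles, and one may alternatively factor $G \times^B \nil{p} \cong G \times^P (P \times^B \nil{p})$ and compute the pushforward directly along the $P/B$-bundle $P \times^B \nil{p} \to \nil{p}$, where the constancy of the sheaf in the $\nil{p}$-direction makes BWB apply transparently in each fiber.
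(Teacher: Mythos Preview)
Your argument is correct and is the standard way to prove this result. Note, however, that the paper does not actually give its own proof of this proposition: it is stated as ``a reformulation of Broer's Lemma 3.1 in \cite{Bro94}'' and left at that. So there is no paper proof to compare against in the strict sense.

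That said, your approach is entirely consistent with the techniques the paper uses elsewhere. In particular, Remark \ref{rem:pr} records exactly the identification $R^i pr_* \, \eb{M}{B} \cong \eB{\Cohom{i}{P/B}{\eb{M}{P/B}}}{P}$ for the projection $pr : G/B \to G/P$, and Lemma \ref{lem:cotsheaf} uses this together with the Leray spectral sequence in the same way you do. Your factorization $G \times^B \nil{p} \cong G \times^P (P \times^B \nil{p})$ and the observation that $P \times^B \nil{p} \cong P/B \times \nil{p}$ (since $\nil{p}$ is a $P$-module) make the fiberwise Borel--Weil--Bott computation transparent, and the degeneration of Leray to a single row is immediate. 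The base-change concern you flag is not a genuine obstacle here: the fibers of $\pi$ are all isomorphic to $P/B$, the sheaf is a line bundle pulled back from $G/B$, and the $G$-equivariance (or, equivalently, the explicit description via Remark \ref{rem:pr} applied to $M = \symmp{p} \otimes \C_{-\lambda}$) pins down the higher direct image as the pullback of $\eb{w*\lambda}{P}$ without further work.
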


\begin{remark} Choose $\lambda \in \Lambda$. Then there does not exist $w \in W_P$ such that $w*\lambda$ is $P$-dominant if and only if $\pair{\lambda}{\beta} = - \pair{\rho_P}{\beta}$ for some $\beta \in \Delta^+_P$. In particular, if $\pair{\lambda}{\alpha} = -1$ for some $\alpha \in \pi_P$ then by Proposition \ref{pr:broer}, $\pc{i}{\lambda}{p} = 0$ for all $i$.

\end{remark}

We now have the following theorem.

\begin{theorem} \label{th:minpar} Let $P$ be any minimal parabolic. Let $\mu \in D$. Then $\pc{i}{\mu}{p} = 0$ for all $i > 0$.
\end{theorem}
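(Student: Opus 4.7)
The short-root case is covered by Proposition \ref{pr:broermin}, so assume throughout that $\alpha$ is a long simple root and $P = P_\alpha$. The plan is to reduce higher vanishing of $\pc{i}{\mu}{p}$ to a Borel-Weil-Bott computation on $G/B$.

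First I would tensor the Koszul short exact sequence from Remark \ref{rem:shortexact} with $\chara{-\mu}$ and pass to the associated long exact sequence of cohomology on $G/B$. Since $\mu \in D$ has $\cht{\mu} = 0$ by Lemma \ref{lem:cht0}(i), Theorem \ref{th:broer}(ii) gives $\fc{j}{\mu} = 0$ for all $j \geq 1$, and the long exact sequence collapses to isomorphisms $\pc{i}{\mu}{p} \cong \fc{i+1}{\alpha+\mu}$ for each $i \geq 1$. The theorem therefore reduces to proving $\fc{j}{\alpha+\mu} = 0$ for all $j \geq 2$.

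To establish this vanishing I would invoke Borel-Weil-Bott directly. If $\alpha+\mu+\rho$ is singular then $\fc{j}{\alpha+\mu}$ vanishes in every degree; otherwise the cohomology is nonzero in a single degree $j = \ell(w)$, where $w \in W$ is the unique element with $w * (\alpha+\mu) \in D$, and it suffices to show $\ell(w) \leq 1$. Writing
$$\ell(w) \; = \; \big|\{\beta \in \Delta^+ \: : \: \pair{\alpha+\mu+\rho}{\beta} < 0\}\big|,$$
a contributing $\beta$ must satisfy $\pair{\mu+\rho}{\beta} < -\pair{\alpha}{\beta}$; strong dominance of $\mu+\rho$ forces $\pair{\alpha}{\beta} \leq -2$, so in particular $\beta$ is short. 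In the generic case $\pair{\alpha}{\beta} = -2$ the inequality sharpens to $\pair{\mu+\rho}{\beta} = 1$, and since $\pair{\mu}{\beta} \geq 0$ and $\pair{\rho}{\beta} \geq 1$ this compels $\pair{\rho}{\beta} = 1$. Because $\pair{\rho}{\beta}$ equals the coroot-height of $\beta$, such a $\beta$ must be simple. In any irreducible Dynkin diagram the given simple root $\alpha$ has at most one simple neighbor $\beta$ with $\pair{\alpha}{\beta} \leq -2$ (namely the short simple root joined to $\alpha$ by a multiple bond, when such exists). The remaining possibility $\pair{\alpha}{\beta} = -3$ occurs only in type $G_2$, where $\beta$ is forced uniquely to equal $\alpha_1$. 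In every case at most one $\beta$ contributes, giving $\ell(w) \leq 1$.

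The main technical obstacle lies in this combinatorial step: the set $\Phi_\alpha := \{\beta \in \Delta^+ : \pair{\alpha}{\beta} \leq -2\}$ may contain many non-simple roots (most visibly in types $C_n$ and $F_4$), and the proof must rule these out as contributors to $\ell(w)$. The key mechanism is that for non-simple $\beta \in \Phi_\alpha$ one has $\pair{\rho}{\beta} \geq 2$; combined with $\pair{\mu}{\beta} \geq 0$ this gives $\pair{\mu+\rho}{\beta} \geq -\pair{\alpha}{\beta}$ and hence $\pair{\alpha+\mu+\rho}{\beta} \geq 0$ for every such $\beta$, with equality only in the singular case already handled above.
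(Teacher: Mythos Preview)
Your reduction via the Koszul sequence is fine and matches the paper's first step: with $\mu \in D$ one has $\fc{j}{\mu}=0$ for $j\geq 1$, so the long exact sequence gives $\pc{i}{\mu}{p}\cong \fc{i+1}{\alpha+\mu}$ for $i\geq 1$, and the problem becomes $\fc{j}{\alpha+\mu}=0$ for $j\geq 2$.

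The gap is in the next step. The groups $\fc{j}{\alpha+\mu}$ are \emph{not} line--bundle cohomology on $G/B$: by definition $\fc{j}{\lambda}=\Cohom{j}{G\times^B\mf{n}}{r_B^*\,\eb{\lambda}{B}}$, i.e.\ cohomology on the cotangent bundle $T^*(G/B)$, or equivalently (Lemma~\ref{lem:cotsheaf}) $\Cohom{j}{G/B}{\eb{\lambda}{B}\otimes\eB{\symm}{B}}$. Borel--Weil--Bott says nothing about these; in particular they are \emph{not} concentrated in a single degree, and your claim that ``if $\alpha+\mu+\rho$ is singular then $\fc{j}{\alpha+\mu}$ vanishes in every degree'' is false (e.g.\ $\fc{0}{\lambda}$ is typically large regardless of whether $\lambda+\rho$ is singular). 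So the length computation $\ell(w)\leq 1$, even where the combinatorics is correct, proves the wrong statement.

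What actually controls $\fc{j}{\lambda}$ is Broer's combinatorial height: Theorem~\ref{th:broer}(iii) gives $\fc{j}{\lambda}=0$ for $j>\cht{\lambda}$. The paper therefore shows $\cht{\alpha+\mu}\leq 1$ for each long simple $\alpha$ and $\mu\in D$, which by Lemma~\ref{lem:cht0}(i) amounts to bounding how badly $\pair{\alpha+\mu}{\beta}$ can drop below $-1$ over $\beta\in\Delta^+$. This requires genuinely different combinatorics from your $\ell(w)$ count (the relevant inequality is $\pair{\alpha+\mu}{\beta}\leq -2$, not $\pair{\alpha+\mu+\rho}{\beta}<0$), and the paper handles it type by type ($B_n$, $C_n$, $F_4$, $G_2$) using Proposition~\ref{pr:comb} and Lemma~\ref{lem:chtcomp}; in the $G_2$ case with $\pair{\mu}{\alpha_1}=0$ an additional argument via a second Koszul sequence is needed because the naive cht bound is not sharp enough.
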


\begin{proof} By Proposition \ref{pr:broermin}, we need to check this result for parabolics corresponding to long simple roots. Let $\alpha$ be a long simple root and let $P$ be the standard parabolic defined by $\alpha$.  By Theorem \ref{th:broer} and the long exact sequence in cohomology associated to the short exact sequence $$\shortexact{  \eB{\symm}{B} \otimes \eb{\alpha + \mu}{B}  }{  \eB{\symm}{B} \otimes \eb{\mu}{B}  }{  \eB{\symmp{p}}{B} \otimes \eb{\mu}{B}  }$$ of sheaves on $G/B$ (cf Remark \ref{rem:shortexact} above), to prove the theorem for $\mf{p}$ it suffices to show that $$\cht{\alpha + \mu} \leq 1$$ for all $\mu \in D$. 

We now check the cht condition case by case. We shall label the simple roots as $\pi = \left\{     \alpha_1, \ldots, \alpha_n    \right\}$.



\bigskip

(1) Type $B_n$.

We use the standard labelling for the simple roots, where $\alpha_n$ is the unique short simple root. Let $\mu \in D$ and let $\alpha$ be any long simple root. If $\pair{\mu}{\alpha_n} > 0$ then $\pairp{\mu + \alpha}{\beta} \geq -1$ for all $\beta \in \Delta^+$, and hence $\cht{\mu + \alpha} = 0$. Thus we may assume that $\pair{\mu}{\alpha_n} = 0$. We may also assume that $\mu \neq 0$ as the higher cohomology vanishing when $\mu = 0$ follows from Theorem \ref{th:broer} (i).

Set $$m := \textrm{max} \, \{i : \pair{\mu}{\alpha_i} > 0   \} \, < \, n \, .$$ We claim that $\cht{\mu + \alpha_l} = 0$ for all $0 < l < m$ (if $m = 1$ this condition is of course empty and there is nothing to check). Fix $l$ with $1 \leq l < m$. If $\pair{\mu}{\alpha_j} > 0$ for some $j < l$ then set $$c_1 :=  \textrm{max} \, \{j : j < l \textrm{ and } \pair{\mu}{\alpha_j} > 0   \} \, ;$$ if $\pair{\mu}{\alpha_j} = 0$ for all $j < l$ (or if $l = 1$) then set $c_1 = 0$. Also set $$c_2 := \textrm{min} \, \{k : l < k \textrm{ and } \pair{\mu}{\alpha_k} > 0   \} \leq m \, .$$ Set $$\displaystyle \gamma := \sum_{i = c_1 + 1}^{c_2 - 1} \mu + \alpha_i \, . $$ By Proposition \ref{pr:comb} (v) and an easy induction on $c_2 - c_1 - 1$ we have $\cht{\mu + \alpha_l} = \cht{\gamma}$. Furthermore, one checks easily that $\gamma \in D$. This shows the claim, and thus it suffices to consider $\mu + \alpha_k$ for $m \leq k < n$.

For any $k \in \mathbb{N}$ with $m \leq k < n$, set $$\beta_k := \sum_{i = k}^n \alpha_i \in \Delta^+ .$$ An easy induction on $n$ utilizing Proposition \ref{pr:comb} (iv) shows that $$(\mu + \alpha_k)^\star = (\mu + \beta_k)^\star \, ,$$ since $\pair{\mu}{\alpha_j} = 0$ for $j > m$.  Thus $$\alpha_k + \mu \, \leq \, \beta_k + \mu \, \leq \, (\alpha_k + \mu)^\star.$$ Further, $\beta_k$ is a short root, so $\cht{\beta_k + \mu} = 0$ by Lemma \ref{cor:shortroot}. So, by Lemma \ref{lem:chtcomp} and the fact that $   \inner{ \mu }{ \beta_k - \alpha_k } = 0   $, we have
\begin{eqnarray*}
\cht{\alpha_k + \mu} & \leq & \normsq{\alpha_k + \mu} - \normsq{\beta_k + \mu} \\
& = & \normsq{\alpha_k} - \normsq{\beta_k} \\
& = & 1 \, .
\end{eqnarray*} 

\bigskip

(2) Type $C_n$.

We use the standard labelling for the simple roots, where $\alpha_n$ denotes the unique long simple root. Let $\mu \in D$. If $\pair{\mu}{\alpha_{n-1}} > 0$ then $\pairp{\mu + \alpha_n}{ \beta } \geq -1$ for all $\beta \in \Delta^+$ and hence $\cht{\mu + \alpha_n} = 0$. Thus we may assume that $\pair{\mu}{\alpha_{n-1}} = 0$.


By Proposition \ref{pr:comb} (iv), since $\pairp{\mu + \alpha_n}{\alpha_{n-1}} < 0$, we get $(\alpha_{n-1} +\alpha_n + \mu)^\star = (\alpha_n + \mu)^\star$.  Thus $$\alpha_n + \mu \, \leq \, \alpha_{n-1} + \alpha_n + \mu \, \leq \, (\alpha_n + \mu)^\star.$$ Further, $\alpha_{n-1} + \alpha_n$ is a short root, so $\cht{\alpha_{n-1} + \alpha_n + \mu} = 0$ by Corollary \ref{cor:shortroot}. So, by Lemma \ref{lem:chtcomp}, we have
\begin{eqnarray*}
\cht{\alpha_n + \mu} & \leq & \normsq{\alpha_n + \mu} - \normsq{\alpha_{n-1} + \alpha_n + \mu} \\
& = & - \left[   \, \normsq{  \alpha_{n-1}  } + 2 \inner{\alpha_{n-1}}{\alpha_n}   \right] \\
& = & -(1 - 2) \\
& = & 1 \, .
\end{eqnarray*} 

\bigskip

(3) Type $G_2$.

We have a short simple root $\alpha_1$ and a long simple root $\alpha_2$.

Note that if $\pair{\mu}{\alpha_1} > 2$  then $\mu + \alpha_2 \in D$. If $\pair{\mu}{\alpha_1} = 2$ then $$\pairp{\mu + \alpha_2}{ \alpha_1 } = -1$$ and thus $$\cht{\mu + \alpha_2} = \cht{\mu + \alpha_1 + \alpha_2} = 0$$ by Proposition \ref{pr:comb} (v) (note that $\mu + \alpha_1 + \alpha_2 \in D$).

Hence we may assume that $\pair{\mu}{\alpha_1} \leq 1$. We now consider the cases $\pair{\mu}{\alpha_1} = 0$ and $\pair{\mu}{\alpha_1} = 1$ separately.

Assume first that $\pair{\mu}{\alpha_1} = 1$. Since $\alpha_1 + \alpha_2$ is a short root, by Corollary \ref{cor:shortroot} we get that $\cht{\alpha_1 + \alpha_2 + \mu} = 0$. By the assumption on $\mu$, we have $\pairp{ \mu + \alpha_2 }{ \alpha_1 } < 0$. Thus, by Proposition \ref{pr:comb} (iv), $(\alpha_2 + \mu)^\star = (\alpha_1 + \alpha_2 + \mu)^\star$. Hence $$\alpha_2 + \mu \, < \, \alpha_1 + \alpha_2 + \mu \, \leq \, (\alpha_2 + \mu)^\star$$ and by Lemma \ref{lem:chtcomp} (ii) we have 
\begin{eqnarray*}
\cht{\alpha_2 + \mu} & \leq & \normsq{\alpha_2 + \mu} - \normsq{\alpha_1 + \alpha_2 + \mu} \\
& = & - \left[   \normsq{\alpha_1} + 2 \inner{\mu}{\alpha_1} + 2 \inner{\alpha_1}{\alpha_2}   \right] \\
& = & -(1 + \pair{\mu}{\alpha_1} - 3) \\
& = & 1 \, .
\end{eqnarray*} 


Now assume that $\pair{\mu}{\alpha_1} = 0$. Let $\mf{p}$ be the minimal parabolic associated to $\alpha_2$. If $\mu = 0$ then $$\pc{i}{\mu}{p} = 0$$ for $i > 0$, by Theorem \ref{th:broer}.

If $\mu \neq 0$ then $\pair{\mu}{\alpha_2} > 0$ and $\cht{\mu + \alpha_1 + \alpha_2} = 0$ by Lemma \ref{cor:shortroot}, as $\alpha_1 + \alpha_2$ is a short root. Let $\mf{q}$ be the minimal parabolic corresponding to the short simple root $\alpha_1$. We have the short exact Koszul complex
\begin{eqnarray*}  0 & \rightarrow & \eB{\symm}{B} \otimes \eb{ \mu + \alpha_1 + \alpha_2 }{B} \rightarrow \eB{\symm}{B} \otimes \eb{ \mu + \alpha_2 }{B} \\
& \rightarrow & \eB{\symmp{q}}{B} \otimes \eb{ \mu + \alpha_2 }{B} \, \rightarrow \, 0
\end{eqnarray*} of sheaves on $G/B$. Since $\cht{\mu + \alpha_1 + \alpha_2} = 0$, by Theorem \ref{th:broer} (ii) we have $$\fc{i}{  \mu + \alpha_1 + \alpha_2  } = 0$$ for all $i > 0$.

Now, $\pairp{ \mu + \alpha_2 }{ \alpha_1 } = -3$. Thus $$r_{\alpha_1} * (\mu + \alpha_2) = \mu + 2\alpha_1 + \alpha_2 \in D \, .$$ By Propositions \ref{pr:broer} and \ref{th:minpar} we now have $$\pc{i+1}{\mu + \alpha_2}{q} \congsp \pc{i}{\mu + 2 \alpha_1 + \alpha_2}{q} = 0$$ for all $i > 0$. Thus, by the short exact sequence above, $$\fc{i}{\mu + \alpha_2} = 0$$ for all $i > 1$.

Now, from the short exact Koszul complex
\begin{eqnarray*}  0 & \rightarrow & \eB{\symm}{B} \otimes \eb{ \mu + \alpha_2 }{B} \rightarrow \eB{\symm}{B} \otimes \eb{ \mu }{B} \\
& \rightarrow & \eB{\symmp{p}}{B} \otimes \eb{ \mu }{B} \, \rightarrow \, 0
\end{eqnarray*} and the fact that $\mu \in D$ we obtain that $\pc{i}{\mu}{p} = 0$ for all $i > 0$, as desired.

\bigskip

(4) Type $F_4$.

We have 4 simple roots $\alpha_i$, $\, i = 1, 2, 3, 4$, where $\alpha_3$ and $\alpha_4$ are the long simple roots. Fix $\mu \in D$. We consider the cases $\pair{\mu}{\alpha_2} > 0$ and $\pair{\mu}{\alpha_2} = 0$ separately.

First assume that $\pair{\mu}{\alpha_2} > 0$. Note that $\mu + \alpha_3 + \alpha_4 \in D$. In the following we use Proposition \ref{pr:comb} (v) extensively. If $\pair{\mu}{\alpha_3} > 0$ and $\pair{\mu}{\alpha_4} > 0$ then $\mu + \alpha_3 \in D$ and $\mu + \alpha_4 \in D$. If $\pair{\mu}{\alpha_3} = 0$ and $\pair{\mu}{\alpha_4} > 0$ then $\mu + \alpha_3 \in D$ and $$\cht{\mu + \alpha_4} = \cht{  \mu + \alpha_3 + \alpha_4  } = 0 \, .$$ If $\pair{\mu}{\alpha_4} = 0$ and $\pair{\mu}{\alpha_3} > 0$ then $\mu + \alpha_4 \in D$ and $$\cht{\mu + \alpha_3} = \cht{  \mu + \alpha_3 + \alpha_4  } = 0 \, .$$ Finally, if $\pair{\mu}{\alpha_3} = \pair{\mu}{\alpha_4} = 0$ we have that $$\cht{\mu + \alpha_3} = \cht{\mu + \alpha_3 + \alpha_4} = \cht{\mu + \alpha_4} = 0 \, .$$ Thus, if $\pair{\mu}{\alpha_2} > 0$ we have $$\cht{\mu + \alpha_3} = \cht{\mu + \alpha_4} = 0 \, .$$

Now assume that $\pair{\mu}{\alpha_2} = 0$. Fix $k = 3$ or $4$; we now show that $\cht{\mu + \alpha_k} \leq 1$. Note that if $\pair{\mu}{\alpha_3} > 0$ then $\mu + \alpha_4 \in D$, so we may assume that $\pair{\mu}{\alpha_i} = 0$ for $1 < i < k$ (this condition is obviously already satisfied if $k=3$).  Set $$\beta_k := \sum_{i = 2}^k \alpha_i.$$ Then $\beta_k$ is a short root and $\cht{  \beta_k + \mu  } = 0$ by Lemma \ref{cor:shortroot}. Also, by our assumptions on $\mu$, $(\beta_k + \mu)^\star = (\alpha_k + \mu)^\star$ by Proposition \ref{pr:comb} (iv). Hence $$\alpha_k + \mu\, < \, \beta_k + \mu \leq (\alpha_k + \mu)^\star$$ and we have (using $\delta_{k, \, 4}$ to denote the terms that occur only when $k = 4$):

\begin{eqnarray*}
\cht{\alpha_k + \mu} & \leq & \normsq{\alpha_k + \mu} - \normsq{\beta_k + \mu} \\
& = & - \left[ \,   \normsq{\alpha_2} +  \delta_{k, \, 4} \normsq{\alpha_3} + 2 \inner{\alpha_2}{\alpha_3} + 2 \delta_{k, \, 4} \inner{\alpha_3}{\alpha_4}  \right] \\
& = & -\Big[   1 + 2 \delta_{k, \, 4} - 2 - 2 \delta_{k, \, 4}   \Big] \\
& = & 1 \, .
\end{eqnarray*}

\end{proof}

\bigskip

\subsection{ $\mu - 2 \rho_P$ vanishing } \label{subsec:P-reg dominant vanishing} \quad

We have the following formulation of the Grauert-Riemenschneider vanishing theorem due to Kempf \cite{Ke76}.

\begin{theorem} \label{th:gr} (Grauert-Riemenschneider) Let $X$ be a smooth complex variety and let $f : X \rightarrow Y$ be a proper morphism of complex varieties. Let $\omega_X$ denote the canonical bundle of $X$. If $n$ is the dimension of a generic fiber of $f$ then $R^if_*(\omega_X) = 0$ for all $i > n$.
\end{theorem}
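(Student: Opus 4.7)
The plan is to deduce the stated bound from the classical Grauert--Riemenschneider vanishing for proper birational morphisms (which handles the extreme case $i>0$) together with a base-change/torsion-freeness argument that absorbs the remaining $n$ fiber dimensions.

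First I would note that the vanishing of $R^i f_* \omega_X$ is a local question on $Y$, so one may assume $Y$ is affine. Applying Chow's lemma and Hironaka resolution to $f$, we obtain a proper birational map $\pi : \tilde{X} \to X$ with $\tilde{X}$ smooth and with $f \circ \pi$ projective. The classical Grauert--Riemenschneider theorem gives $\pi_* \omega_{\tilde{X}} = \omega_X$ and $R^j \pi_* \omega_{\tilde{X}} = 0$ for $j > 0$, so the Leray spectral sequence for $f \circ \pi$ collapses to yield $R^i f_* \omega_X \cong R^i (f \circ \pi)_* \omega_{\tilde{X}}$. I may therefore replace $X$ by $\tilde{X}$ and assume $f$ is projective, and then further restrict $Y$ to $f(X)$ so that $f$ is also surjective; neither step changes the generic fiber dimension $n$.

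Next I would invoke Koll\'ar's torsion-freeness theorem: for a surjective projective morphism $f : X \to Y$ with $X$ smooth, each higher direct image $R^i f_* \omega_X$ is a torsion-free sheaf on $Y$. It then suffices to kill the generic stalk. By generic smoothness of the complex morphism $f$, there is a dense open $U \subseteq Y$ over which $f$ is smooth of relative dimension $n$; cohomology and base change identifies $(R^i f_* \omega_X)|_U$ with the $i$-th higher direct image of $\omega$ on a smooth $n$-dimensional family, and this vanishes by the dimensional bound on coherent cohomology whenever $i > n$. Torsion-freeness then propagates the vanishing from $U$ to all of $Y$.

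The main obstacle is Koll\'ar's torsion-freeness statement, whose proof is genuinely non-trivial and relies on substantive Hodge-theoretic input: degeneration of the relative Hodge-to-de Rham spectral sequence and control of the monodromy filtration on the associated variation of Hodge structure. Everything else in the plan is essentially formal -- Chow's lemma, Hironaka resolution, and the Leray spectral sequence are standard, and the generic-fiber computation is immediate once torsion-freeness is in hand. An alternative route would be to apply the relative Kawamata--Viehweg vanishing theorem after twisting by an $f$-ample line bundle and then passing to a limit, but the torsion-freeness argument is better adapted to the exact bound $i > n$.
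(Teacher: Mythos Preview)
The paper does not prove this theorem. It is quoted as a known result, attributed to Kempf \cite{Ke76}, and then invoked as a black box in the proof of Theorem~\ref{th:vanishing}. There is therefore nothing in the paper for your argument to be compared against.

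As an independent proof, your outline is sound: the Chow/Hironaka reduction to the projective case is standard, generic smoothness in characteristic zero furnishes the dense open over which the fibers genuinely have dimension $n$, and Koll\'ar's torsion-freeness theorem then propagates the vanishing from that open set to all of $Y$. The one historical wrinkle is that Koll\'ar's torsion-freeness result (1986) postdates the Kempf reference the paper cites (1976), so your route is not the original one; but since the paper is merely citing the statement rather than proving it, that is immaterial here.
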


The proof of the following theorem uses ideas from Broer's proof of his theorem 2.2 in \cite{Bro94} and appears here in a strengthened form due to a new proof from the referee.

\begin{theorem} \label{th:vanishing} Let $P$ be any standard parabolic and let $\mu \in D$ be $P$-regular dominant. Then $\pc{i}{ \mu - 2 \rho_P }{p} = 0$ for all $i > 0$. 

\end{theorem}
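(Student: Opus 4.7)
The plan is to realize the sheaf $r_P^* \eb{\mu - 2\rho_P}{B}$ on $X := G \times^B \nil{p}$ as a twist of the canonical bundle of $X$, and then apply a Grauert-Riemenschneider-type vanishing theorem to a proper morphism from $X$ onto an affine target.

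The first step is the canonical bundle computation. Since $X$ is the smooth total space of the $G$-equivariant vector bundle $G \times^B \nil{p}$ over $G/B$, the standard formula for the canonical bundle of a vector bundle gives $K_X = r_P^*\bigl(K_{G/B} \otimes \det(G \times^B \nil{p})^{-1}\bigr)$. Using $K_{G/B} = \eb{-2\rho}{B}$ together with the observation that the sum of weights of the $B$-module $\nil{p}$ equals $\sum_{\beta \in \Delta^+ \setminus \Delta^+_P} \beta = 2\rho - 2\rho_P$, this yields $K_X = r_P^* \eb{-2\rho_P}{B}$, and so
\begin{equation*}
r_P^* \eb{\mu - 2\rho_P}{B} \,\cong\, K_X \otimes r_P^* \eb{\mu}{B}.
\end{equation*}

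Next consider the moment map $f: X \to \mf{g}$, $[g,v] \mapsto g \cdot v$. Via the closed embedding $X \hookrightarrow G/B \times \mf{g}$ sending $[g,v] \mapsto (gB, g \cdot v)$, the map $f$ is proper (as $G/B$ is proper), with image the affine variety $\overline{G \cdot \nil{p}} \subseteq \mf{g}$ (the closure of a Richardson nilpotent orbit). The line bundle $r_P^* \eb{\mu}{B}$ enjoys two key positivity properties: it is globally generated on $X$, since $\mu$ is $G$-dominant and so $\eb{\mu}{B}$ is globally generated on $G/B$; and its restriction to each fiber $L/B_L$ of the auxiliary $P/B$-bundle $\phi: X \to T^*(G/P)$ is the line bundle on $L/B_L$ associated to the weight $\mu$, which is ample precisely because $\mu$ is $L$-regular dominant---equivalently, $P$-regular dominant. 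Combining the global semi-ampleness with this fiberwise ampleness along $\phi$, an appropriate form of Kollár's relative vanishing theorem (a generalization of Grauert-Riemenschneider permitting a semi-ample twist) will yield $R^i f_*\bigl(K_X \otimes r_P^* \eb{\mu}{B}\bigr) = 0$ for all $i > 0$.

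Finally, since $\overline{G \cdot \nil{p}}$ is affine, higher cohomology of coherent sheaves on it vanishes, so the Leray spectral sequence for $f$ collapses to give $H^i\bigl(X,\, K_X \otimes r_P^* \eb{\mu}{B}\bigr) = 0$ for $i > 0$. By the identification from the first paragraph, this is exactly the desired vanishing $\pc{i}{\mu - 2\rho_P}{p} = 0$. The principal obstacle is the relative vanishing step: since $\mu$ need not be $G$-regular dominant, $r_P^* \eb{\mu}{B}$ is not ample on all of $X$, so one must carefully combine its global semi-ampleness with its fiberwise ampleness along $\phi$ to verify the hypothesis of Kollár's theorem in the form required.
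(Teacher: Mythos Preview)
Your approach works but differs from the paper's. With $L := r_P^*\eb{\mu}{B}$, one can indeed check that $L$ is nef (globally generated, as $\mu\in D$) and $f$-big for the moment map $f$ (its restriction to the generic fiber $P/B$ is ample exactly by $P$-regular dominance), so relative Kawamata--Viehweg gives $R^if_*(K_X\otimes L)=0$ and affineness of the target finishes the job via Leray; the obstacle you flag is therefore surmountable, though it requires machinery beyond what the paper invokes. The paper instead passes to the total space $Y := G\times^B(\nil{p}\times\chara{\mu})$ of the line bundle $L^{-1}$ over $X$, where $\omega_Y$ is the pullback of $K_X\otimes L$ and the desired cohomology group sits as a direct summand of $H^i(Y,\omega_Y)$. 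The gain is that $Y$ admits a proper \emph{generically finite} map to the \emph{affine} target $\mf{g}\times\irrepg{\mu}$: $P$-regular dominance forces the stabilizer in $P$ of the highest-weight line in $\irrepg{\mu}$ to be exactly $B$, so $P\times^B\chara{\mu}\to P\cdot\chara{\mu}$ is birational, and one composes with the generically finite moment map $G\times^P\nil{p}\to\mf{g}$. Then the untwisted Grauert--Riemenschneider theorem (Theorem~\ref{th:gr}) applies directly. Both arguments rest on the same geometric insight---$P$-regular dominance is what collapses the $P/B$ fibers---but the paper's cone construction trades your Kawamata--Viehweg appeal for the classical GR theorem already stated in the text.
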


\begin{proof} Set $Y := G \times^B ( \nil{p} \times \chara{\mu} )$. The natural map $$P \times^B \chara{\mu} \rightarrow P.\chara{\mu} \subseteq \irrepgw{\mu}{\mu}$$ is birational, since the stabilizer in $P$ of $\irrepgw{\mu}{\mu}$ is $B$ by the $P$-regular dominance of $\mu$. From this and the isomorphism $$P \times^B ( \nil{p} \times P.\chara{\mu} ) \congsp \nil{p} \times ( P \times^B \chara{\mu} )$$ we now obtain a birational map \begin{eqnarray} \label{eq:generically finite}   Y & \cong & G \times^P \big( P \times^B ( \nil{p} \times \chara{\mu} ) \big) \nonumber \\ & \cong & G \times^P \big(  \nil{p} \times ( P \times^B \, \chara{\mu} )  \big) \, \rightarrow \, G \times^P \big( \nil{p} \times ( P.\chara{\mu} ) \big)   \, .\end{eqnarray}

Now, the natural map $$  G \times^P \nil{p} \rightarrow \mf{g}   $$ is generically finite; choose $X \in \mf{g}$ such that the fiber over $X$ is finite. Then the fiber over $(X, 0)$ in the natural map \begin{equation} \label{eq:generically finite 2} \hat{f} : G \times^P \big( \nil{p} \times ( P.\chara{\mu} ) \big) \, \rightarrow \, \mf{g} \times \irrepg{\mu}  \end{equation} is also finite, so $\hat{f}$ generically has finite fibers.

By (\ref{eq:generically finite}) and (\ref{eq:generically finite 2}) we now obtain a morphism $$ f : Y \, \rightarrow \, \mf{g} \times \irrepg{\mu} \, .$$ This map is proper, since it factors as $$  Y \, \hookrightarrow \, G / B \times \mf{g} \times \irrepg{\mu}  \, $$ followed by the projection onto the last two coordinates; and it is also generically finite, since the maps in (\ref{eq:generically finite}) and (\ref{eq:generically finite 2}) generically have finite fibers.

We now obtain from the Grauert-Riemenschneider theorem that $$\cohom{i}{Y}{\omega_Y} = 0$$ for all $i > 0$. By an argument similar to the proof of Theorem 2.2 in \cite{Bro94}, this implies that $$\COhom{i}{G/B}{  \eB{  \symmp{p} \otimes ( \chara{ -2 \rho_P } \otimes \chara{ \mu } )^*  }{B}  } = 0$$ for all $i > 0$. This completes the proof.
\end{proof}

We now collect a few corollaries of Theorem \ref{th:vanishing}.

\begin{corollary} \label{cor:longest} Let $P$ be any standard parabolic. Recall that $w_0^P$ is the longest element of the Weyl group $W_P \subseteq W$ corresponding to $\pi_P$. Choose $\mu \in \Lambda$ such that $w_0^P(\mu) \in D$ and $w_0^P$ is of minimal length with this property. Then $\pc{i}{\mu}{p} = 0$ for all $i \neq l(w_0^P)$.

\end{corollary}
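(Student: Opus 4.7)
The plan is to combine Proposition \ref{pr:broer} with Theorem \ref{th:vanishing} via the elementary identity $w_0^P * \mu = w_0^P(\mu) - 2\rho_P$. To derive this identity, note that $w_0^P$ sends $\Delta^+_P$ to $-\Delta^+_P$ and fixes every positive root outside of $\Delta^+_P$ setwise, so $w_0^P(\rho_P) = -\rho_P$ and $w_0^P(\rho - \rho_P) = \rho - \rho_P$; hence $w_0^P(\rho) = \rho - 2\rho_P$, and the identity follows from $w_0^P * \mu = w_0^P(\mu + \rho) - \rho$.

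The hypothesis ``$w_0^P$ is of minimal length with this property'' forces $\mu$ to be $W_P$-regular (otherwise one could shorten $w_0^P$ by a reflection in the stabilizer); combined with $w_0^P(\mu) \in D$, this shows that $w_0^P(\mu)$ is in fact $P$-regular dominant and lies in $D$. In particular, $w_0^P * \mu = w_0^P(\mu) - 2\rho_P$ is $P$-dominant, so Proposition \ref{pr:broer} applies with $w = w_0^P$ and gives
$$\pc{i}{\mu}{p} \: \cong \: \pc{i - l(w_0^P)}{w_0^P * \mu}{p}$$
for all $i \geq 0$, together with the vanishing $\pc{i}{\mu}{p} = 0$ for $i < l(w_0^P)$.

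It remains to show that $\pc{j}{w_0^P * \mu}{p} = 0$ for all $j > 0$, because this is equivalent to the desired vanishing in degrees $i > l(w_0^P)$. But writing $w_0^P * \mu = \nu - 2\rho_P$ with $\nu := w_0^P(\mu)$, we have already observed that $\nu \in D$ is $P$-regular dominant. Theorem \ref{th:vanishing} therefore yields exactly the required vanishing, completing the argument.

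The main obstacle is the translation between the ``$P$-dominance'' hypothesis required by Proposition \ref{pr:broer} (which concerns the shifted action) and the ``$P$-regular dominance in $D$'' hypothesis required by Theorem \ref{th:vanishing} (which concerns the unshifted action). The identity $w_0^P * \mu + 2\rho_P = w_0^P(\mu)$ is precisely the bridge: the shift by $2\rho_P$ built into Theorem \ref{th:vanishing} exactly cancels the shift introduced by the $*$-action of $w_0^P$. Verifying carefully that the minimality hypothesis supplies both the $P$-dominance of $w_0^P * \mu$ and the $P$-regularity of $w_0^P(\mu)$ is the only real content; once these are in hand, the two results compose cleanly.
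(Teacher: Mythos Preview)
Your overall strategy matches the paper's, and the identity $w_0^P * \mu = w_0^P(\mu) - 2\rho_P$ together with the regularity argument from minimality are both correct and are exactly what the paper uses. However, there is a genuine gap at the step where you conclude that $w_0^P * \mu$ is $P$-dominant.

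From $W_P$-regularity and $w_0^P(\mu)\in D$ you correctly deduce that $w_0^P(\mu)$ is $P$-regular dominant, i.e.\ $\pair{w_0^P(\mu)}{\alpha}\geq 1$ for all $\alpha\in\pi_P$. But $P$-dominance of $w_0^P * \mu = w_0^P(\mu)-2\rho_P$ requires $\pair{w_0^P(\mu)}{\alpha}\geq 2$, since $\pair{2\rho_P}{\alpha}=2$. When $\pair{w_0^P(\mu)}{\alpha}=1$ for some $\alpha\in\pi_P$ (equivalently $\pair{\mu}{\alpha'}=-1$ for the corresponding $\alpha'\in\pi_P$), the weight $w_0^P*\mu$ is \emph{not} $P$-dominant, and the isomorphism clause of Proposition~\ref{pr:broer} does not apply with $w=w_0^P$.

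The paper handles precisely this case by a preliminary split: if $\pair{\mu}{\alpha}=-1$ for some $\alpha\in\pi_P$, then $\mu+\rho$ is $P$-singular, so \emph{no} $w\in W_P$ makes $w*\mu$ $P$-dominant, and the second clause of Proposition~\ref{pr:broer} gives $\pc{i}{\mu}{p}=0$ for all $i$. Only after disposing of that case may one assume $\pair{\mu}{\alpha}\leq -2$ for every $\alpha\in\pi_P$, which yields $\pair{w_0^P(\mu)}{\alpha}\geq 2$ and hence the $P$-dominance of $w_0^P*\mu$ that your argument needs. Insert this case distinction and your proof is complete.
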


\begin{proof} If $\pair{\mu}{\alpha} = -1$ for any $\alpha \in \pi_P$ then the result follows immediately from Proposition \ref{pr:broer}. Thus we may assume that $\pair{\mu}{\alpha} \leq -2$ for all $\alpha \in \pi_P$ and hence $\pairp{w_0^P \mu}{\alpha} \geq 2$ for all $\alpha \in \pi_P$. This implies that $$\pairp{w_0^P * \mu}{\alpha} = \pairp{ w_0^P \mu - 2 \rho_P }{\alpha} \geq 0$$ for all $\alpha \in \pi_P$. Since $w_0^P * \mu \in D$ this gives $$w_0^P * \mu + 2 \rho_P \, = \, w_0^P \mu \in D \, .$$ Thus $\pc{i}{w_0^P * \mu}{p} = 0$ for all $i > 0$ by Theorem \ref{th:vanishing}. The result now follows from Proposition \ref{pr:broer}.

\end{proof}

\begin{corollary} \label{cor:simple} Let $\mu \in \Lambda$ be such that $\mu \notin D$ and $r_\alpha (\mu) \in D$ for some $\alpha \in \pi$. Then $\fc{i}{\mu} = 0$ for all $i > 1$.
\end{corollary}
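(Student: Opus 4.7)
The plan is to exploit the minimal parabolic $P$ corresponding to $\alpha$ together with the Koszul short exact sequence
\begin{equation*}
\shortexact{\eB{\symm}{B}\otimes\eb{\alpha+\mu}{B}}{\eB{\symm}{B}\otimes\eb{\mu}{B}}{\eB{\symmp{p}}{B}\otimes\eb{\mu}{B}}
\end{equation*}
from Remark \ref{rem:shortexact}. Writing $n := -\pair{\mu}{\alpha}$, the assumption that $r_\alpha(\mu)\in D$ while $\mu\notin D$ forces $n\geq 1$. The long exact sequence in cohomology contains the segments $\fc{i}{\mu+\alpha}\to\fc{i}{\mu}\to\pc{i}{\mu}{p}$, so the corollary reduces to proving that $\pc{i}{\mu}{p}=0$ and $\fc{i}{\mu+\alpha}=0$ for every $i>1$.

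The first vanishing is immediate from Corollary \ref{cor:longest} applied to $\mu$ and $P$: here $W_P=\{1,r_\alpha\}$ so $w_0^P=r_\alpha$; the hypothesis $r_\alpha(\mu)\in D$ supplies exactly the required condition; and $r_\alpha$ is of minimal length in $W_P$ with this property because $\mu\notin D$ rules out the identity. Hence $\pc{i}{\mu}{p}=0$ for all $i\neq l(r_\alpha)=1$.

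For the second vanishing I induct on $n$. The base case $n=1$ gives $\mu+\alpha=r_\alpha(\mu)\in D$, so Lemma \ref{lem:cht0}(i) together with Theorem \ref{th:broer}(ii) yield $\fc{i}{\mu+\alpha}=0$ for $i>0$. For the inductive step with $n\geq 2$, one computes that $\pair{r_\alpha\mu-\alpha}{\beta}=\pair{r_\alpha\mu}{\beta}-\pair{\alpha}{\beta}\geq 0$ for every simple $\beta\neq\alpha$ (using $\pair{r_\alpha\mu}{\beta}\geq 0$ and $\pair{\alpha}{\beta}\leq 0$), while $\pair{r_\alpha\mu-\alpha}{\alpha}=n-2$. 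Consequently, if $n=2$ then $\mu+\alpha=r_\alpha\mu-\alpha\in D$ and Theorem \ref{th:broer}(ii) again applies; and if $n\geq 3$ then $\mu+\alpha\notin D$ (since $\pair{\mu+\alpha}{\alpha}=2-n<0$) but $r_\alpha(\mu+\alpha)=r_\alpha\mu-\alpha\in D$ and $-\pair{\mu+\alpha}{\alpha}=n-2<n$, so the inductive hypothesis yields $\fc{i}{\mu+\alpha}=0$ for $i>1$. Feeding both vanishings into the long exact sequence above then completes the argument.

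The only delicate point is the inductive bookkeeping: one must confirm that the shift $\mu\mapsto\mu+\alpha$ preserves the hypothesis ``$r_\alpha$ carries the weight into $D$'' while strictly decreasing $n$, and that the small cases $n=1,2$ land in the dominant regime where Theorem \ref{th:broer}(ii) applies directly. Apart from this, the argument is a straightforward assembly of Corollary \ref{cor:longest}, Remark \ref{rem:shortexact}, and Theorem \ref{th:broer}(ii).
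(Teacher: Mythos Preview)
Your proof is correct and follows essentially the same approach as the paper: both use the Koszul short exact sequence from Remark \ref{rem:shortexact}, invoke Corollary \ref{cor:longest} for the $\pc{i}{\mu}{p}$ vanishing, and induct on $n=-\pair{\mu}{\alpha}$ to handle $\fc{i}{\mu+\alpha}$, with the base cases $n=1,2$ landing in $D$ via the identity $r_\alpha\mu-\alpha=\mu+(n-1)\alpha$. The only organizational difference is that you establish the $\pc{i}{\mu}{p}$ vanishing once for all $n$ at the outset, whereas the paper interleaves it with the induction; this is cosmetic.
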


\begin{proof} Let $\mf{p}$ be the minimal parabolic with $\pi_P = \{\alpha\}$. Consider the Koszul complex $$\hspace{-.25in} \shortexact{  \eB{\symm}{B} \otimes \eb{\alpha + \mu}{B}  }{  \eB{\symm}{B} \otimes \eb{\mu}{B}  }{  \eB{\symmp{p}}{B} \otimes \eb{\mu}{B}  }$$ of Remark \ref{rem:shortexact}. We use induction on $-\mu(\alpha^\vee)$.

If $\mu(\alpha^\vee) = -1$ then $\pc{i}{\mu}{p} = 0$ for all $i$ by Proposition \ref{pr:broer}. Also, $\mu + \alpha = r_\alpha(\mu) \in D$, so $\fc{i}{\mu + \alpha} = 0$ for all $i > 0$ by Theorem \ref{th:broer} (ii) (this also follows from Theorem \ref{th:vanishing}). Thus, by the Koszul complex above, $\fc{i}{\mu} = 0$ for all $i > 0$.

If $\pair{\mu}{\alpha} = -2$ then $\pc{i}{\mu}{p} = 0$ for all $i > 1$ by Corollary \ref{cor:longest}. Further, as $r_\alpha(\mu) \in D$ and $\pairp{\mu + \alpha}{\alpha} = \pairp{r_\alpha \mu - \alpha}{\alpha} = 0$, we have that $\mu + \alpha \in D$. The result now follows from the Koszul complex above.

Now assume that $\mu(\alpha^\vee) = -n$ for some $n > 2$. By induction assume that $\fc{i}{\lambda} = 0$ for all $i > 1$ and $\lambda$ such that $r_\alpha (\lambda) \in D$ and $0 < -\lambda(\alpha^\vee) < n$.

Note that $$r_\alpha (\mu + \alpha) = r_\alpha(\mu) - \alpha \in D,$$ since $$(r_\alpha(\mu) - \alpha) (\alpha^\vee) = n - 2 \geq 0$$ and $$(r_\alpha(\mu) - \alpha)(\gamma^\vee) \geq r_\alpha(\gamma^\vee) \geq 0$$ for any $\gamma \in \pi \setminus \{\alpha\}$. Thus, by the induction hypothesis, $\fc{i}{\mu + \alpha} = 0$ for all $i > 1$. Furthermore, $\pc{i}{\mu}{p} = 0$ for all $i > 1$ by Corollary \ref{cor:longest}. The result now follows easily from the Koszul complex above.
\end{proof}

\begin{remark} Let $\mu \in \Lambda$ and $\alpha \in \pi$ be such that $\mu \notin D$ and $r_\alpha ( \mu ) \in D$. Let $\mf{p}$ be the minimal parabolic corresponding to $\alpha$. By Corollary \ref{cor:simple}, we now obtain a short exact sequence $$   \shortexact{  \fc{1}{ \mu+ \alpha }  }{  \fc{1}{ \mu }  }{  \pc{1}{\mu}{p}  }   \, .$$ Furthermore, if $\pair{\mu}{\alpha} = -1$, then all of these cohomology groups are 0.
\end{remark}
\subsection{Vanishing in Type $A$} \label{sec:frobenius splitting} \quad


Fix a prime $p$; for the rest of this section we assume that all schemes are $\algclosed{p}$-schemes unless otherwise specified, where $\mb{F}_p$ is the finite field with $p$ elements and $\algclosed{p}$ is its algebraic closure. The main reference for this section is \cite{BK}.

\subsubsection{Frobenius splitting}

Let $X$ be a scheme over $\algclosed{p}$. We define a morphism $F_X$ of schemes over $\mb{F}_p$ as follows. Set $F_X(x) = x$ for all $x \in X$ and define $F_X^\# : \struct{X} \rightarrow F_{X*} \, \struct{X}$ to be the $p^{\textrm{th}}$ power map $f \mapsto f^p$; this is clearly an $\mb{F}_p$-linear map. Note that $F_X$ is not a morphism of schemes over $\algclosed{p}$. This morphism is called the \textbf{absolute Frobenius morphism}. Generally when the context is clear we'll drop the subscript and just write $F$.


\begin{definition} We say that $X$ is \textbf{Frobenius split} if there is an $\struct{X}$-linear map $\varphi : F_* \struct{X} \rightarrow \struct{X}$ such that $\varphi \circ F^\#$ is the identity map on $\struct{X}$.
\end{definition}

The following is the essential result we require from the theory of Frobenius splitting.

\begin{proposition} \label{pr:frobvanishing} (\cite{BK}, Lemma 1.2.7) Let $X$ be a Frobenius split scheme and let $\mc{L}$ be an invertible sheaf on $X$. Then for all $i \geq 0$ there is an injection $$\cohom{i}{X}{  \mc{L}  } \hookrightarrow \cohom{i}{X}{  \mc{L}^p  }$$ (as $ \mb{F}_p $-vector spaces). In particular, if $\cohom{i}{X}{  \mc{L}^n  } = 0$ for all $n \gg 0$ then $\cohom{i}{X}{  \mc{L}  } = 0$ also.
\end{proposition}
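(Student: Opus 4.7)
The plan is to exploit the fact that for an invertible sheaf, pulling back along the absolute Frobenius gives the $p$-th tensor power, and to combine this with the splitting.

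First I would observe that for any invertible sheaf $\mc{L}$ on $X$ there is a natural isomorphism $F^* \mc{L} \cong \mc{L}^p$: on an affine open where $\mc{L}$ is trivial, the transition functions (which generate $\mc{L}$ as an $\struct{X}$-module via gluing) get sent to their $p$-th powers under $F^\#$, which is precisely the transition data for $\mc{L}^p$. Applying $F_*$ and using the projection formula (valid because $F$ is an affine morphism and $\mc{L}$ is locally free), I obtain
$$F_* (\mc{L}^p) \: \cong \: F_* (F^* \mc{L}) \: \cong \: (F_* \struct{X}) \otimes_{\struct{X}} \mc{L} \, .$$

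Next, tensor the given splitting $\varphi : F_* \struct{X} \rightarrow \struct{X}$ with $\mc{L}$ to obtain an $\struct{X}$-linear map $\tilde{\varphi} : F_* (\mc{L}^p) \rightarrow \mc{L}$. On the other hand, the unit of the adjunction $(F^*, F_*)$ gives a canonical $\mb{F}_p$-linear map $\iota : \mc{L} \rightarrow F_* F^* \mc{L} = F_* (\mc{L}^p)$, locally sending a section $s$ to $s \otimes 1$, which corresponds via $F^\#$ to $s^p$ (this is $\mb{F}_p$-linear but not $\struct{X}$-linear). The composition $\tilde{\varphi} \circ \iota$ is the identity on $\mc{L}$ because $\varphi \circ F^\# = \text{id}$ on $\struct{X}$, so $\iota$ is a split $\mb{F}_p$-linear injection.

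Now apply $\cohom{i}{X}{-}$ to the split injection $\iota$. Since $F$ is the identity on the underlying topological space, $\cohom{i}{X}{F_* \mc{F}} = \cohom{i}{X}{\mc{F}}$ for any sheaf $\mc{F}$, so we obtain a split $\mb{F}_p$-linear injection $\cohom{i}{X}{\mc{L}} \hookrightarrow \cohom{i}{X}{\mc{L}^p}$, giving the first assertion. For the second, iterate the injection $n$ times to obtain $\cohom{i}{X}{\mc{L}} \hookrightarrow \cohom{i}{X}{\mc{L}^{p^n}}$ for every $n \geq 0$; if the right-hand side vanishes for some $n \gg 0$, so does the left-hand side.

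The only technical care needed is in distinguishing $\mb{F}_p$-linear from $\struct{X}$-linear maps (the map $F^\#$ and hence $\iota$ is only $\mb{F}_p$-linear, while $\varphi$ and $\tilde{\varphi}$ are $\struct{X}$-linear), but no genuine obstacle arises — the entire argument is a formal consequence of the splitting together with the identification $F^* \mc{L} \cong \mc{L}^p$.
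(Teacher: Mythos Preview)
Your argument is correct and is essentially the standard proof from \cite{BK}; note, however, that the paper itself does not supply a proof of this proposition but simply cites \cite{BK}, Lemma 1.2.7. There is therefore nothing in the paper to compare against, and your write-up matches the argument one finds in the cited reference.
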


\begin{corollary} (\cite{BK}, Theorem 1.2.8) \label{cor:frobvanishing} Assume that $X$ is Frobenius split and that there is a proper morphism from $X$ to an affine variety. Let $\mc{L}$ be an ample invertible sheaf on $X$. Then $\cohom{i}{X}{\mc{L}} = 0$ for all $i > 0$.
\end{corollary}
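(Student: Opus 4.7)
The plan is to reduce the statement to Proposition \ref{pr:frobvanishing} by showing that the higher cohomology of sufficiently large tensor powers of $\mc{L}$ vanishes, after which Frobenius splitting closes the gap. The second assertion in Proposition \ref{pr:frobvanishing} says precisely that whenever $\cohom{i}{X}{\mc{L}^n} = 0$ for all $n \gg 0$, the same vanishing holds for $\mc{L}$ itself. So everything comes down to producing classical Serre-type vanishing for $\mc{L}^n$ when $n$ is large.

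First I would set up the geometric input: let $f : X \to Y$ be the given proper morphism with $Y$ affine, and let $\mc{L}$ be the ample invertible sheaf on $X$. Because $\mc{L}$ is ample on $X$ and $Y$ is affine, $f$ is in fact a projective morphism and $\mc{L}$ is $f$-ample. Then the standard Serre vanishing theorem for projective morphisms yields $R^jf_*\mc{L}^n = 0$ for every $j > 0$ and every $n$ sufficiently large. Since $Y$ is affine, cohomology of quasi-coherent sheaves on $Y$ vanishes in positive degrees, so the Leray spectral sequence collapses and gives $\cohom{i}{X}{\mc{L}^n} \cong \cohom{0}{Y}{R^if_*\mc{L}^n} = 0$ for all $i > 0$ and all $n$ large enough.

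Next I would apply Proposition \ref{pr:frobvanishing}: iterating the injection coming from Frobenius splitting produces a chain
\[ \cohom{i}{X}{\mc{L}} \hookrightarrow \cohom{i}{X}{\mc{L}^{p}} \hookrightarrow \cohom{i}{X}{\mc{L}^{p^2}} \hookrightarrow \cdots \hookrightarrow \cohom{i}{X}{\mc{L}^{p^k}} \hookrightarrow \cdots. \]
Choosing $k$ large enough that $p^k$ exceeds the Serre vanishing bound from the previous step, the right-hand group is zero, forcing $\cohom{i}{X}{\mc{L}} = 0$.

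The only step that requires real care is the Serre vanishing: one must check that properness of $f$ together with ampleness of $\mc{L}$ on the total space really does give vanishing of $R^if_*\mc{L}^n$ in high tensor power (equivalently, that $\mc{L}$ is $f$-ample). Once that standard algebro-geometric fact is in hand, the proof is essentially a one-line reduction to Proposition \ref{pr:frobvanishing}; no further input from the theory of Frobenius splitting is needed beyond the injection already recorded.
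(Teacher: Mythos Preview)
Your argument is correct. The paper itself does not supply a proof of this corollary; it is simply quoted from \cite{BK}, Theorem 1.2.8, and your reduction---Serre vanishing for $\mc{L}^n$ with $n \gg 0$ via the proper morphism to an affine base, followed by the Frobenius-splitting injection of Proposition \ref{pr:frobvanishing}---is exactly the standard proof given there.
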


\subsubsection{Application to cohomology of cotangent bundles of flag varieties}

We now apply Frobenius splitting methods to flag varieties in type $A$.



The following theorem is a result due to van der Kallen and is a strengthening of Theorem 3.8 in \cite{MV}.

\begin{theorem} \label{th:BK} (van der Kallen, \cite{vdK}) Let $p$ be any prime. Set $\mc{G} := SL_n \big( \algclosed{p} \big)$ and let $\mc{X}$ be the full flag variety $\mc{G}/\mc{B}$ of $\mc{G}$. Then the bundles $\mc{G} \times^\mc{B} (\nil{p})_{\bar{p}}$ on $\mc{X}$ are Frobenius split, where $(\nil{p})_{\bar{p}}$ is the nilradical of any standard parabolic subgroup $\mf{p}_{\bar{p}}$ of Lie$(\mc{G})$.
\end{theorem}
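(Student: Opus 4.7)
The plan is to realize each $\mc{G} \times^\mc{B} (\nil{p})_{\bar{p}}$ as a compatibly Frobenius split subscheme of the full cotangent bundle $\mc{G} \times^\mc{B} \mf{n}_{\bar p}$ of $\mc{X}$. Since any compatibly split subscheme of a Frobenius split scheme is itself Frobenius split (see the generalities in Chapter 1 of \cite{BK}), it suffices to carry out two independent tasks: (1) produce a Frobenius splitting of $\mc{G} \times^\mc{B} \mf{n}_{\bar p}$, and (2) verify that this splitting is compatible with each sub-bundle $\mc{G} \times^\mc{B} (\nil{p})_{\bar{p}}$ as $\mf{p}_{\bar p}$ ranges over standard parabolic subalgebras.

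For step (1) I would follow the template of Mehta--van der Kallen \cite{MV}. The standard criterion from \cite{BK} is that a global section $\sigma$ of $\omega_Y^{-(p-1)}$ induces a Frobenius splitting of a smooth variety $Y$ provided that, at some point, $\sigma$ locally equals the $(p-1)$-th power of the product of a regular system of parameters, up to a unit. On the cotangent bundle of $\mc{X}$, a natural candidate is obtained by tensoring a Mehta--Ramanathan-style splitting section of $\mc{X} = \mc{G}/\mc{B}$ (built from root vectors dual to negative simple roots) with a relative top form on the fiber $\mf{n}_{\bar p}$ expressed in a Chevalley basis. That this $\sigma$ is indeed a splitting is essentially the content of Theorem 3.8 of \cite{MV}.

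For step (2), the type-$A$ hypothesis enters decisively. In type $A$ every standard parabolic is block upper-triangular, so the fiber complement $\mf{n}_{\bar p}/(\nil{p})_{\bar p}$ decomposes as a direct sum of root spaces $\mf{g}_\beta$ for $\beta \in \Delta^+ \setminus \Delta^+_L$, and the subvariety $\mc{G} \times^\mc{B} (\nil{p})_{\bar p}$ is cut out of $\mc{G} \times^\mc{B} \mf{n}_{\bar p}$ by the vanishing of precisely these fiber coordinates along the open Bruhat cell. One then applies the standard compatibility criterion of \cite{BK}: if the product of the defining local equations of a reduced subscheme divides $\sigma$ with the right multiplicity, the subscheme is compatibly split. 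Arranging $\sigma$ so that this divisibility holds \emph{simultaneously} for every standard parabolic $\mf{p}_{\bar p}$ is the technical content of the strengthening \cite{vdK} over \cite{MV}.

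The principal obstacle is step (2): outside of type $A$ one cannot generally arrange for the local equations of $(\nil{p})_{\bar p}$ inside $\mf{n}_{\bar p}$ to sit correctly inside a single splitting section, because the $\mc{B}$-module $\mf{n}_{\bar p}/(\nil{p})_{\bar p}$ need not admit a good filtration compatible with the splitting. In type $A$, however, the partition/block combinatorics permit a uniform choice of $\sigma$ that compatibly splits every parabolic sub-bundle at once, and this is precisely what makes the theorem go through. Verifying this uniform compatibility in positive characteristic -- without reducing to characteristic zero -- is the delicate part of the argument.
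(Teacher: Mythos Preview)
The paper does not actually prove this theorem; it is stated as a result of van der Kallen \cite{vdK} (strengthening Theorem~3.8 of \cite{MV}) and invoked without further argument. So there is no ``paper's own proof'' to compare against. Your sketch is in the right spirit and follows the standard Mehta--van der Kallen template that the cited references use, so as an outline of how the external result is obtained it is broadly reasonable.

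That said, there is a concrete slip in your step~(2). You write that the quotient $\mf{n}_{\bar p}/(\nil{p})_{\bar p}$ is spanned by root spaces $\mf{g}_\beta$ for $\beta \in \Delta^+ \setminus \Delta^+_L$. This is backwards: the nilradical $(\nil{p})_{\bar p}$ already has weights $\Delta^+ \setminus \Delta^+_L$, so the quotient $\mf{n}_{\bar p}/(\nil{p})_{\bar p}$ carries the weights in $\Delta^+_L$, i.e.\ the positive roots of the Levi. Consequently the fiber coordinates that must vanish to cut out $\mc{G}\times^{\mc{B}}(\nil{p})_{\bar p}$ inside $\mc{G}\times^{\mc{B}}\mf{n}_{\bar p}$ are indexed by $\Delta^+_L$, not by $\Delta^+\setminus\Delta^+_L$. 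This does not wreck the overall strategy, but it does change which divisors you must arrange to appear in your splitting section~$\sigma$, and hence the verification in type~$A$.

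One further remark: the theorem as stated only asserts that each bundle $\mc{G}\times^{\mc{B}}(\nil{p})_{\bar p}$ is Frobenius split, not that all of them are simultaneously compatibly split inside a single ambient variety. Your step~(2) aims for the stronger simultaneous statement, which is fine (and is closer to what \cite{vdK} actually proves), but you should be aware that for the application in this paper the weaker statement already suffices.
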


We now come to the main result, which is a modification of Theorem 5.2.11 in \cite{BK} (see also \cite{KLT}).

\begin{theorem} Let $G$ be a semisimple algebraic group over $\C$ with all components of type $A$. Let $P$ be a standard parabolic subgroup of $G$. Then $$\pc{i}{\lambda}{p} = 0$$ for all regular $\lambda \in D$ and $i > 0$. Hence $$\pcotcoh{\lambda}{P} = 0$$ for all $i > 0$ and regular $\lambda \in D$ (recall that $p_P : \pcot{P} \rightarrow G/P$ is the bundle map).
\end{theorem}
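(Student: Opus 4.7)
The plan is to work in positive characteristic, apply van der Kallen's Frobenius splitting in conjunction with relative Serre vanishing, and descend to characteristic $0$ by standard semicontinuity. First I would note that the $\mb{G}_m$-action on $\nil{p}$ by scalars grades the cohomology: the $n$-th graded piece of $\pc{i}{\lambda}{p}$ is $\Cohom{i}{G/B}{\eb{\lambda}{B} \otimes \eB{\symmpg{p}{n}}{B}}$, which is finite-dimensional. Using the Chevalley $\mb{Z}$-form of $G/B$ and of $\nil{p}$, these finite-dimensional cohomology groups fit into a flat family over $\spec \mb{Z}$, and upper semicontinuity therefore reduces the problem to showing the analogous vanishing over $\algclosed{p}$ for all $p \gg 0$.

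So let $\mc{G}$ be the split reductive group of the same type over $\algclosed{p}$, with Borel $\mc{B}$ and parabolic $\mc{P}$, and set $X_p := \mc{G} \times^{\mc{B}} (\nil{p})_{\bar{p}}$. By Theorem \ref{th:BK}, $X_p$ is Frobenius split. Next I would introduce the moment map $\mu_P : X_p \to \mf{g}$ sending $g * Y \mapsto \textrm{Ad}(g)Y$. Together with the bundle projection $r_P : X_p \to \mc{G}/\mc{B}$, the pair $(r_P, \mu_P)$ provides a closed embedding
$$X_p \hookrightarrow \mc{G}/\mc{B} \times \mf{g}, \qquad g * Y \mapsto (g\mc{B}, \textrm{Ad}(g)Y),$$
realizing $X_p$ as the incidence variety $\{(g\mc{B}, Z) : Z \in \textrm{Ad}(g)\,(\nil{p})_{\bar{p}}\}$. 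Projection to the second factor now shows that $\mu_P$ is proper onto its image, which is a closed (hence affine) subvariety of $\mf{g}$.

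For regular dominant $\lambda$ the line bundle $\mathcal{L}_{\mc{B}}(\lambda)$ is ample on $\mc{G}/\mc{B}$, and its pullback $\mc{L} := r_P^* \, \mathcal{L}_{\mc{B}}(\lambda)$ is $\mu_P$-ample: by the description above, each fiber of $\mu_P$ embeds via $r_P$ as a closed subvariety of $\mc{G}/\mc{B}$, so the restriction of $\mc{L}$ to any such fiber is the restriction of an ample line bundle to a projective variety. Relative Serre vanishing then yields $R^i \mu_{P*}(\mc{L}^n) = 0$ for $i > 0$ and $n \gg 0$, and combined with the affineness of the image of $\mu_P$ via the Leray spectral sequence this gives $\Cohom{i}{X_p}{\mc{L}^n} = 0$ for $i > 0$ and $n \gg 0$. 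Proposition \ref{pr:frobvanishing} then bootstraps this via Frobenius splitting to $\Cohom{i}{X_p}{\mc{L}} = 0$ for all $i > 0$, which is the desired vanishing in characteristic $p$.

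The main obstacle I anticipate is the semicontinuity reduction: while both $G/B$ and the relevant line bundles are defined integrally via the Chevalley scheme, one must track the grading and the $B$-module structure on $\nil{p}$ carefully through the integral form and argue graded piece by graded piece, since the total space $G \times^B \nil{p}$ is not proper and its cohomology is only finite-dimensional after this decomposition. Once that bookkeeping is set up, the heart of the argument is a clean two-step combination: a proper map to an affine variety turns relative Serre vanishing into absolute vanishing for large tensor powers, and Frobenius splitting then descends this vanishing to the first power.
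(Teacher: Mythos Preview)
Your proposal is correct and follows essentially the same route as the paper: embed $G \times^B \nil{p}$ as a closed subscheme of $G/B \times \mf{g}$, use ampleness of the pullback of $\eb{\lambda}{B}$ together with the proper moment map to the affine $\mf{g}$ and van der Kallen's Frobenius splitting to get vanishing in positive characteristic, then pass to $\C$ by semicontinuity. The only cosmetic differences are that the paper observes directly that $r_P^*\eb{\lambda}{B}$ is (absolutely) ample as the restriction of an ample bundle on $G/B \times \mf{g}$ and then invokes Corollary~\ref{cor:frobvanishing} as a black box, whereas you reprove that corollary inline via relative Serre vanishing plus the Frobenius bootstrap, and you spell out the graded semicontinuity step that the paper delegates to \cite{BK}, \S1.6.
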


\begin{proof} For the moment, assume that $G$ is a semisimple algebraic group over $\algclosed{p}$ with all components of type $A$. Consider the inclusion $$G \times^B \nil{p} \hookrightarrow G \times^B \mf{g} \, \cong \, G/B \times \mf{g} \, .$$ Let $q : G/B \times \mf{g} \rightarrow G/B$ be projection onto the first coordinate. Since $\mf{g}$ is affine, the bundle $q^* \eb{\lambda}{B}$ is ample on $G/B \times \mf{g}$, since $\eb{\lambda}{B}$ is ample on $G/B$. Note that the restriction of $q^* \eb{\lambda}{B}$ to $G \times^B \nil{p}$ is $p_P^* \, \eb{\lambda}{B}$; hence $p_\mf{p}^* \, \eb{\lambda}{B}$ is ample on $G \times^B \nil{p}$.

Note that there is a proper morphism $ G \times^B \nil{p} \rightarrow \mf{g} $ given by $$G \times^B \nil{p} \, \hookrightarrow \, G/B \times \mf{g} \, \twoheadrightarrow \, \mf{g} \, .$$ By Corollary \ref{cor:frobvanishing} and Theorem \ref{th:BK}, we have that $$\Cohom{i}{  G \times^B \nil{p}  }{  p_P^* \, \eb{\lambda}{B}  } = 0$$ for all $i > 0$. The result in characteristic 0 now follows from base change, cf \cite{BK}, section 1.6.
\end{proof}

\section{Examples} \label{sec:examples}

In this section we explicitly compute some examples to illustrate Theorem \ref{th:brygen}. All three examples will be generalized BK-filtrations on $L$-highest weight subspaces of $\irrepgw{0}{\mu}$ for some $\mu \in D$; we will always consider the 0 weight space since it doesn't make the examples any more interesting to consider other weight spaces.

\begin{definition} For $\alpha \in \pi$ let $\chi_\alpha$ (resp. $\chi_\alpha^\vee$) denote the fundamental weight (resp. fundamental coweight) corresponding to $\alpha$. For an enumeration $\{  \alpha_1, \ldots, \alpha_n  \}$ of $\pi$ let $\chi_i$ (resp. $\chi_i^\vee$) denote the fundamental weight (resp. fundamental coweight) corresponding to $\alpha_i$.
\end{definition}

\subsection{Example 1}

In type $A$, there is a bijective correspondence between partitions $\part = [p_1, \ldots, p_k]$ of $n$ where the $p_i$ are nonzero and nonincreasing, and nilpotent orbits in $\nc$. Orbit representatives for these partitions are constructed as follows.

If the partition is such that $p_i = 1$ for all $1 \leq i \leq k$ then the associated orbit is the 0 orbit, so we assume that $\part \neq [1, \ldots, 1]$. For any $\beta \in \Delta^+$ let $X_\beta \in \mf{g}_\beta$ be a Chevalley basis element. Let $\{   \alpha_1, \ldots, \alpha_{n-1}   \}$ denote the simple roots from left to right in the Dynkin diagram of type $A_{n-1}$.

Given a partition $\part$ as above, set $m =\textrm{max} \{  j : p_j > 1 \}$. For each $1 \leq i \leq m$ set $N_i := \displaystyle \sum_{l = 1}^i p_{l-1}$ (where we set $p_{-1} = 0$) and set $$X_i := \displaystyle \sum_{j = N_i + 1}^{N_{i + 1} - 1} X_{\alpha_j} \, .$$ We now obtain an orbit representative corresponding to $\part$ by setting $$X_{\part} := \sum_{i = 1}^m X_ i \, .$$

We may explicitly describe the matrix corresponding to $X_\part$ as follows. For any $q > 0$ let $J(q)$ be the $q \times q$ matrix with $1's$ on the superdiagonal and $0's$ everywhere else. Then $X_\part$ is the matrix with Jordan blocks of size $p_1, \ldots, p_k$, i.e. the block diagonal matrix $$ \textrm{diag} \big(J(1), \ldots, J(q) \big) :=
\begin{pmatrix}
J(p_1)    &  & &  \\
      	&  J(p_2) & & \\
	& & \ddots & \\
	& & & J(p_k) \\
\end{pmatrix} \, .$$

The bijection between partitions and orbits is given by $\part \mapsto \orb{X_{\part}}$. Even nilpotent orbits correspond to partitions consisting of only even or only odd parts.

Let $G$ be of type $A_3$. There are 5 partitions of $n = 4$ and hence 5 nilpotent orbits in $G$. All but one are even; the non-even orbit corresponds to the partition $[2, 1, 1]
$. Let $\pi = \{  \alpha_1, \alpha_2, \alpha_3  \}$ be the standard ordering of the simple roots.

Consider the nilpotent orbit corresponding to the partition $[3, 1]$. By the construction above we see that $X := X_{\alpha_1} + X_{\alpha_2}$ is an orbit representative. However, one may check (cf \cite{CM}) that a semisimple element $H$ in an $sl_2$ triple containing $X$ is given by $H = 2 \chi_1^\vee + 2 \chi_2^\vee - 2 \chi_3^\vee$. As $H$ is not dominant we now conjugate the triple to obtain a representative of our nilpotent orbit in good position.

Choose a representative $\dot{r}_{\alpha_3} \in N(T)$ of $\r{3} \in W$. We may choose $\dot{r}_{\alpha_3}$ so that $\dot{r}_{\alpha_3} (X) = X_{\alpha_1} + X_{\alpha_2 + \alpha_3}$. Then $H' := \dot{r}_{\alpha_3}(H) = 2 \chi_1^\vee + 2 \chi_3^\vee$. Set $Z := \dot{r}_{\alpha_3} (X)$; then $Z$ is in good position.

Now, considering $H'$, we see that the standard parabolic $P$ corresponding to $Z$ is the parabolic corresponding to $\{ \alpha_2 \} \subseteq \pi$. Explicitly, on the Lie algebra level, we have that the Levi of $\mf{p} = \textrm{Lie}(P)$ is $$\mf{l} = \mf{h} \oplus \mf{g}_{\alpha_2} \oplus \mf{g}_{-\alpha_2}$$ and the nilradical of $\mf{p}$ is the $\mf{b}$-subalgebra of $\mf{n}$ with weights $$\{ \alpha_1, \, \alpha_3, \, \alpha_1 + \alpha_2, \, \alpha_2 + \alpha_3, \, 2 \rho = \alpha_1 + \alpha_2 + \alpha_3 \} \, .$$

Now set $\mu := \chi_2 + 2 \chi_3 = \alpha_1 + 2 \alpha_2 + 2 \alpha_3 \in D$ and consider the weight-0 subspace $\Lhi{0}{\mu}{P} \subseteq \irrepgw{0}{\mu} \subseteq \irrepg{\mu}$. Recall that we have the tangent bundle $p: \pcot{P} \rightarrow G/P$ of $G/P$. By Theorem \ref{th:broer} we have $$\pcotcoh{ \chara{0} }{P} \, \cong \, \cohom{i}{\pcot{P}}{  \struct{\pcot{P}}  } = 0$$ for all $i > 0$. Thus, by Theorem \ref{th:brygen}, the jump polynomial for the generalized BK-filtration on $\Lhi{0}{\mu}{P}$ is given by the $P$-analog Kazhdan-Lusztig polynomial $\qana{\mu}{0}$.



Note that $\pkos(\lambda) = 0$ whenever $\lambda$ is not in the $\mb{Z}^+$-span of $\Delta^+ \setminus \Delta^+_L$ (the weights of $\nil{p}$). A quick computation verifies that the only $W  \: *$-translates of $\mu$ in this span are $\mu$, $\r{1} * \mu = 2 \alpha_2 + 2 \alpha_3$, and $\r{2} * \mu = \alpha_1 + 2 \alpha_3$.

One now checks that $\pkos(\mu) = 2 q^2 + q^3$, $\, \pkos(\r{1} * \mu) = q^2$, and $\pkos(\r{2} * \mu) = q^3$. Thus $$\qana{\mu}{0} = \pkos(\mu) - \pkos(\r{1}*\mu) - \pkos(\r{2}*\mu) = q^2 \, .$$ This says that $\Lhi{0}{\mu}{P}$ is 1-dimensional and that for any nonzero $v \in \Lhi{0}{\mu}{P}$ we have $Z^3 . v = 0$ and $Z^2 . v \neq 0$.

\subsection{Example 2}

Let $G$ be of type $G_2$. Let $\alpha$ denote the short simple root and $\beta$ the long simple root. 

For any semisimple Lie algebra $\mf{g}$ there is a unique dense orbit in $\nc \setminus \orb{reg} \, $, where $\orb{reg}$ is the regular nilpotent orbit (cf \cite{CM}). This orbit is called the \textbf{subregular} nilpotent orbit. By Example 8.2.13 in \cite{CM}, the subregular orbit in $G$ is even and the corresponding weighted Dynkin diagram has 2 on the node corresponding to $\alpha$ and 0 on the node corresponding to $\beta$. Set $H := 2 \chi_\alpha^\vee \in \mf{h}$; then there is an $sl_2$-triple $\{   X, Y, H  \}$ with $X \in \orb{subreg} \, \cap \, \nil{p}$, and $X$ is clearly in good position. 

The corresponding parabolic $P$ is the minimal parabolic corresponding to the long simple root $\beta$ with Levi factor and unipotent radical (on the Lie algebra level) given by $$\mf{l} = \mf{h} \oplus \mf{g}_{\beta} \oplus \mf{g}_{-\beta}$$ and $$  \nil{p} = \bigoplus_{ \beta \in \Delta^+ \setminus \Delta^+_L } \mf{g}_\beta  \, ,$$ respectively, where $$\Delta^+ \setminus \Delta^+_L = \{   \alpha, \, \alpha + \beta, \, 2 \alpha + \beta, 3 \alpha + \beta, \, 3 \alpha + 2 \beta   \} \, .$$ 

Consider the adjoint representation $V := \irrepg{\mu}$ of $G$ where $$\mu = 3\alpha + 2\beta = \chi_\beta \, . $$ We compute the generalized BK-filtration on the subspace $\Lhi{0}{\mu}{P} \subseteq \irrepgw{0}{\mu} \subseteq \irrepg{\mu}$. This means we should compute $\qana{\mu}{0}$. Hence we must compute $\pkos(w*\mu - 0) = \pkos(w*\mu)$ for each $w \in W$. One verifies easily that for $w \in W$, $w*\mu \in \mb{Z}^+ (\Delta^+ \setminus \Delta^+_L)$ iff $w \in \{   e, r_\alpha, r_\beta   \}$.

One now checks that $\pkos(e * \mu) = q + q^2 + q^3 \, ,$ $\pkos(r_\alpha * \mu) = q^2 \, ,$ and $\pkos(r_\beta * \mu) = q^3$. Hence we obtain $$\qana{\mu}{0} = q + q^2 + q^3 - q^2 - q^3= q \, .$$

\newpage


\bibliography{thebibliography}
\end{document}